\documentclass[11pt]{amsart}

\usepackage[T1]{fontenc}

\usepackage[letterpaper,margin=1in]{geometry}
\usepackage{amssymb}
\usepackage{enumitem}
\usepackage{mathtools}
\usepackage{microtype}
\usepackage{tikz}
\usetikzlibrary{matrix,arrows}
\usepackage{tikz-3dplot}
\usepackage{verbatim}

\usepackage[colorlinks,
pagebackref=true,
pdftitle={Homology of Vietoris-Rips complexes of hypercube graphs via group actions},
pdfauthor={Federico Galetto, Jonathan Montaño, Zoe Wellner}
]{hyperref}

\usepackage{cleveref}

\theoremstyle{definition}
\newtheorem{definition}{Definition}[section]
\newtheorem{lemma}[definition]{Lemma}
\newtheorem{proposition}[definition]{Proposition}
\newtheorem{theorem}[definition]{Theorem}
\newtheorem{example}[definition]{Example}
\newtheorem{remark}[definition]{Remark}

\newtheorem{problem}[definition]{Problem}
\newtheorem*{headthm}{Main Theorem}

\numberwithin{equation}{section}

\renewcommand*\backref[1]{\ifx#1\relax \else (Cited on p.~#1) \fi}

\begin{document}

\title{Homology of Vietoris-Rips complexes of hypercube graphs via group actions}

\author{Federico Galetto}
\address{Federico Galetto\\ Department of Mathematics and Statistics, Cleveland State University, 
  2121 Euclid Avenue, RT 1515, Cleveland, OH 44115-2215, USA.}
\email{\href{mailto:f.galetto@csuohio.edu}{f.galetto@csuohio.edu}}
\urladdr{\href{https://math.galetto.org}{math.galetto.org}}

\author{Jonathan Montaño}
\address{Jonathan
  Montaño\\School of Mathematical and Statistical Sciences, Arizona
  State University, P.O. Box 871804, Tempe, AZ 85287-18041, USA.}
\email{\href{mailto:montano@asu.edu}{montano@asu.edu}}
\urladdr{\href{https://j-montano.github.io}{j-montano.github.io}}

\author{Zoe Wellner}
\address{Zoe Wellner\\School of
  Mathematical and Statistical Sciences, Arizona State University,
  P.O. Box 871804, Tempe, AZ 85287-18041, USA.}
\email{\href{mailto:zwellner@asu.edu}{zwellner@asu.edu}}
\urladdr{\href{https://zoewellner.com}{zoewellner.com}}

\begin{abstract}
  The Vietoris-Rips complex of a metric space is the simplicial
  complex whose faces are the subsets of points with pairwise distance
  bounded above by a given scale $r$. In this paper, we study
  Vietoris-Rips complexes on the vertex set of the $n$-dimensional
  hypercube equipped with the Hamming distance. These complexes are
  stable under the action of the automorphism group of the hypercube
  graph, also known as the hyperoctahedral group, which therefore acts
  on their homology groups. Our results completely describe the
  decomposition of these homology groups into irreducible
  representations of the hyperoctahedral group at scales
  $r\leqslant 3$ and $r=n-1$.
\end{abstract}

\keywords{hypercube graph, Hamming distance, Vietoris-Rips complex,
  homology, hyperoctahedral group}

\subjclass[2020]{Primary: 05E45, 05E18. Secondary: 55N31, 55T25,
  55U10.}

\maketitle

\section{Introduction}

A common technique in topological data analysis is to consider a data
set $X$ in a metric space with distance $d$ and attempt to recover
topological information about an underlying geometric object, often
modeled by a manifold $M$. To this end, for each
$r\in \mathbb{R}_{\geqslant 0}$ one considers the simplicial complex
\[
  \operatorname{VR}(X;r)=\{Y\subseteq X \mid d(x,y)\leqslant r \text{
    for all } x,y\in Y\},
\]
known as the \emph{Vietoris-Rips complex of $X$ at scale $r$}.
Considering the collection of these complexes over all values of $r$
gives rise to the notion of \emph{persistence}. The guiding
principle is that topological features that persist over a large range
of values of $r$ are more likely to reflect meaningful geometric
structure in the underlying data rather than noise.

Vietoris-Rips complexes, together with their connections to persistent
homology, have been extensively studied in a variety of settings,
including planar point sets \cite{zbMATH05723613}, geodesic spaces
\cite{zbMATH07037772,zbMATH07184351}, and equivariant spaces
\cite{zbMATH07729434}. More recently,
\cite{zbMATH07553291,zbMATH07819241,zbMATH07725058,zbMATH08181349}
studied the case in which $X=X^n$ is the vertex set of the
$n$-dimensional hypercube $Q_n$ equipped with the \emph{Hamming
  distance}, where the distance between two vertices, represented by
binary sequences of length $n$, is the number of coordinates in which
they differ. In these works, the authors determined the homotopy type
of the complexes $X^{n,r}:=\operatorname{VR}(X^n;r)$ for
$r\leqslant 3$ and $r=n-1$, and provided lower bounds for their Betti
numbers in the general case. We observe that when $r\geqslant n$, the
complex $X^{n,r}$ is contractible. As shown in \cite[Proposition
3.3]{zbMATH07819241}, the inclusion
$\operatorname{VR}(X^n;r)\hookrightarrow \operatorname{VR}(X^n;r+1)$
induces trivial maps in homology, so questions about persistence boil
down to the study of the individual homology groups.

In \cite[Question 8.7]{zbMATH07819241}, Adams and Virk asked how the
\emph{hyperoctahedral group} $\mathfrak{H}_n$, the automorphism group
of $Q_n$, acts on the homology of the complexes $X^{n,r}$. The main
goal of this paper is to address this question. Our results completely
describe the decomposition into irreducible representations of the
homology groups of $X^{n,r}$ for $r\leqslant 3$ and $r=n-1$. Our
approach combines methods from representation theory, algebraic
topology, and commutative algebra, together with extensive computer
calculations. It is worth noting that we do not make use of previous
characterizations of the homotopy type of $X^{n,r}$. In fact, our
methods provide an independent way to obtain the Betti numbers of
$X^{n,r}$ that may further generalize to other scales.

The action of $\mathfrak{H}_n$ on the hypercube has a long history in
enumerative and representation-theoretic problems. For example, Pólya
\cite{zbMATH03038319} studied this action in his computation of the
number of distinguishable Boolean functions on $n \leqslant 4$ Boolean
variables, a problem later connected by Shannon \cite{MR29860} to
switching circuits and resolved by Slepian \cite{zbMATH03081690} using
the representation theory of $\mathfrak{H}_n$; see also
\cite{zbMATH03193728}. In this paper, we exploit this
representation-theoretic perspective in a topological setting by
studying the action of $\mathfrak{H}_n$ on the homology of the
complexes $X^{n,r}$.

Throughout the paper, we work over an arbitrary field $\Bbbk$ of
characteristic zero. All homology groups have coefficients in $\Bbbk$,
and we omit $\Bbbk$ from the notation for homology, writing simply
$H_\bullet (X^{n,r})$ instead of $H_\bullet (X^{n,r}; \Bbbk)$.  In
characteristic zero, the irreducible representations of
$\mathfrak{H}_n$ are in bijection with bipartitions of $n$, that is,
pairs $(\lambda;\mu)$ where $\lambda$ is a partition of $n_1$ and
$\mu$ is a partition of $n_2$ such that $n_1+n_2=n$ \cite[Corollary
II.4]{zbMATH03590472}. We denote by $\{\lambda;\mu\}$ the irreducible
representation indexed by the bipartition $(\lambda;\mu)$; see
\Cref{sec:constr-dimens} for a brief summary of its construction.  The
parts $\lambda_i$ of a partition $\lambda$ are written in
non-increasing order. When a part $\lambda_i = m$ is repeated $n$
times, we write $m^n$ for a more compact notation.

\begin{headthm}[\Cref{thm:main0,thm:main1,thm:1,thm:2,thm:mainr-1}]
  For $n>r$, the following are the only nonzero homology groups of
  $X^{n,r}$. Moreover, the corresponding isomorphisms hold as
  representations of $\mathfrak{H}_n$.

  \begin{enumerate}[label=(\arabic*)]
  \item Scale $r=0$.
    \[
      H_0(X^{n,0}) \cong \bigoplus_{i=0}^n \{n-i;i\}.
    \]
  \item Scale $r=1$.
    \[
      H_0(X^{n,1}) \cong \{n;0\},
      \qquad
      H_1(X^{n,1}) \cong \bigoplus_{i=2}^n \{n-i;(i-1,1)\}.
    \]
  \item Scale $r=2$.
    \begin{gather*}
      H_0(X^{n,2}) \cong \{n;0\},\\
      H_3(X^{n,2}) \cong
      \left(
      \bigoplus_{j=1}^{n-3} \{n-3-j;(j,1^3)\}
      \right)
      \oplus
      \left(
      \bigoplus_{j=0}^{1}\;
      \bigoplus_{i=0}^{n-3-j}
      \{(n-2-i-j,1^{2+j});i\}
      \right).
    \end{gather*}
  \item Scale $r=3$.
    \begin{gather*}
      H_0(X^{n,3}) \cong \{n;0\},
      \qquad
      H_4(X^{n,3})
      \cong
      \bigoplus_{j=1}^{n-4}
      \{n-4-j;(j,1^4)\},\\
      H_7(X^{n,3})
      \cong
      \bigoplus_{i=0}^{n-4}
      \bigoplus_{j=0}^{\min\{4,n-4-i\}}
      \{(n-i-j,j);i\}.
    \end{gather*}
  \item Scale $r=n-1$, with $n\geqslant 4$.
    \[
      H_0(X^{n,n-1})
      \cong
      H_{2^{n-1}-1}(X^{n,n-1})
      \cong
      \{n;0\}.
    \]
  \end{enumerate}
\end{headthm}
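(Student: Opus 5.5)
Since all homology here is over a field of characteristic zero and the complexes are finite, the plan is to compute everything at the level of $\mathfrak{H}_n$-characters, that is, in the representation ring. The equivariant Euler characteristic
\[
  \sum_i (-1)^i [C_i(X^{n,r})] = \sum_i (-1)^i [H_i(X^{n,r})]
\]
holds in the representation ring. Each chain group $C_i$ is a permutation representation (up to orientation signs), namely a sum of inductions from stabilizers of $i$-faces.

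The scales $r=0,1$ can be handled directly. At $r=0$, $H_0=\Bbbk X^n$ is the permutation module on $\{0,1\}^n=\operatorname{Ind}_{\mathfrak S_2\wr\mathfrak S_n}$. Decomposing by Hamming weight and using the characters $\{n-i;i\}$ gives the stated sum. At $r=1$ the complex is the graph $Q_n$, which is connected, so $H_0=\{n;0\}$. Then
\[
  [H_1] = [C_1]-[C_0]+[\{n;0\}],
\]
where $C_1$ is the edge module induced from the stabilizer of an edge, twisted by the orientation sign. The stated formula then follows from the Pieri/Littlewood–Richardson rules for $\mathfrak H_n$ (induction from $\mathfrak H_{n-1}\times\mathfrak H_1$).

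For $r=2,3$ the Euler characteristic alone does not separate degrees, so I first need to know which $H_i$ vanish. I would get this with a filtration, or a Mayer–Vietoris / long exact sequence argument, that splits $X^{n,r}$ along the last coordinate as two copies of $X^{n-1,r}$ glued via links. Equivalently, I would use a spectral sequence of the $\mathfrak H_{n-1}$-equivariant decomposition, which is presumably the reason for 55T25 in the paper's subject classification. This gives an induction on $n$ that is equivariant for $\mathfrak H_{n-1}\times\mathfrak H_1$. I will assume that the nonzero degrees are only $0,3$ (for $r=2$) and $0,4,7$ (for $r=3$) as in the statement. This can be verified independently by the recursion, with base cases checked by computer.

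With only one nontrivial positive degree ($r=2$), the Euler characteristic determines $H_3$. For $r=3$ there are two unknowns, $H_4$ and $H_7$, so I need an extra constraint. I would obtain it from restriction to $\mathfrak H_{n-1}$ combined with the recursion: the long exact sequence expresses $\operatorname{Res} H_i(X^{n,r})$ in terms of $H_\bullet(X^{n-1,r})$ and the homology of the gluing subcomplex. The branching rule $\{\lambda;\mu\}\downarrow=\sum\{\lambda^-;\mu\}+\{\lambda;\mu^-\}$ then pins down the $\mathfrak H_n$-character, since a character is determined by its restriction together with its Euler characteristic contribution in a separated degree. This is the step I expect to be the main obstacle. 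Controlling the connecting maps equivariantly, and proving the vanishing range rather than assuming it, needs the most care. I would first try the commutative-algebra route: encode the faces via a Stanley–Reisner or cellular chain complex graded by weight, and show concentration degree by degree.

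For $r=n-1$, two vertices are non-adjacent only if they are antipodal. So $X^{n,n-1}$ is the boundary of the cross-polytope on $2^{n-1}$ antipodal pairs, a sphere $S^{2^{n-1}-1}$, giving $H_0$ and the top homology. The group acts on the fundamental class by the sign of its permutation action on the $2^{n-1}$ antipodal pairs, times $(-1)$ for each pair swapped. For $n\geqslant4$ I would check this sign on the generators: coordinate transpositions and a single coordinate flip, which swaps all $2^{n-1}$ pairs. Each contributes an even number of transpositions and swaps, so the class carries the trivial representation $\{n;0\}$.
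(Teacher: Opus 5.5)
\textbf{Gap 1: the vanishing of the other homology groups is assumed.} Parts (3) and (4) assert that the listed groups are the \emph{only} nonzero homology of $X^{n,2}$ and $X^{n,3}$, and you simply assume this concentration. The recursion you sketch does not supply it. The full subcomplexes on $\{u_n=0\}$ and $\{u_n=1\}$ are disjoint copies of $X^{n-1,r}$, and every simplex with vertices on both sides lies in neither. So any Mayer--Vietoris or link-by-link recursion must compute the homology contributed by these crossing simplices, which is the entire difficulty. You neither identify that gluing complex nor explain why its homology is concentrated.

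In the paper this is where the work goes. The cubic-dimension filtration gives a spectral sequence whose $E^1_{p,q}$ is induced from $\mathfrak{S}_{n-p}\times\mathfrak{H}_p$ acting on the homology of $C^{p,r}_{\bullet,p}$, the complex of simplices of cubic dimension exactly $p$ in $X^{p,r}$. Only finitely many small complexes then need to be understood:
\begin{itemize}
\item for $r=2$, by hand, with a lemma showing $p\geqslant 4$ contributes nothing;
\item for $r=3$ and $p\leqslant 7$, by computer;
\item for $r=3$ and $p=8,9$, by a structural lemma plus mod-$2$ rank bounds;
\item the remaining high degrees at $r=3$, by citing a known vanishing result.
\end{itemize}
Even for $r=2$, once concentration is granted, ``the Euler characteristic determines $H_3$'' still requires computing $\sum_i(-1)^i[C_i(X^{n,2})]$ in the representation ring. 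That means every orbit type of face (including, e.g., the even-weight tetrahedra in $3$-subcubes), with its stabilizer and orientation character, and then decomposing the result into the stated irreducibles. None of that is carried out.

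\textbf{Gap 2: separating $H_4$ from $H_7$ at $r=3$.} The Euler characteristic only gives $[H_4]-[H_7]$, and the principle you invoke to split it is not valid. Restriction to $\mathfrak{H}_{n-1}$ only records character values on classes $(\alpha;\beta)$ where $\alpha$ has a part equal to $1$; for instance, it sees nothing at the central element $(1^n,\operatorname{id})$. So restriction has a nonzero kernel on virtual characters, and knowing the restrictions together with $[H_4]-[H_7]$ does not in general determine $[H_4]$. Getting those restrictions from a long exact sequence would also require control of connecting maps that you leave open. The Euler characteristic alone would suffice if you could show a priori that $H_4$ and $H_7$ share no irreducible constituent, but nothing in the proposal gives that.

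In the paper the separation comes directly from the spectral sequence:
\begin{itemize}
\item $H_4\cong E^2_{4,0}$ is the top homology of the $q=0$ row, once that row is identified with the truncated cellular chain complex of the $n$-cube;
\item $H_7\cong E^2_{4,3}$ sits in a different row and is induced from $H_7(C^{4,3}_{\bullet,4})\cong\{4;0\}$.
\end{itemize}

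\textbf{The remaining parts are fine.} Scales $0$ and $1$ follow the paper; note the vertex stabilizer is $\mathfrak{S}_n$, not $\mathfrak{S}_2\wr\mathfrak{S}_n=\mathfrak{H}_n$. For $r=n-1$, viewing $X^{n,n-1}$ as the join of $2^{n-1}$ copies of $S^0$ and reading off the sign on the fundamental class is correct. It is a more elementary alternative to the paper's Hochster--Koszul argument. One correction to your count: a coordinate flip fixes no antipodal pair and permutes the pairs in $2^{n-2}$ transpositions, rather than swapping within pairs. A coordinate transposition permutes them in $2^{n-3}$ transpositions. Both numbers are even for $n\geqslant 4$, so your conclusion stands.
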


Adams and Virk's original question may be interpreted in different
ways. At a more basic level, one may simply try to obtain a basis for
$H_\bullet (X^{n,r})$ and explain how $\mathfrak{H}_n$ acts on this
basis. This kind of description is built into our process and appears
typically in the form of induced representations from subgroups of
$\mathfrak{H}_n$ that fix certain homology generators up to
orientation. From there, one can abstract to a ``coordinate-free''
description such as finding the character of $\mathfrak{H}_n$ on
$H_\bullet (X^{n,r})$ (see, for example \cite[Section
1.8]{zbMATH01601795}). Decomposing into irreducible representations is
the next step, and a task that is not always
manageable. Interestingly, our results show that at scales
$r\leqslant 3$ and $r=n-1$ the representations $H_\bullet (X^{n,r})$
are \emph{multiplicity-free}, meaning that no irreducible
representation of $\mathfrak{H}_n$ appears as a direct summand more
than once.

The cases of scale $r=0$ and $r=1$, discussed in
\Cref{sec:scale-zero,sec:scale-one}, follow by directly analyzing the
action of $\mathfrak{H}_n$ on the chain complex of $X^{n,r}$. Although
these cases are easily understood from the point of view of algebraic
topology, we take them as an opportunity to introduce some concepts of
representation theory that may be less familiar to the reader.  In
\Cref{sec:suppl-filtr}, we introduce a filtration of the complex
$X^{n,r}$ whose associated spectral sequence is our main tool to
compute homology at higher scales. This filtration is designed to play
well with the $\mathfrak{H}_n$-action, and it allows us to take
advantage of the highly symmetric structure of $X^{n,r}$ to reduce
several computations to smaller cases. These observations are applied
in \Cref{sec:scale-two,sec:scale-three}, where we present the cases of
scale $r=2$ and $r=3$, respectively. We note that the scale three case
relies on purpose-built software solutions; our code is described in
\Cref{sec:comp-meth} and is available in our Github repository at
\url{https://github.com/galettof/VietorisRipsHypercube}. The case
$r=n-1$, which we refer to as the ``submaximal'' scale, is treated in
\Cref{sec:submaximal-scale}. We settle this case using tools of
commutative algebra, namely Hochster's Formula (see \cite[Theorem
5.5.1]{zbMATH01194481}) together with an explicit free resolution of
the Stanley-Reisner ideal of the simplicial complex $X^{n,n-1}$. We
conclude with some open problems in \Cref{sec:open-problems}.

\subsection*{Acknowledgments}

The authors are grateful for helpful discussions with many
mathematicians, including Henry Adams, Alessio D'Alì, Graham Denham,
Sara Faridi, Ziqin Feng, Ewgenij Gawrilow, Selvi Kara, Allen Knutson,
Gregory Lupton, Susan Morey, Claudiu Raicu, and Žiga Virk.

The first author was partially funded by NSF Grant DMS \#2200844. The
second author was partially funded by NSF Grant DMS \#2401522. The
Viking Cluster at Cleveland State University was used to perform many
of the initial computations that led to the results of this paper.

Part of the research in this article was carried out at the Fields
Institute for Research in Mathematical Sciences while the first author
was in residence during the Thematic Program in Commutative Algebra
and Applications. The authors thank the institute and the program's
organizers for their hospitality and for providing excellent
conditions for conducting research.

\section{Scale zero}
\label{sec:scale-zero}

The complex $X^{n,0}$ consists of $2^n$ disconnected points
corresponding to the vertices of the hypercube $Q_n$. As such,
$H_0 (X^{n,0})$ is a $\Bbbk$-vector space of dimension $2^n$.

\subsection{Chains as a representation}
\label{sec:chains-as-repr}

The vertices of $Q_n$ can be represented by binary sequences of length
$n$. We identify such sequences $u=(u_1,\dots,u_n)$ with the elements
of the group $\mathbb{Z}_2^n$. The $0$-dimensional simplicial chains
in $X^{n,0}$ are the singletons $[u]$ with $u\in \mathbb{Z}_2^n$.

The hyperoctahedral group $\mathfrak{H}_n$, also known as the Weyl
group of type B, can be realized as the semidirect product
$\mathbb{Z}_2^n \rtimes \mathfrak{S}_n$, with
$\sigma \in \mathfrak{S}_n$ acting on $u \in \mathbb{Z}_2^n$ by
\begin{equation*}
  \sigma (u_1,\dots,u_n) = (u_{\sigma^{-1} (1)}, \dots, u_{\sigma^{-1} (n)});
\end{equation*}
see \cite[Section 1.1]{zbMATH00047598}, as well as \cite[Section
5.5]{zbMATH01970438} for the basics of semidirect products.  The
operation in $\mathfrak{H}_n$ is given by
\begin{equation*}
  (s, \sigma) (u, \tau) = (s+\sigma u, \sigma \tau).
\end{equation*}
Thus, the action of $\mathfrak{S}_n$ on $\mathbb{Z}_2^n$ extends to
the following action of $\mathfrak{H}_n$ on $\mathbb{Z}_2^n$:
\begin{equation*}
  (s, \sigma) u= s+ \sigma u.
\end{equation*}
This naturally induces an action on the simplices containing these
sequences:
\begin{equation*}
  (s,\sigma) [u] = [(s,\sigma) u],
\end{equation*}
and on the vector spaces spanned by these simplices:
\begin{equation*}
  (s,\sigma) \langle [u] \rangle = \langle (s,\sigma) [u] \rangle,
\end{equation*}
which we enclose in angled brackets.  When writing an element of
$\mathbb{Z}_2^n$ as a vertex of a simplex, we may remove parentheses
to simplify the notation; for example, $[(0^n)]=[0^n]$. Therefore, we
can write the space of $0$-dimensional simplicial chains in $X^{n,0}$
as
\begin{equation*}
  C_0 (X^{n,0}) = \bigoplus_{u\in \mathbb{Z}_2^n} \langle [u] \rangle
  = \bigoplus_{u\in \mathbb{Z}_2^n} (u, 1_{\mathfrak{S}_n}) \langle [0^n] \rangle.
\end{equation*}
We note that $C_0 (X^{n,0})$ is a representation of $\mathfrak{H}_n$,
i.e., a vector space with a linear $\mathfrak{H}_n$-action.

\subsection{Chains as an induced representation}
\label{sec:chains-as-an}

The group algebra (or group ring) of $\mathfrak{H}_n$ is the
$\Bbbk$-vector space
\begin{equation*}
  \Bbbk [\mathfrak{H}_n] =
  \bigoplus_{(s,\sigma) \in \mathfrak{H}_n} \langle (s,\sigma)\rangle
\end{equation*}
spanned by the elements of $\mathfrak{H}_n$ with multiplication
defined by extending $\Bbbk$-linearly the group law of
$\mathfrak{H}_n$ \cite[Section 18.1]{zbMATH01970438}. Since
$\mathfrak{S}_n$ is a subgroup of $\mathfrak{H}_n$, the elements
$(0,\sigma)$ with $\sigma\in\mathfrak{S}_n$ span a subring of
$\Bbbk [\mathfrak{H}_n]$ isomorphic to the group algebra
$\Bbbk [\mathfrak{S}_n]$ of $\mathfrak{S}_n$. For every element of
$\mathfrak{H}_n$ we have an equality
$\langle (s,\sigma) \rangle = (s,1_{\mathfrak{S}_n}) \langle
(0,\sigma) \rangle$, from which we get
\begin{equation}\label{eq:1}
  \Bbbk [\mathfrak{H}_n] = \bigoplus_{s \in \mathbb{Z}_2^n}
  (s,1_{\mathfrak{S}_n}) \bigoplus_{\sigma \in \mathfrak{S}_n}
  \langle (0,\sigma)\rangle \cong \bigoplus_{s \in \mathbb{Z}_2^n}
  (s,1_{\mathfrak{S}_n}) \Bbbk [\mathfrak{S}_n].
\end{equation}
Now, since $(0^n)$ is fixed by permutations in $\mathfrak{S}_n$, the
space $\langle [0^n]\rangle$ is a trivial left module over
$\Bbbk [\mathfrak{S}_n]$. Therefore, using properties of tensor
products \cite[Section 10.4, Theorem 17, Corollary
18]{zbMATH01970438}, we get an isomorphism of $\Bbbk [\mathfrak{H}_n]$
modules
\begin{equation}\label{eq:2}
  \begin{split}
    \Bbbk [\mathfrak{H}_n] \otimes_{\Bbbk [\mathfrak{S}_n]} \langle [0^n]\rangle
    &\cong \bigoplus_{s \in \mathbb{Z}_2^n}
    (s,1_{\mathfrak{H}_n}) \left( \Bbbk [\mathfrak{S}_n]
    \otimes_{\Bbbk [\mathfrak{S}_n]} \langle [0^n]\rangle \right)\\
    &\cong \bigoplus_{s \in \mathbb{Z}_2^n}
    (s,1_{\mathfrak{H}_n}) \langle [0^n]\rangle
    \cong C_0 (X^{n,0}).
  \end{split}
\end{equation}
The tensor
$\Bbbk [\mathfrak{H}_n] \otimes_{\Bbbk [\mathfrak{S}_n]} \langle
[0^n]\rangle$ is the $\Bbbk [\mathfrak{H}_n]$-module obtained by
extension of scalars from the $\Bbbk [\mathfrak{S}_n]$-module
$\langle [0^n]\rangle$ \cite[Sections 10.4, 19.3]{zbMATH01970438}. In
the language of representation theory,
$\Bbbk [\mathfrak{H}_n] \otimes_{\Bbbk [\mathfrak{S}_n]} \langle
[0^n]\rangle$ is the $\mathfrak{H}_n$-representation induced by the
$\mathfrak{S}_n$-representation $\langle [0^n]\rangle$
\cite[Definition 1.12.2]{zbMATH01601795}. While the tensor product
makes certain algebraic properties more transparent, we prefer working
in the language of representation theory. The isomorphism in
\Cref{eq:2} can then be written as
\begin{equation*}
  C_0 (X^{n,0}) \cong \operatorname{Ind}^{\mathfrak{H}_n}_{\mathfrak{S}_{n}}
  \langle [0^n]\rangle.
\end{equation*}

\subsection{Decomposition into irreducibles}
\label{sec:decomp-into-irred}

A representation of a group is called irreducible (or simple) if it
has no proper nontrivial subrepresentation \cite[Definition
1.4.5]{zbMATH01601795}. Over the complex numbers, every
finite-dimensional representation of a finite group is semisimple,
i.e., isomorphic to a direct sum of irreducible subrepresentations;
this result is known as Maschke's Theorem \cite[Theorem
1.5.3]{zbMATH01601795}. The same result holds over a field whose
characteristic does not divide the order of the group \cite[Theorem
4.1.1]{zbMATH05943514}; in particular, this holds over any field of
characteristic zero.

The irreducible representations of the symmetric group
$\mathfrak{S}_n$ are in bijection with the partitions of $n$
\cite[Theorem 2.4.6]{zbMATH01601795}; this is true over any field of
characteristic zero \cite[Corollary 5.12.4]{zbMATH05943514}. By
partition of $n$, we mean a sequence
$\lambda = (\lambda_1,\dots,\lambda_n)$ with
$\lambda_1 \geqslant \lambda_2 \geqslant \dots\geqslant \lambda_n
\geqslant 0$ and $\lambda_1 + \lambda_2 +\dots + \lambda_n = n$. The
entries $\lambda_i$ are referred to as the parts of $\lambda$. We also
write $\lambda = (\dots i^{m_i} \dots 1^{m_1} 0^{m_0})$ to indicate
that the part $i$ is repeated $m_i$ times in $\lambda$. For example,
\begin{equation*}
  (1^k 0^{n-k}) = (\underbrace{1,\dots,1}_k ,\underbrace{0,\dots,0}_{n-k}).
\end{equation*}
For convenience, we may occasionally omit the trailing zeros in the
sequence $\lambda$.

The irreducible representation of $\mathfrak{S}_n$ indexed by the
partition $\lambda$, also known as a Specht module, is often denoted
in the literature as $S_\lambda$ or $V_\lambda$; we adopt the more
compact notation $\{\lambda\}$. We may also omit parentheses when
expanding a partition; so
\begin{equation*}
  \{\lambda\} = \{(\lambda_1,\dots,\lambda_r)\}
  = \{\lambda_1,\dots,\lambda_r\}
  = \{(\dots i^{m_i} \dots 1^{m_1} 0^{m_0})\}
  = \{\dots i^{m_i} \dots 1^{m_1} 0^{m_0}\}
\end{equation*}
are different notations we may use for the irreducible representation
of $\mathfrak{S}_n$ indexed by the partition
$\lambda = (\lambda_1,\dots,\lambda_r) = (\dots i^{m_i} \dots 1^{m_1}
0^{m_0})$ of $n$. An explicit construction for these representations
can be found in \cite[Section 2.3]{zbMATH01601795}. For now, we only need
to understand two irreducible representations. The representation
$\{n\}$, corresponding to the partition with a single part equal to
$n$, is a 1-dimensional trivial representation, i.e., for all
$\sigma \in \mathfrak{S}_n$ and $u\in \{n\}$, we have $\sigma u = u$.
The representation $\{1^n\}$, corresponding to the partition with $n$
parts equal to $1$, is a 1-dimensional sign (or alternating)
representation, i.e., for all $\sigma \in \mathfrak{S}_n$ and
$u\in \{1^n\}$, we have $\sigma u = \operatorname{sgn} (\sigma) u$
where $\operatorname{sgn} (\sigma) $ denotes the sign of the
permutation $\sigma$. As previously observed, the 0-dimensional
simplex $[0^n]$ spans a 1-dimensional trivial representation of
$\mathfrak{S}_n$; therefore, we have an isomorphism
$\langle [0^n]\rangle \cong \{n\}$ of representations of
$\mathfrak{S}_n$. This induces the isomorphism of representations of
$\mathfrak{H}_n$
\begin{equation*}
  C_0 (X^{n,0}) \cong \operatorname{Ind}^{\mathfrak{H}_n}_{\mathfrak{S}_{n}} \{n\},
\end{equation*}
which gives a purely representation-theoretic description of
$C_0 (X^{n,0})$.

The irreducible representations of the hyperoctahedral group
$\mathfrak{H}_n$ are in bijection with bipartitions of $n$, i.e.,
pairs $(\lambda;\mu)$ with $\lambda$ a partition of $n_1$ and $\mu$ a
partition of $n_2$ such that $n_1+n_2=n$ \cite[Corollary
II.4]{zbMATH03590472}; this is true over any field of characteristic
zero \cite[Theorem 5.5.6]{zbMATH01516227}. Note that $\lambda$ or
$\mu$ could be the empty partition, i.e., the only partition of 0,
which we denote simply by $0$.  We denote by $\{\lambda;\mu\}$ the
irreducible representation indexed by the bipartition $(\lambda;\mu)$;
we will summarize its construction and provide a dimension formula in
\Cref{sec:constr-dimens}. An explicit description of these irreducible
representations is given by Geissinger and Kinch in \cite[Section
II]{zbMATH03590472}; however, their notation swaps partitions, i.e.,
our $\{\lambda;\mu\}$ is isomorphic to their $\{\mu;\lambda\}$.  For
this section, we only need the irreducibles $\{\lambda;\mu\}$ with
both partitions having at most one part, i.e.,
$\{\lambda;\mu\} = \{n-i;i\}$ where $i$ is an integer satisfying
$0\leqslant i\leqslant n$; we give more information about these
representations in \Cref{exa:2}.  Conveniently, there is an
isomorphism of representations of $\mathfrak{H}_n$
\begin{equation}\label{eq:3}
  \operatorname{Ind}^{\mathfrak{H}_n}_{\mathfrak{S}_{n}} \{n\}
  \cong \bigoplus_{i=0}^n \{n-i;i\},
\end{equation}
presented, for example, in \cite[Lemma III.6]{zbMATH03590472}. Thus,
we have
\begin{equation}\label{eq:4}
  C_0 (X^{n,0}) \cong \bigoplus_{i=0}^n \{n-i;i\}.
\end{equation}

\subsection{Homology as a representation}
\label{sec:homol-as-repr}

We summarize our work so far.
\begin{enumerate}
\item The simplices $[u]$ with $u$ a vertex of $Q_n$ form a
  $\Bbbk$-basis of $C_0 (X^{n,0})$.
\item All vertices of $Q_n$ are in the $\mathfrak{H}_n$-orbit of
  $(0^n)$.
\item The stabilizer of $(0^n)$ is $\mathfrak{S}_n$.
\item The $\mathfrak{S}_n$-action on $\langle [0^n]\rangle$ is
  trivial, hence $\langle [0^n]\rangle \cong \{n\}$.
\item As a representation of $\mathfrak{H}_n$,
  $C_0 (X^{n,0}) \cong
  \operatorname{Ind}^{\mathfrak{H}_n}_{\mathfrak{S}_{n}} \langle [0^n]\rangle
  \cong \operatorname{Ind}^{\mathfrak{H}_n}_{\mathfrak{S}_{n}} \{n\}$.
\item Its decomposition into irreducibles is
  $C_0 (X^{n,0}) \cong \bigoplus_{i=0}^n \{n-i;i\}$.
\end{enumerate}
Since $X^{n,0}$ has dimension zero, we have
$H_0 (X^{n,0})\cong C_0 (X^{n,0})$ and the next result follows from
our previous discussion.
\begin{theorem}\label{thm:main0}
  For every integer $n\geqslant 0$, the only nonzero homology group of
  $X^{n,0}$ is
  \begin{equation*}
    H_0 (X^{n,0}) \cong \bigoplus_{i=0}^n \{n-i;i\}
  \end{equation*}
  as a representation of $\mathfrak{H}_n$.
\end{theorem}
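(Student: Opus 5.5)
The plan is to assemble the facts summarized in \Cref{sec:homol-as-repr}. First, $X^{n,0}$ has no faces of positive dimension: two distinct vertices $u\neq v$ of $Q_n$ have Hamming distance $d(u,v)\geqslant 1>0$, so only the singletons $\{u\}$ and the empty face lie in $\operatorname{VR}(X^n;0)$. Hence the simplicial chain complex is concentrated in degree $0$. All boundary maps into and out of $C_0(X^{n,0})$ vanish (we use ordinary, non-augmented homology), so $H_0(X^{n,0}) = C_0(X^{n,0})$ and $H_i(X^{n,0})=0$ for $i\neq 0$. Because $\mathfrak{H}_n$ acts by simplicial automorphisms, the identification $H_0 = C_0$ is $\mathfrak{H}_n$-equivariant. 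Thus it suffices to decompose $C_0(X^{n,0})$.

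Next, I would verify that $C_0(X^{n,0})\cong \operatorname{Ind}^{\mathfrak{H}_n}_{\mathfrak{S}_n}\{n\}$. Two facts are needed.
\begin{itemize}
\item The action $(s,\sigma)u = s+\sigma u$ is transitive on $\mathbb{Z}_2^n$, since $(u,1)\cdot 0^n = u$.
\item The stabilizer of $0^n$ is exactly $\{(0,\sigma)\}\cong\mathfrak{S}_n$, since $s+\sigma 0^n = s$ vanishes only when $s=0$.
\end{itemize}
So $C_0$ is the permutation representation on the cosets $\mathfrak{H}_n/\mathfrak{S}_n$. This is the induced representation of the trivial module $\langle[0^n]\rangle\cong\{n\}$, as made explicit in \Cref{eq:1,eq:2}. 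The isomorphism in \Cref{eq:2} sends $(s,\sigma)\otimes[0^n]\mapsto[s]$, and equivariance is immediate from the group law. The last step invokes \Cref{eq:3}, the decomposition $\operatorname{Ind}^{\mathfrak{H}_n}_{\mathfrak{S}_n}\{n\}\cong\bigoplus_{i=0}^n\{n-i;i\}$ from \cite[Lemma III.6]{zbMATH03590472}, which yields \Cref{eq:4} and the theorem.

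The only substantive input is \Cref{eq:3}, which we take from the literature. As a sanity check, or to make it self-contained, I would argue as follows. Restricted to the normal subgroup $\mathbb{Z}_2^n$, the module $C_0$ is the regular representation. Its characters $\chi_T$ are indexed by subsets $T\subseteq[n]$, and they fall into $\mathfrak{S}_n$-orbits according to $|T|=i$. The stabilizer of $\chi_{\{1,\dots,i\}}$ in $\mathfrak{S}_n$ is $\mathfrak{S}_{n-i}\times\mathfrak{S}_i$, and each character occurs once with the trivial extension. Clifford/Mackey theory then identifies the $i$-th isotypic piece with $\{n-i;i\}$, and the dimension check $\sum_i\binom{n}{i}=2^n$ confirms the count. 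The main point requiring care is matching conventions, namely which factor is $\lambda$ and which is $\mu$, since the paper swaps those of Geissinger--Kinch. I would fix this by noting that $i=0$ must give the trivial representation $\{n;0\}$, spanned by $\sum_u[u]$.
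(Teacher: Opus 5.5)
Your proposal is correct and matches the paper's argument: the complex is concentrated in degree zero, so $H_0(X^{n,0}) = C_0(X^{n,0})$ equivariantly; orbit--stabilizer identifies this with $\operatorname{Ind}^{\mathfrak{H}_n}_{\mathfrak{S}_n}\{n\}$; and \Cref{eq:3} gives the decomposition. Your optional Clifford-theory check is the appendix's weight-space construction applied to the vectors $\sum_u (-1)^{w\cdot u}[u]$, and the convention worry is moot here because the set of bipartitions $(n-i;i)$, $0\leqslant i\leqslant n$, is unchanged when the two partitions are swapped.
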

It follows from \Cref{eq:29} that
$\dim_\Bbbk \{n-i;i\} = \binom{n}{i}$. Hence, we have:
\begin{equation*}
  \dim_\Bbbk \left(H_0 (X^{n,0}) \right) = \sum_{i=0}^n \dim_\Bbbk \{n-i;i\}
  = \sum_{i=0}^n \binom{n}{i} = 2^n,
\end{equation*}
which matches the number of vertices of $Q_n$.

\begin{remark}
  To compute the reduced homology of $X^{n,0}$, we augment the
  simplicial chain complex with the map
  $\partial_0\colon C_0 (X^{n,0}) \to C_{-1} (X^{n,0})$, where
  $C_{-1} (X^{n,0}) = \langle \varnothing\rangle$ is the $\Bbbk$-span
  of the empty face. The action of $\mathfrak{H}_n$ on the empty face
  is trivial, hence $C_{-1} (X^{n,0}) \cong \{n;0\}$, the irreducible
  trivial representation of $\mathfrak{H}_n$. Schur's Lemma states
  that every map of representations between two irreducibles is either
  zero or an isomorphism (see \cite[Proposition
  2.3.9]{zbMATH05943514}). Since $\partial_0$ is surjective, it is
  nonzero when restricted to $\{n;0\}$ and zero when restricted to all
  other irreducible representations in $C_0 (X^{n,0})$. Therefore, we
  have
  \begin{equation*}
    \widetilde{H}_0 (X^{n,0}) = \ker (\partial_0)
    \cong \bigoplus_{i=1}^n \{n-i;i\}
  \end{equation*}
  as a representation of $\mathfrak{H}_n$.
\end{remark}

\section{Scale one}
\label{sec:scale-one}

The complex $X^{n,1}$ is the graph $Q_n$, which is connected with
$2^n$ vertices and $n 2^{n-1}$ edges. Fix a spanning tree $T$ of
$Q_n$. Since $T$ is contractible, $X^{n,1}$ is homotopy equivalent to
the quotient space $Q_n / T$, which consists of a single point and a
loop for every edge of $Q_n$ not belonging to $T$. Since $T$ is a tree
on $2^n$ vertices, it has $2^n -1$ edges. Therefore, $X^{n,1}$ is
homotopy equivalent to a wedge sum of
\begin{equation*}
  n 2^{n-1} - (2^n -1) = (n-2)2^{n-1} +1
\end{equation*}
circles. It follows that $H_0 (X^{n,1}) \cong \Bbbk$ and
$H_1 (X^{n,1}) \cong \Bbbk^{(n-2)2^{n-1} +1}$ are the only nonzero
homology groups of $X^{n,1}$. For a source on this material, the
reader may consult \cite[Section 8.3]{zbMATH00049719}.

\subsection{The chain complex}
\label{sec:chain-complex}

To compute the homology of $X^{n,1}$ we construct the complex of
simplicial chains
\begin{equation*}
  C_0 (X^{n,1}) \xleftarrow{\partial_1} C_1 (X^{n,1})
\end{equation*}
where $C_0 (X^{n,0})$ is spanned by the vertices of $Q_n$, so it is
described as the representation of $\mathfrak{H}_n$ in \Cref{eq:4}. To
describe $C_1 (X^{n,1})$, we follow the outline in
\Cref{sec:homol-as-repr}.

\begin{enumerate}
\item The simplices $[u,v]$ with $u,v$ vertices of $Q_n$ such that
  $d(u,v)= 1$ form a $\Bbbk$-basis of $C_1 (X^{n,1})$.
\item All these 1-simplices are in the $\mathfrak{H}_n$-orbit of
  $[0^n,0^{n-1}1]$.
\item The stabilizer of $[0^n,0^{n-1}1]$ is the subgroup of
  $\mathfrak{H}_n$ consisting of the elements $(s,\sigma)$ such that
  $s\in \{0\}^{n-1} \times \mathbb{Z}_2$ and
  $\sigma (\{1,\dots,n-1\}) = \{1,\dots,n-1\}$. Hence, we have
  $\operatorname{Stab} ([0^n,0^{n-1}1]) \cong \mathfrak{S}_{n-1}
  \times \mathfrak{H}_1$.
\item As a 1-dimensional representation of
  $\mathfrak{S}_{n-1} \times \mathfrak{H}_1$,
  $\langle [0^n,0^{n-1}1] \rangle \cong U\otimes V$\footnote{Tensor
    products without a ring explicitly indicated as a subscript are
    taken over $\Bbbk$.}, with $U$ a 1-dimensional representation of
  $\mathfrak{S}_{n-1}$ and $V$ a 1-dimensional representation of
  $\mathfrak{H}_1$ (see \cite[Theorem 3.10.2]{zbMATH05943514}). The
  factor $\mathfrak{S}_{n-1}$ acts trivially on $[0^n,0^{n-1}1]$ so
  $U$ is the trivial representation $\{n-1\}$. The factor
  $\mathfrak{H}_1$ has only two irreducible representations, both
  one-dimensional: the trivial representation $\{1;0\}$, and the
  alternating representation $\{0;1\}$ on which the element of order
  two acts by a change of sign. The element of order two in
  $\mathfrak{H}_1 \subset \operatorname{Stab} ([0^{n-1}1,0^n])$
  corresponds to
  $((0,\dots,0,1),\operatorname{id}) \in \mathfrak{H}_n$, which acts
  on $[0^n,0^{n-1}1]$ by
  \begin{equation*}
    ((0,\dots,0,1),\operatorname{id}) [0^n,0^{n-1}1] = [0^{n-1}1,0^n]
    = -[0^n,0^{n-1}1]
  \end{equation*}
  when taking into account the orientation of the simplex. As a
  result, $V\cong \{0;1\}$. Therefore, as a representation of
  $\mathfrak{S}_{n-1} \times \mathfrak{H}_1$, we have
  $\langle [0^n,0^{n-1}1] \rangle \cong \{n-1\} \otimes \{0;1\}$.
\item As a representation of $\mathfrak{H}_n$,
  \begin{equation}\label{eq:5}
    C_1 (X^{n,1}) \cong \operatorname{Ind}^{\mathfrak{H}_n}_{\mathfrak{S}_{n-1} \times \mathfrak{H}_1}
    \langle [0^n,0^{n-1}1] \rangle \cong
    \operatorname{Ind}^{\mathfrak{H}_n}_{\mathfrak{S}_{n-1} \times \mathfrak{H}_1}
    ( \{n-1\} \otimes \{0;1\}).
  \end{equation}
\end{enumerate}

Since induction is transitive, we can induce from
$\mathfrak{S}_{n-1} \times \mathfrak{H}_1$ to
$\mathfrak{H}_{n-1} \times \mathfrak{H}_1$ on the way to
$\mathfrak{H}_n$. This allows us to distribute the induction over the
tensor product.
\begin{equation*}
  \begin{split}
    &\operatorname{Ind}^{\mathfrak{H}_n}_{\mathfrak{S}_{n-1} \times \mathfrak{H}_1}
      (\{n-1\} \otimes \{0;1\})
    \cong
    \operatorname{Ind}^{\mathfrak{H}_n}_{\mathfrak{H}_{n-1} \times \mathfrak{H}_1}
    \operatorname{Ind}^{\mathfrak{H}_{n-1} \times \mathfrak{H}_1}_{\mathfrak{S}_{n-1} \times \mathfrak{H}_1}
      (\{n-1\} \otimes \{0;1\})\\
    \cong{}
    &\operatorname{Ind}^{\mathfrak{H}_n}_{\mathfrak{H}_{n-1} \times \mathfrak{H}_1} \left(
    \operatorname{Ind}^{\mathfrak{H}_{n-1}}_{\mathfrak{S}_{n-1}} \{n-1\} 
    \otimes \operatorname{Ind}^{\mathfrak{H}_1}_{\mathfrak{H}_1} \{0;1\} \right)
    \cong
    \operatorname{Ind}^{\mathfrak{H}_n}_{\mathfrak{H}_{n-1} \times \mathfrak{H}_1} \left(
    \operatorname{Ind}^{\mathfrak{H}_{n-1}}_{\mathfrak{S}_{n-1}} \{n-1\}
    \otimes \{0;1\}
      \right)
  \end{split}
\end{equation*}
The inner induction can be dealt with as in \Cref{eq:3}. Then, using
the fact that both tensor products and induction commute with direct
sums, we get the following isomorphism.
\begin{equation*}
  \operatorname{Ind}^{\mathfrak{H}_n}_{\mathfrak{S}_{n-1} \times \mathfrak{H}_1}
  (\{n-1\} \otimes \{0;1\})
  \cong
  \bigoplus_{i=0}^{n-1} \operatorname{Ind}^{\mathfrak{H}_n}_{\mathfrak{H}_{n-1} \times \mathfrak{H}_1}
  \left( \{n-1-i;i\} \otimes \{0;1\} \right)
\end{equation*}
Finally, we can use a generalization of the Littlewood-Richardson rule
\cite[Section 4.9]{zbMATH01601795} to expand these inductions (see
\cite[Theorem III.2]{zbMATH03590472}, with the reminder that the
authors write $\{\mu;\lambda\}$ for our irreducible representation
$\{\lambda;\mu\}$). In fact, since we are tensoring with $\{0;1\}$, we
are looking at a special case, known as the Pieri rule \cite[Section
2.2]{zbMATH01001729}. Essentially, when inducing
$\{n-1-i;i\} \otimes \{0;1\}$ from
$\mathfrak{H}_{n-1} \times \mathfrak{H}_1$ to $\mathfrak{H}_n$ we can
either add the $1$ and the $i$ after the semicolons to obtain
$\{n-1-i;i+1\}$ or, if $i\geqslant 1$, we can combine them to obtain
$\{n-1-i;(i,1)\}$. After replacing $i$ with $i-1$, we have
\begin{equation*}
  C_1 (X^{n,1}) \cong
  \left(\bigoplus_{i=1}^{n} \{n-i;i\}\right) \oplus
  \left(\bigoplus_{i=2}^{n} \{n-i;(i-1,1)\} \right)
\end{equation*}
as a representation of $\mathfrak{H}_n$.

\subsection{Computing homology}
\label{sec:computing-homology}

We compute the homology of the following complex.
\begin{equation*}
  0\leftarrow C_0 (X^{n,1}) \xleftarrow{\partial_1} C_1 (X^{n,1}) \leftarrow 0
\end{equation*}
By definition,
$H_0 (X^{n,1}) \cong C_0 (X^{n,1}) / \operatorname{im} \partial_1$.
Schur's Lemma states that every map of representations between two
irreducibles is either zero or an isomorphism (see \cite[Proposition
2.3.9]{zbMATH05943514}).  The 1-dimensional irreducible representation
$\{n;0\}$ appears in $C_0 (X^{n,1})$ but not in $C_1 (X^{n,1})$,
therefore $\{n;0\}$ is not in $\operatorname{im} \partial_1$ by
Schur's Lemma. Because $X^{n,1}$ is connected, it follows that
$H_0 (X^{n,1}) \cong \{n;0\}$ and
\begin{equation}\label{eq:6}
  \operatorname{im} \partial_1 = \bigoplus_{i=1}^n \{n-i;i\}.
\end{equation}
Now, the irreducible representations appearing in $C_1 (X^{n,1})$ but
not in $\operatorname{im} \partial_1$ must map to zero by Schur's
Lemma.  Therefore, we have
\begin{equation}\label{eq:7}
  H_1 (X^{n,1}) \cong \ker \partial_1 = \bigoplus_{i=2}^n \{n-i;(i-1,1)\}.
\end{equation}

We can summarize the discussion above into the following result.
\begin{theorem}\label{thm:main1}
  For every integer $n\geqslant 1$, the nonzero homology groups of
  $X^{n,1}$ are
  \begin{equation*}
    H_0 (X^{n,1}) \cong \{n;0\}, \qquad
    H_1 (X^{n,1}) \cong \bigoplus_{i=2}^n \{n-i;(i-1,1)\},
  \end{equation*}
  as representations of $\mathfrak{H}_n$.
\end{theorem}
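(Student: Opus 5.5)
The plan is to assemble the theorem from the decompositions of $C_0(X^{n,1})$ and $C_1(X^{n,1})$ already obtained, together with Schur's Lemma applied to the $\mathfrak{H}_n$-equivariant map $\partial_1$. Since $X^{n,1}$ is the graph $Q_n$, it has no simplices of dimension $2$ or higher. Hence $H_k(X^{n,1})=0$ for $k\geqslant 2$, and only $H_0$ and $H_1$ need to be identified.

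\textbf{Step 1: $H_0$.} By \Cref{eq:4}, $C_0(X^{n,1})\cong\bigoplus_{i=0}^n\{n-i;i\}$, and by the Pieri rule computation above,
\[
C_1(X^{n,1})\cong \left(\bigoplus_{i=1}^{n}\{n-i;i\}\right)\oplus\left(\bigoplus_{i=2}^{n}\{n-i;(i-1,1)\}\right).
\]
Both decompositions are multiplicity-free. The boundary map $\partial_1$ is equivariant, since $\mathfrak{H}_n$ acts by simplicial automorphisms and sends oriented simplices to oriented simplices. The summand $\{n;0\}$ occurs in $C_0$ but not in $C_1$, so by Schur's Lemma it is not contained in $\operatorname{im}\partial_1$. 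On the other hand, $Q_n$ is connected, so $\dim H_0=1$. It follows that $H_0(X^{n,1})\cong\{n;0\}$. By semisimplicity, $\operatorname{im}\partial_1$ is then the complement $\bigoplus_{i=1}^n\{n-i;i\}$, as in \Cref{eq:6}.

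\textbf{Step 2: $H_1$.} Since $X^{n,1}$ has no $2$-simplices, $H_1(X^{n,1})=\ker\partial_1$. Because $\{\lambda;\mu\}\not\cong\{\lambda';\mu'\}$ for distinct bipartitions, Schur's Lemma forces each summand $\{n-i;(i-1,1)\}$ of $C_1$ to map to zero: none of these summands occurs in $C_0$. This gives
\[
\ker\partial_1\supseteq\bigoplus_{i=2}^n\{n-i;(i-1,1)\}.
\]
Each summand $\{n-i;i\}$ of $C_1$ with $1\leqslant i\leqslant n$ occurs exactly once in $C_1$. It also occurs exactly once in $\operatorname{im}\partial_1$, by Step 1. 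So $\partial_1$ restricted to that summand must be an isomorphism onto its image, and these summands meet $\ker\partial_1$ trivially. Here we use multiplicity-freeness: the isotypic component of $\{n-i;i\}$ in $C_1$ is a single copy, and that copy surjects onto the corresponding copy in the image. This yields \Cref{eq:7}.

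\textbf{Main obstacle.} The delicate point is justifying the Pieri-rule expansion of $C_1$ and the multiplicity-freeness it provides. This rests on the cited Littlewood--Richardson rule for $\mathfrak{H}_n$, so it can be taken as given.

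\textbf{Sanity check.} As a dimension check, one can compare with $(n-2)2^{n-1}+1$, the Betti number computed from the spanning tree argument. Using \Cref{eq:29}, $\dim\{n-i;(i-1,1)\}=\binom{n}{i}(i-1)$, and
\[
\sum_{i=2}^n (i-1)\binom{n}{i}=n2^{n-1}-2^n+1=(n-2)2^{n-1}+1,
\]
which agrees.
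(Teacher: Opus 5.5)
Your proposal is correct and follows essentially the same route as the paper: decompose $C_0(X^{n,1})$ and $C_1(X^{n,1})$ via the Pieri rule, use Schur's Lemma together with connectivity of $Q_n$ to identify $H_0(X^{n,1})\cong\{n;0\}$ and $\operatorname{im}\partial_1$, and then read off $\ker\partial_1$ as the complementary summand. Your Step 2 spells out the multiplicity-freeness argument, namely that each $\{n-i;i\}$ in $C_1$ must map isomorphically onto its copy in the image. The paper leaves this implicit when it says the summands not in $\operatorname{im}\partial_1$ must map to zero.
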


As a sanity check, let us compute the dimension of $H_1 (X^{n,1})$. We
will use the fact that the dimension of the Specht module $\{i-1,1\}$
is $i-1$ (see \cite[Theorem 3.10.2]{zbMATH01601795}) and the binomial
identity $\sum_{i=0}^n i\binom{n}{i} = n 2^{n-1}$. By \Cref{eq:29}, we
have
\begin{equation*}
  \dim_\Bbbk H_1 (X^{n,1})
  = \sum_{i=2}^n \dim_\Bbbk \{n-i;(i-1,1)\}
  = \sum_{i=2}^n \binom{n}{i} (i-1)
  = n 2^{n-1} -2^n +1
\end{equation*}
as recorded at the beginning of this section.

\section{A supplementary filtration and its spectral sequence}
\label{sec:suppl-filtr}

In \cite[Section 2.4]{zbMATH07819241}, the authors define the
\emph{cubic hull} of a simplex $\Delta$ in $X^{n,r}$ as the smallest
isometric copy of a cube $Q_p$ containing $\Delta$. Based on this
notion, we give the following.

\begin{definition}
  The \emph{cubic dimension} of a simplex $\Delta$ in $X^{n,r}$ is the
  nonnegative integer $p$ such that the cubic hull of $\Delta$ is
  isometric to $Q_p$.
\end{definition}

\begin{example}\label{exa:1}
  In $X^{n,2}$, the edge $[0^n,0^{n-1}1]$ can be embedded in $Q_1$ but
  not $Q_0$; therefore, it has cubic dimension 1.  Similarly, the edge
  $[0^n,0^{n-2}1^2]$ can be embedded in $Q_2$ but not $Q_1$;
  therefore, it has cubic dimension 2.
\end{example}

The following observations follow directly from the definition.
\begin{itemize}
\item The cubic dimension of a simplex in $X^{n,r}$ is between $0$ and
  $n$ included.
\item The cubic dimension of $\Delta$ is the number of nonconstant
  entries across the binary sequences corresponding to the vertices of
  $\Delta$.
\item Since the $\mathfrak{H}_n$-action preserves the Hamming
  distance, it also preserves cubic dimension. In particular,
  simplices belonging to the same $\mathfrak{H}_n$-orbit have the same
  cubic dimension.
\item Every face of a simplex $\Delta$ in $X^{n,r}$ has cubic
  dimension less than or equal to that of $\Delta$.
\end{itemize}

Let $X^{n,r}_p$ be the collection of all simplices in $X^{n,r}$ that
have cubic dimension at most $p$. By the last observation above, $X^{n,r}_p$ is a
subcomplex of $X^{n,r}$. Since $X^{n,r}_p \subseteq X^{n,r}_{p+1}$, we
have a finite increasing filtration $\{X^{n,r}_p\}_{p\geqslant 0}$ of
$X^{n,r}$ by simplicial complexes. The idea is to compute the homology
of $X^{n,r}$ using a spectral sequence associated to this filtration;
for this purpose, we follow the setup in \cite[Section
5.4]{zbMATH00595200}. The terms in the zero page are given by
\begin{equation*}
  E^0_{p,q} = C_{p+q} (X^{n,r}_p) / C_{p+q} (X^{n,r}_{p-1});
\end{equation*}
the differentials are inherited from the simplicial chain complex
$C_\bullet (X^{n,r})$. By the properties of cubic dimension, this is a
bounded spectral sequence (see \cite[Definition
5.4.2]{zbMATH00595200}).  The terms in the first page are given by
\begin{equation*}
  E^1_{p,q} = H_{p+q} (E^0_{p,\bullet}).
\end{equation*}
By \cite[Theorem 5.5.1]{zbMATH00595200}, the spectral sequence
converges, resulting in a filtration of $H_\bullet (X^{n,r})$. Since
our objects are vector spaces, the associated graded object of this
filtration on $H_\bullet (X^{n,r})$ is isomorphic to
$H_\bullet (X^{n,r})$. In practice, we have an isomorphism:
\begin{equation}\label{eq:8}
  H_i (X^{n,r}) \cong \bigoplus_{p+q=i} E^\infty_{p,q},
\end{equation}
where $E^\infty_{p,q} = E^j_{p,q}$ for a sufficiently large
$j\geqslant 0$.  Moreover, because the $\mathfrak{H}_n$-action on
$X^{n,r}$ preserves cubic dimension, the filtration
$\{X^{n,r}_p\}_{p\geqslant 0}$ is stable under the
$\mathfrak{H}_n$-action. This implies that $\mathfrak{H}_n$ acts on
all terms of the spectral sequence and \Cref{eq:8} is an isomorphism
of $\mathfrak{H}_n$-representations. We will explicitly use this
spectral sequence to describe the homology of $X^{n,r}$ as a
representation of $\mathfrak{H}_n$ in the later sections. The rest of
this section is dedicated to general facts about our spectral
sequence.

\subsection{Zeroth page}
\label{sec:zeroth-page}

Recall that $C_{p+q} (X^{n,r}_p)$ is the $\Bbbk$-vector space with
basis given by all simplices of dimension $p+q$ in $X^{n,r}_p$, i.e.,
all simplices of dimension $p+q$ and cubic dimension at most $p$ in
$X^{n,r}$.  Let $C^{n,r}_{p+q,p}$ denote the subspace of
$C_{p+q} (X^{n,r}_p)$ spanned by the simplices with cubic dimension
exactly $p$. With this notation, we have
$C_{p+q} (X^{n,r}_p) = C^{n,r}_{p+q,p} + C_{p+q}
(X^{n,r}_{p-1})$. Notice also that
$C^{n,r}_{p+q,p} \cap C_{p+q} (X^{n,r}_{p-1}) = 0$. By the second
isomorphism theorem, we get
\begin{equation}\label{eq:9}
  E^0_{p,q} = \frac{C^{n,r}_{p+q,p} + C_{p+q} (X^{n,r}_{p-1})}{C_{p+q} (X^{n,r}_{p-1})} \cong
  \frac{C^{n,r}_{p+q,p}}{C^{n,r}_{p+q,p} \cap C_{p+q} (X^{n,r}_{p-1})} \cong C^{n,r}_{p+q,p}.
\end{equation}
With this identification, the differential
$d^0_{p,q} \colon E^0_{p,q} \to E^0_{p,q-1}$ in the spectral sequence
is the restriction of the boundary map
$\partial_{p+q} \colon C_{p+q} (X^{n,r}) \to C_{p+q-1} (X^{n,r})$ to
the subspaces spanned by simplices with cubic dimension exactly $p$.

Fix integers $i,p$ with $i\geqslant 0$ and $0\leqslant p\leqslant
n$. The space $C^{n,r}_{i,p}$ can be described with same approach we
used for the terms of the chain complexes in scale zero and one.
\begin{enumerate}
\item By construction, the $i$-simplices of $X^{n,r}$ with cubic
  dimension exactly $p$ form a $\Bbbk$-basis of $C^{n,r}_{i,p}$.
\item By definition of cubic dimension, every simplex in the
  $\Bbbk$-basis of $C^{n,r}_{i,p}$ is in the $\mathfrak{H}_n$-orbit of
  a simplex of cubic dimension $p$ whose vertices all have their first
  $n-p$ coordinates equal to zero. More formally, the graph embedding
  $Q_p\hookrightarrow Q_n$ defined by sending a vertex
  $(u_1,\dots,u_p)$ to $(0,\dots,0,u_1,\dots,u_p)$ induces an
  embedding $X^{p,r} \hookrightarrow X^{n,r}$ of simplicial complexes
  with image $\{0\}^{n-p} \times X^{p,r}$. The $i$-simplices of cubic
  dimension $p$ in $\{0\}^{n-p} \times X^{p,r}$ span a subspace $V$ of
  $C^{n,r}_{i,p}$, and every simplex in the $\Bbbk$-basis of
  $C^{n,r}_{i,p}$ is in the $\mathfrak{H}_n$-orbit of a simplex in
  this $\Bbbk$-basis of $V$.
\item An element $(s,\sigma) \in \mathfrak{H}_n$ stabilizes the set of
  $i$-simplices of cubic dimension $p$ in $\{0\}^{n-p} \times X^{p,r}$
  if and only if it stabilizes the set of vertices
  $\{0\}^{n-p}\times \{0,1\}^p$ of this subcomplex.  The stabilizer of
  $\{0\}^{n-p}\times \{0,1\}^p$ consists of the elements
  $(s,\sigma) \in \mathfrak{H}_n$ such that
  $s\in \{0\}^{n-p} \times \mathbb{Z}_2^p$ and
  $\sigma (\{1,\dots,n-p\}) = \{1,\dots,n-p\}$. It is isomorphic to
  $\mathfrak{S}_{n-p} \times \mathfrak{H}_p$.
\item As a representation of
  $\mathfrak{S}_{n-p} \times \mathfrak{H}_p$,
  $V \cong \{n-p\} \otimes C^{p,r}_{i,p}$.
\item As a representation of $\mathfrak{H}_n$,
  \begin{equation}\label{eq:10}
    C^{n,r}_{i,p} \cong
    \operatorname{Ind}^{\mathfrak{H}_n}_{\mathfrak{S}_{n-p} \times \mathfrak{H}_p}
    \left( \{n-p\} \otimes C^{p,r}_{i,p} \right).
  \end{equation}
\end{enumerate}

\begin{example}
  The 1-simplices of cubic dimension 1 in $X^{n,2}$ are in the
  $\mathfrak{H}_n$-orbit of $[0^n,0^{n-1}1]$, which comes from the
  image of the embedding $Q_1 = [0,1] \hookrightarrow Q_n$ defined by
  $(u)\mapsto (0,\dots,0,u)$. Thus, we have
  $C^{1,2}_{1,1} = \langle [0,1]\rangle$ and
  \begin{equation*}
    C^{n,2}_{1,1} \cong
    \operatorname{Ind}^{\mathfrak{H}_n}_{\mathfrak{S}_{n-1} \times \mathfrak{H}_1}
    \langle [0^n,0^{n-1}1]\rangle \cong
    \operatorname{Ind}^{\mathfrak{H}_n}_{\mathfrak{S}_{n-1} \times \mathfrak{H}_1}
    \left( \{n-1\} \otimes \langle [0,1]\rangle \right).
  \end{equation*}
  Note that this is also the description of $C_1 (X^{n,1})$ we gave in
  \Cref{eq:5}, where all 1-simplices have cubic dimension exactly 1.

  The 1-simplices of cubic dimension 2 in $X^{n,2}$ are in the
  $\mathfrak{H}_n$-orbit of $[0^n,0^{n-2}11]$ or
  $[0^{n-1}1,0^{n-2}10]$, which are the images of the diagonals of the
  square under the embedding $Q_2 \hookrightarrow Q_n$ defined by
  $(u_1,u_2)\mapsto (0,\dots,0,u_1,u_2)$. Thus, we have
  $C^{2,2}_{1,2} = \langle [00,11], [01,10]\rangle$ and
  \begin{equation*}
    C^{n,2}_{1,2} \cong
    \operatorname{Ind}^{\mathfrak{H}_n}_{\mathfrak{S}_{n-2} \times \mathfrak{H}_2}
    \langle [0^n,0^{n-2}11], [0^{n-1}1,0^{n-2}10]\rangle \cong
    \operatorname{Ind}^{\mathfrak{H}_n}_{\mathfrak{S}_{n-2} \times \mathfrak{H}_2}
    \left( \{n-2\} \otimes \langle [00,11],[01,10]\rangle \right).
  \end{equation*}

  Since, every 1-simplex in $X^{n,2}$ has cubic dimension 1 or 2, we
  have an isomorphism of $\mathfrak{H}_n$-representations
  $C_1 (X^{n,2}) \cong C^{n,2}_{1,1} \oplus C^{n,2}_{1,2}$.
\end{example}

\subsection{First page}
\label{sec:first-page}

In general, we have an isomorphism of representations of $\mathfrak{H}_n$:
\begin{equation*}
  C_i (X^{n,r}) \cong \bigoplus_{p=0}^n C^{n,r}_{i,p},
\end{equation*}
where the summands on the right hand side may be zero. The summand
$C^{n,r}_{i,p}$ is a representation of $\mathfrak{H}_n$ induced from
the subgroup $\mathfrak{S}_{n-p} \times \mathfrak{H}_p$. This mix of
representations induced from different subgroups makes it harder to
compute the homology of $X^{n,r}$ directly from the complex
$C_\bullet (X^{n,r})$. One advantage of working with our spectral
sequence is that summands induced from different subgroups are
separated into different complexes whose homology is easier to
compute.

\begin{proposition}\label{pro:1}
  With the notation of this section, we have an isomorphism
  \begin{equation*}
    E^1_{p,q} \cong
    \operatorname{Ind}^{\mathfrak{H}_n}_{\mathfrak{S}_{n-p} \times \mathfrak{H}_p}
    \left( \{n-p\} \otimes H_{p+q} (C^{p,r}_{p+\bullet,p}) \right)
  \end{equation*}
  of representations of $\mathfrak{H}_n$ for all values of $p$ and
  $q$.
\end{proposition}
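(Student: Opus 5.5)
The plan is to upgrade the isomorphism \Cref{eq:10} from an isomorphism of individual chain groups to an isomorphism of chain complexes, and then use that induction is exact in characteristic zero.

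Fix $p$. By \Cref{eq:9}, the column $E^0_{p,\bullet}$ is the complex $C^{n,r}_{p+\bullet,p}$. Its differential is the boundary map followed by projection away from faces of cubic dimension less than $p$.

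The first step is a concrete decomposition of this complex. For each subset $S\subseteq\{1,\dots,n\}$ with $|S|=n-p$ and each $a\in\mathbb{Z}_2^{S}$, let $C_{S,a}$ be the span of the simplices of cubic dimension exactly $p$ whose vertices all agree with $a$ on the coordinates in $S$. A simplex of cubic dimension exactly $p$ has exactly $n-p$ constant coordinates, so it determines a unique pair $(S,a)$. Hence $C^{n,r}_{i,p}=\bigoplus_{(S,a)} C_{S,a}$ in each degree $i$.

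The key point is that $d^0$ preserves this decomposition. A face of a simplex in $C_{S,a}$ is still constant on $S$ with values $a$. If it has cubic dimension exactly $p$, it lies in $C_{S,a}$; otherwise it is killed in $E^0$. So $E^0_{p,\bullet}=\bigoplus_{(S,a)} C_{S,a}$ as complexes.

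Next, consider the summand with $S=\{1,\dots,n-p\}$ and $a=0$. This is the space $V$ from the discussion before \Cref{eq:10}. Under the embedding $X^{p,r}\hookrightarrow X^{n,r}$, it is identified as a complex with $C^{p,r}_{p+\bullet,p}$, with differential equal to the $E^0$ differential for $n=p$. The stabilizer $\mathfrak{S}_{n-p}\times\mathfrak{H}_p$ preserves this subcomplex. The factor $\mathfrak{S}_{n-p}$ acts trivially: it permutes coordinates that are all zero on every vertex, so it fixes each oriented simplex. The factor $\mathfrak{H}_p$ acts through the natural action on $X^{p,r}$. So $V\cong\{n-p\}\otimes C^{p,r}_{p+\bullet,p}$ as complexes of representations.

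The group $\mathfrak{H}_n$ permutes the pairs $(S,a)$ transitively, and $\mathfrak{S}_{n-p}\times\mathfrak{H}_p$ is the stabilizer of the pair $(\{1,\dots,n-p\},0)$. By the standard characterization of induced modules, the map
\[
\Bbbk[\mathfrak{H}_n]\otimes_{\Bbbk[\mathfrak{S}_{n-p}\times\mathfrak{H}_p]}V\to E^0_{p,\bullet}
\]
is an isomorphism in each degree; this recovers \Cref{eq:10}. It commutes with differentials because $d^0$ is $\mathfrak{H}_n$-equivariant. So it is an isomorphism of complexes
\[
E^0_{p,\bullet}\cong\operatorname{Ind}^{\mathfrak{H}_n}_{\mathfrak{S}_{n-p}\times\mathfrak{H}_p}\bigl(\{n-p\}\otimes C^{p,r}_{p+\bullet,p}\bigr).
\]

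Finally, take homology. Induction $\Bbbk[\mathfrak{H}_n]\otimes_{\Bbbk[K]}-$ is exact, since $\Bbbk[\mathfrak{H}_n]$ is free over $\Bbbk[K]$, so it commutes with homology. Tensoring over $\Bbbk$ with the one-dimensional representation $\{n-p\}$ also commutes with homology. This gives $E^1_{p,q}=H_{p+q}(E^0_{p,\bullet})$ as stated.

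I expect the main obstacle to be the compatibility of differentials, namely checking that $d^0$ does not mix the summands $C_{S,a}$. The check above, that faces either keep cubic dimension $p$ with the same constant coordinates or drop in cubic dimension, settles it. Orientation signs cause no trouble, because the $\mathfrak{S}_{n-p}$-action does not reorder vertices.
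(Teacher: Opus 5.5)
Your proof is correct and follows the paper's route. Like the paper, you upgrade the degreewise isomorphism of \Cref{eq:10} to an isomorphism of complexes, then use that induction (tensoring with the free module $\Bbbk[\mathfrak{H}_n]$) is exact and so commutes with homology. Your splitting $E^0_{p,\bullet}=\bigoplus_{(S,a)}C_{S,a}$ into subcomplexes indexed by the $p$-dimensional faces of the cube makes explicit the compatibility with differentials that the paper states in one line.
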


\begin{proof}
  By \Cref{eq:9,eq:10}, we have an isomorphism
  \begin{equation}\label{eq:11}
    E^0_{p,q} \cong
    \operatorname{Ind}^{\mathfrak{H}_n}_{\mathfrak{S}_{n-p} \times \mathfrak{H}_p}
    \left( \{n-p\} \otimes C^{p,r}_{p+q,p} \right)
  \end{equation}
  of representations of $\mathfrak{H}_n$. Using the isomorphism
  between representations and modules over the group algebra (see
  \cite[p.~843]{zbMATH01970438}), the right hand side is the object
  resulting from the application of the functor
  \begin{equation}\label{eq:12}
    \begin{split}
      \Bbbk [\mathfrak{H}_p]\text{-}\mathrm{mod}
      &\longrightarrow \Bbbk [\mathfrak{H}_n] \text{-} \mathrm{mod}\\
      - &\longmapsto \operatorname{Ind}^{\mathfrak{H}_n}_{\mathfrak{S}_{n-p} \times \mathfrak{H}_p}
          \left( \{n-p\} \otimes - \right)
    \end{split}
  \end{equation}
  to $C^{p,r}_{p+q,p}$. As $q$ varies, the representations
  $C^{p,r}_{p+q,p}$ form a chain complex $C^{p,r}_{p+\bullet,p}$ with
  maps coming from the restriction of the boundary map of
  $X^{p,r}$. Since the boundary map commutes with the action of
  $\mathfrak{H}_p$ by definition, it follows that
  $C^{p,r}_{p+\bullet,p}$ is a complex of
  $\Bbbk [\mathfrak{H}_p]$-modules and the isomorphism in \Cref{eq:11}
  extends to an isomorphism of complexes
  \begin{equation*}
    E^0_{p,\bullet} \cong
    \operatorname{Ind}^{\mathfrak{H}_n}_{\mathfrak{S}_{n-p} \times \mathfrak{H}_p}
    \left( \{n-p\} \otimes C^{p,r}_{p+\bullet,p} \right).
  \end{equation*}
  Thinking of induction from
  $\mathfrak{S}_{n-p} \times \mathfrak{H}_p$ to $\mathfrak{H}_n$ as
  the tensor product
  $\Bbbk [\mathfrak{H}_n] \otimes_{\Bbbk [\mathfrak{S}_{n-p} \times
    \mathfrak{H}_p]} -$, we notice that $\Bbbk [\mathfrak{H}_n]$ is a
  free, hence flat, left
  $\Bbbk [\mathfrak{S}_{n-p} \times \mathfrak{H}_p]$-module. This is
  always the case when inducing from a subgroup of a finite group (see
  \cite[Section 10A, Section 10 Exercise 20]{zbMATH03736029}) and a
  concrete example can be seen in \Cref{eq:1}.  Thus, the functor in
  \Cref{eq:12} is exact, so it commutes with homology. Therefore, the
  terms in the first page of the spectral sequence are given by
  \begin{equation*}
    E^1_{p,q} = H_{p+q} (E^0_{p,\bullet}) \cong
    \operatorname{Ind}^{\mathfrak{H}_n}_{\mathfrak{S}_{n-p} \times \mathfrak{H}_p}
    \left( \{n-p\} \otimes H_{p+q} (C^{p,r}_{p+\bullet,p}) \right)
  \end{equation*}
  as desired.
\end{proof}

Conceptually, \Cref{pro:1} says that computing the homology of
$X^{n,r}$ for an arbitrary $n$ reduces to computing the homology of
the complexes $C^{p,r}_{p+\bullet,p}$ for $0\leqslant p\leqslant n$,
which is coming from the homology of the subcomplex
$\{0\}^{n-p} \times X^{p,r}$.

\subsection{Second page}
\label{sec:second-page}

The first page of the spectral sequence contains horizontal complexes
$(E^1_{\bullet, q},d^1_{\bullet,q})$ with $q\in \mathbb{Z}$, and the
second page is formed by computing the homology of these complexes.
As seen in later sections, typically only the horizontal complex with
$q=0$ is nonzero, although there may be other sporadic nonzero
terms. The complex $(E^1_{\bullet, 0},d^1_{\bullet,0})$ is closely
related to a truncation of the cellular chain complex of the
$n$-dimensional unit cube $[0,1]^n$ considered as a regular CW-complex
with cubical cells.

We proceed to describe an action of $\mathfrak{H}_n$ on the cellular
chain complex $(\mathcal{F}_\bullet, \partial_\bullet)$ of
$[0,1]^n$. Ordering the vertices of a simplicial complex gives an
orientation of the simplices. We saw in \Cref{sec:chain-complex} that
the $\mathfrak{H}_n$-action on the simplicial complex $X^{n,1}$ may
stabilize a simplex as a set but change its orientation. In contrast,
the cells of a CW-complex do not come with a predetermined
orientation. In fact, an orientation of a cell $f$ in a CW-complex is
merely a choice between using $f$ and $-f$ as a generator of the group
of cellular chains with $\mathbb{Z}$-coefficients (see \cite[Section
3.B]{zbMATH02103273}). We will show that the $\mathfrak{H}_n$-action
on $[0,1]^n$ affects the orientation of cells in $\mathcal{F}_\bullet$
in a way that is inductively determined by the differentials
$\partial_\bullet$.

\begin{proposition}\label{lem:1}
  The cellular cell complex $(\mathcal{F}_\bullet, \partial_\bullet)$
  of the unit $n$-cube $[0,1]^n$ is a complex of
  $\mathfrak{H}_n$-representations with
  \begin{equation*}
    \begin{split}
      \mathcal{F}_i
      &\cong \operatorname{Ind}^{\mathfrak{H}_n}_{\mathfrak{S}_{n-i} \times \mathfrak{H}_i}
        (\{n-i\} \otimes \{0;1^i\})\\
      &\cong \left(
        \bigoplus_{j=0}^{n-i} \{n-i-j;(j+1,1^{i-1})\}
        \right)
        \oplus
        \left(
        \bigoplus_{j=1}^{n-i} \{n-i-j;(j,1^i)\}
        \right).
    \end{split}
  \end{equation*}
\end{proposition}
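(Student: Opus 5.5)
Our plan is to mirror the template of \Cref{sec:homol-as-repr} and \Cref{sec:chain-complex}, now for the $i$-dimensional faces of $[0,1]^n$. First we make the action explicit. A cell of $[0,1]^n$ is a product $I_1\times\dots\times I_n$ with each $I_k\in\{\{0\},\{1\},[0,1]\}$. An element $(s,\sigma)$ permutes the factors by $\sigma$ and applies the reflection $t\mapsto 1-t$ in each coordinate where $s_k=1$. This is a cellular homeomorphism, so it induces a chain automorphism of $\mathcal{F}_\bullet$ commuting with $\partial_\bullet$; hence $\mathcal{F}_\bullet$ is a complex of $\mathfrak{H}_n$-representations.

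To pin down signs concretely, we orient the cell with free coordinates $k_1<\dots<k_i$ by the ordered frame $(e_{k_1},\dots,e_{k_i})$. Then $(s,\sigma)$ sends this cell to $\pm$ the image cell, with sign equal to the determinant of the induced linear map on the free coordinates. That determinant is the sign of the permutation of free coordinates times $(-1)^{\#\{\text{reflected free coordinates}\}}$. The standard cubical boundary formula is equivariant for these signs, since it is the boundary of a product orientation. This is the point that the remark about orientations before the statement alludes to, and it is the step we expect to require the most care.

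Next we identify the orbit and stabilizer. The $i$-cells form a single orbit, containing $f=\{0\}^{n-i}\times[0,1]^i$. Its stabilizer consists of the $(s,\sigma)$ with $s\in\{0\}^{n-i}\times\mathbb{Z}_2^i$ and $\sigma$ preserving $\{1,\dots,n-i\}$, which is $\mathfrak{S}_{n-i}\times\mathfrak{H}_i$. By the sign rule above, $\mathfrak{S}_{n-i}$ acts trivially on $\langle f\rangle$. The factor $\mathfrak{H}_i$ acts by the character $(s,\tau)\mapsto \operatorname{sgn}(\tau)(-1)^{|s|}$, which is the one-dimensional representation $\{0;1^i\}$. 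To justify this, we would check against the construction in \Cref{sec:constr-dimens}: $\{0;\mu\}$ is $\{\mu\}$ twisted by the character that is $-1$ on each sign change. Hence $\langle f\rangle\cong\{n-i\}\otimes\{0;1^i\}$. Since the $i$-cells are a basis permuted up to sign, the same argument as in \Cref{eq:2} gives the first isomorphism, $\mathcal{F}_i\cong\operatorname{Ind}^{\mathfrak{H}_n}_{\mathfrak{S}_{n-i}\times\mathfrak{H}_i}(\{n-i\}\otimes\{0;1^i\})$.

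For the decomposition, we follow the scale-one computation. By transitivity, we first induce through $\mathfrak{H}_{n-i}\times\mathfrak{H}_i$, and by \Cref{eq:3} we replace $\operatorname{Ind}_{\mathfrak{S}_{n-i}}^{\mathfrak{H}_{n-i}}\{n-i\}$ with $\bigoplus_{j=0}^{n-i}\{n-i-j;j\}$. Each term $\operatorname{Ind}^{\mathfrak{H}_n}_{\mathfrak{H}_{n-i}\times\mathfrak{H}_i}(\{n-i-j;j\}\otimes\{0;1^i\})$ is then expanded by the Littlewood--Richardson rule of \cite[Theorem III.2]{zbMATH03590472}. This rule multiplies the first partitions, here $(n-i-j)$ with the empty partition, and the second partitions, here $(j)$ with $(1^i)$. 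The Pieri rule gives $(j)\cdot(1^i)=(j+1,1^{i-1})+(j,1^i)$, where the second summand occurs only for $j\geqslant1$ (for $i=0$ the first is just $(j)$). Collecting these terms yields exactly the two displayed sums.
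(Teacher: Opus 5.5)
Your proposal is correct, but it establishes the key step, $\langle f\rangle\cong\{n-i\}\otimes\{0;1^i\}$, by a different route from the paper.

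\textbf{The paper's route.} It lets $\mathfrak{H}_n$ permute the $i$-cells and defines $\partial_i$ on the other cells by equivariance. It then finds the character on $f=\{0\}^{n-i}\times[0,1]^i$ by induction on $i$. On the generators $(0,(j\ j{+}1))$ and $(e_j,\operatorname{id})$ of $\mathfrak{S}_{n-i}\times\mathfrak{H}_i$, it checks that $\partial_i(f)$ is multiplied by the correct sign. This uses the inductive hypothesis on the faces $f^0_k,f^1_k$. Finally it transfers the result from $\partial_i(f)$ back to $f$.

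\textbf{Your route.} You let $(s,\sigma)$ act as a cellular homeomorphism of $[0,1]^n$. You then read off the sign directly as the determinant $\operatorname{sgn}(\tau)(-1)^{|s|}$ of the induced linear map on the free coordinates.

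\textbf{What each route buys.} Yours gets two things for free:
\begin{itemize}
\item the action on $\mathcal{F}_\bullet$ is well defined and commutes with $\partial_\bullet$ by functoriality of cellular chains, so you need not extend $\partial_i$ equivariantly (a step whose consistency the paper leaves implicit);
\item the character comes out in one stroke, with no induction.
\end{itemize}
The paper's route instead stays inside the explicit boundary formula. It shows that the signs are forced by the differentials, which is the viewpoint exploited later in \Cref{pro:5}.

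The two constructions give the same complex. With your frame orientations and the outward-normal-first convention, the boundary of $f$ is $\sum_{m=1}^{i}(-1)^m(f^0_{n-i+m}-f^1_{n-i+m})$, which is exactly the paper's formula. Your decomposition step (transitivity through $\mathfrak{H}_{n-i}\times\mathfrak{H}_i$, then \Cref{eq:3}, then Pieri) is the same as the paper's.

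\textbf{A small correction for $i=0$.} Your parenthetical about this case is slightly off. There $(j)\cdot\varnothing=(j)$ has a single term, so $\mathcal{F}_0\cong\bigoplus_{j=0}^n\{n-j;j\}$ is just \Cref{eq:3}. The displayed two-sum formula only makes sense for $i\geqslant 1$, which is why the paper handles $\mathcal{F}_0$ separately as its base case.
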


\begin{proof}
  Consider the (closed) $i$-cell
  \begin{equation*}
    f = \{0\}^{n-i} \times [0,1]^i.
  \end{equation*}
  For $j\in \{n-i+1,\dots,n\}$, the $j$-th front face of $f$ is
  \begin{equation*}
    f^0_j = \{x\in f \,|\, x_j=0\}
  \end{equation*}
  and the $j$-th back face of $f$ is
  \begin{equation*}
    f^1_j = \{x\in f \,|\, x_j=1\};
  \end{equation*}
  these are both (closed) $(i-1)$-cells in $[0,1]^n$. Following
  \cite[Section 7.2, Definition 2.3]{zbMATH00049719}, the differential
  $\partial_i$ applied to $f$ gives:
  \begin{equation*}
    \partial_i (f) = \sum_{j=n-i+1}^n (-1)^{j-(n-i)} (f^0_j - f^1_j).
  \end{equation*}
  Notice that $\mathfrak{H}_n$ acts by permuting the $i$-cells of
  $[0,1]^n$; in fact, all $i$-cells are in the $\mathfrak{H}_n$-orbit
  of $f$. Thus, $\mathcal{F}_i$ is a representation of
  $\mathfrak{H}_n$ and we can extend the definition of $\partial_i$ to
  other cells by letting
  \begin{equation*}
    \partial_i \left( (s,\sigma) f \right)
    = (s,\sigma) \partial_i (f)
  \end{equation*}
  for every $(s,\sigma) \in \mathfrak{H}_n$, which makes $\partial_i$
  a $\Bbbk [\mathfrak{H}_n]$-linear map.

  Next, we determine the structure of the terms $\mathcal{F}_i$ as
  $\mathfrak{H}_n$-representations.  Let us proceed by induction on
  the dimension of the cells.  The 0-cells of $[0,1]^n$ are also the
  vertices of $X^{n,r}$. As seen in \Cref{sec:scale-zero} for
  $X^{n,0}$, all vertices are in the $\mathfrak{H}_n$-orbit of
  $[0^n]$, which is stabilized by the subgroup
  $\mathfrak{S}_n$. Therefore, given that $\mathfrak{H}_n$ acts simply
  by permuting these vertices, we have
  \begin{equation*}
    \mathcal{F}_0 \cong \operatorname{Ind}^{\mathfrak{H}_n}
    _{\mathfrak{S}_n} \{n\}
    \cong \bigoplus_{i=0}^n \{n-i;i\},
  \end{equation*}
  which covers the base case.

  The $i$-cells of $[0,1]^n$ are in the $\mathfrak{H}_n$-orbit of
  $f = \{0\}^{n-i} \times [0,1]^i$, which is stabilized by the
  subgroup $\mathfrak{S}_{n-i} \times \mathfrak{H}_i$. Applying the
  differential, we get
  \begin{equation}\label{eq:13}
    \partial_i (f) = (-1)^{n-i}\sum_{j=n-i+1}^n (-1)^j (f^0_j - f^1_j).
  \end{equation}
  We show that $\partial_i (f)$ spans a representation of
  $\mathfrak{S}_{n-i} \times \mathfrak{H}_i$ isomorphic to
  $\{n-i\} \otimes \{0;1^i\}$. Since $\partial_i$ is
  $\Bbbk [\mathfrak{H}_n]$-linear, this will imply that $f$ also spans
  a representation isomorphic to $\{n-i\} \otimes \{0;1^i\}$.  Because
  adjacent transpositions generate the symmetric group
  \cite[Proposition 1.5.4]{zbMATH02186170}, we have that, as a
  subgroup of $\mathfrak{H}_n$,
  $\mathfrak{S}_{n-i} \times \mathfrak{H}_i$ is generated by the
  elements
  \begin{enumerate}
  \item $(0, (j\ j+1))$ for $j\in \{1,\dots,n-i-1\}$,
  \item $(0, (j\ j+1))$ for $j\in \{n-i+1,\dots,n-1\}$,
  \item $(e_j,\operatorname{id})$ for $j\in \{n-i+1,\dots,n\}$,
  \end{enumerate}
  where $e_j \in \mathbb{Z}_2^n$ is the $j$-th unit vector.  By
  definition, $\{n-i\} \otimes \{0;1^i\}$ is a 1-dimensional vector
  space on which the generators of
  $\mathfrak{S}_{n-i} \times \mathfrak{H}_i$ of the first type act
  trivially, while the generators of the second and third type act by
  a change of sign (see \Cref{sec:constr-dimens}). We will show that
  this is exactly the case for the span of $\partial_i (f)$ using
  \Cref{eq:13}. The generators of the first type act trivially on
  $\partial_i (f)$ because they only permute the first $n-i$
  coordinates which are all zero. For the second set of generators,
  observe that
  \begin{equation*}
    \begin{split}
      (0, (j\ j+1)) &\left[ (-1)^j (f^0_j - f^1_j) + (-1)^{j+1} (f^0_{j+1} - f^1_{j+1}) \right]\\
      = &\left[ (-1)^j (f^0_{j+1} - f^1_{j+1}) + (-1)^{j+1} (f^0_j - f^1_j) \right]\\
      = &-\left[ (-1)^j (f^0_j - f^1_j) + (-1)^{i+1} (f^0_{i+1} - f^1_{i+1}) \right];
    \end{split}
  \end{equation*}
  meanwhile, for $k\neq j,j+1$, we have
  \begin{equation*}
    (0, (j\ j+1)) (f^0_k - f^1_k) = -f^0_k + f^1_k = -(f^0_k - f^1_k)
  \end{equation*}
  because $(0, (j\ j+1))$ acts by a change of sign on these
  $(i-1)$-cells by inductive hypothesis. Therefore,
  $(0, (j\ j+1)) \partial_i (f) = -\partial_i (f)$.  For the
  generators of the third kind, note that
  \begin{equation*}
    (e_j,\operatorname{id}) (f^0_j - f^1_j) = (f^1_j - f^0_j) = -(f^0_j - f^1_j);
  \end{equation*}
  meanwhile, for $k\neq j$, we have
  \begin{equation*}
    (e_j,\operatorname{id}) (f^0_k - f^1_k) = -f^0_k + f^1_k = -(f^0_k - f^1_k)
  \end{equation*}
  because $(e_j,\operatorname{id})$ acts by a change of sign on these
  $(i-1)$-cells by inductive hypothesis. Therefore,
  $(e_j,\operatorname{id}) \partial_i (f) = -\partial_i (f)$.  This
  concludes the proof of the inductive step, showing that
  $\mathcal{F}_i$ is isomorphic to the induced representation in the
  statement of the proposition.

  Finally, the decomposition of $\mathcal{F}_i$ into irreducible
  representations can be obtained with the same reasoning employed at
  the end of \Cref{sec:chain-complex}. First, we induce from
  $\mathfrak{S}_{n-i}$ to $\mathfrak{H}_{n-i}$ using \cite[Lemma
  III.6]{zbMATH03590472}, which replaces the left factor in the tensor
  product with $\bigoplus_{j=0}^{n-i} \{n-i-j;j\}$. Second, we induce
  the products $\{n-i-j;j\} \otimes \{0;1^i\}$ to $\mathfrak{H}_n$
  using \cite[Theorem III.2]{zbMATH03590472}. In this case, according
  to the Pieri rule \cite[Section 2.2]{zbMATH01001729}, the partitions
  $(j)$ and $(1^i)$ after the semicolons can either be added entrywise
  to give $(j+1,1^{i-1})$ or combined to give $(j,1^i)$.
\end{proof}

In \Cref{pro:5}, we show that the cellular chain complex of the unit
$n$-cube is essentially uniquely determined by the
representation-theoretic structure described in \Cref{lem:1}.

Now, we describe the homology of the truncations of the cellular chain
complex of the unit $n$-cube.  For $i \in \{0,1,\dots,n\}$, denote
$(\mathcal{F}_{\leqslant i})_\bullet$ the truncation of
$(\mathcal{F}_\bullet, \partial_\bullet)$ in homological degree $i$,
i.e., the complex whose terms are defined by
\begin{equation*}
  (\mathcal{F}_{\leqslant i})_j =
  \begin{cases}
    \mathcal{F}_j, & 0\leqslant j\leqslant i,\\
    0, & \text{otherwise},
  \end{cases}
\end{equation*}
and with the same differentials
$\partial_j \colon \mathcal{F}_j \to \mathcal{F}_{j-1}$ between
nonzero terms.

\begin{proposition}\label{pro:2}
  For every $i \in \{1,\dots,n\}$, the nonzero homology groups of
  $(\mathcal{F}_{\leqslant i})_\bullet$ are
  \begin{equation*}
    H_0 ((\mathcal{F}_{\leqslant i})_\bullet) \cong \{n;0\},
    \qquad
    H_i ((\mathcal{F}_{\leqslant i})_\bullet) \cong
    \bigoplus_{j=1}^{n-i} \{n-i-j; (j,1^i)\}
  \end{equation*}
  as representations of $\mathfrak{H}_n$.
\end{proposition}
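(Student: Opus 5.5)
The plan is to use that the full cellular complex $(\mathcal{F}_\bullet,\partial_\bullet)$ of $[0,1]^n$ computes the homology of a contractible space. Its only nonzero homology is $H_0(\mathcal{F}_\bullet)\cong\Bbbk$, which as an $\mathfrak{H}_n$-representation is the trivial representation $\{n;0\}$: the augmentation $\mathcal{F}_0\to\Bbbk$ sending each vertex to $1$ is $\mathfrak{H}_n$-equivariant and induces the isomorphism on $H_0$. The truncation $(\mathcal{F}_{\leqslant i})_\bullet$ agrees with $\mathcal{F}_\bullet$ in degrees $<i$, and in degree $i$ its homology is $\ker\partial_i$ instead of $\ker\partial_i/\operatorname{im}\partial_{i+1}$. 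Hence for $1\leqslant j<i$ we get $H_j((\mathcal{F}_{\leqslant i})_\bullet)=H_j(\mathcal{F}_\bullet)=0$, and $H_0((\mathcal{F}_{\leqslant i})_\bullet)\cong\{n;0\}$ for $i\geqslant 1$. Exactness of $\mathcal{F}_\bullet$ at $\mathcal{F}_i$ gives
\[
H_i((\mathcal{F}_{\leqslant i})_\bullet)=\ker\partial_i=\operatorname{im}\partial_{i+1}\cong \mathcal{F}_{i+1}/\ker\partial_{i+1}.
\]
All of these are equivariant isomorphisms, because $\partial_\bullet$ is $\Bbbk[\mathfrak{H}_n]$-linear by \Cref{lem:1}.

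To identify $\ker\partial_i$ I will proceed by descending induction on $i$, from $i=n$ down to $i=1$, using semisimplicity. The case $i=n$ is immediate: $\partial_n$ is injective, since $H_n([0,1]^n)=0$, so $H_n((\mathcal{F}_{\leqslant n})_\bullet)=0$, which matches the empty sum $\bigoplus_{j=1}^{0}$. For the inductive step, Maschke's theorem gives $\mathcal{F}_{i+1}\cong \ker\partial_{i+1}\oplus\operatorname{im}\partial_{i+1}$ as representations. Therefore $\ker\partial_i=\operatorname{im}\partial_{i+1}$ is the complement of $\ker\partial_{i+1}$ in $\mathcal{F}_{i+1}$.

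Now I plug in the decomposition of \Cref{lem:1}. By induction, $\ker\partial_{i+1}\cong\bigoplus_{j=1}^{n-i-1}\{n-i-1-j;(j,1^{i+1})\}$. Reindexing with $j'=j+1$ turns this into $\bigoplus_{j'=2}^{n-i}\{n-i-j';(j',1^{i})\}$. On the other hand,
\[
\mathcal{F}_{i+1}\cong\bigoplus_{j=0}^{n-i-1}\{n-i-1-j;(j+1,1^{i})\}\oplus\bigoplus_{j=1}^{n-i-1}\{n-i-1-j;(j,1^{i+1})\},
\]
and the first sum, after the same reindexing, equals $\bigoplus_{j'=1}^{n-i}\{n-i-j';(j',1^{i})\}$. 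Removing the second sum, which is $\ker\partial_{i+1}$, leaves exactly the claimed $\bigoplus_{j=1}^{n-i}\{n-i-j;(j,1^i)\}$.

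The cancellation is justified by uniqueness of isotypic decompositions: $\mathcal{F}_{i+1}$ is multiplicity-free, as the two families of bipartitions are visibly distinct. So the complement of a subrepresentation is determined up to isomorphism by removing the corresponding irreducible summands.

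The step I expect to need the most care is this bookkeeping: making sure the two families in \Cref{lem:1} do not overlap, so that the complement is unambiguous. The bipartitions $(j+1,1^{i})$ and $(j',1^{i+1})$ of the same total size $j+i+1$ differ in the length of $\mu$, so they never coincide. I also need to check the edge case $i=n-1$, where $\ker\partial_n=0$ and the formula gives the single summand $\{0;(1,1^{n-1})\}=\{0;1^n\}$, which is consistent.
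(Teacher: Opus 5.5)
Your proposal is correct and follows essentially the same route as the paper: exactness of the cellular complex of the contractible cube, then a descending induction on $i$ that peels off irreducible summands using the decomposition in \Cref{lem:1}, then reading off the homology of the truncation (you track $\ker\partial_i$ where the paper tracks $\operatorname{im}\partial_i$, and you get $H_0$ from the equivariant augmentation rather than from the decomposition of $\operatorname{im}\partial_1$). One slip to fix: reindexing $\bigoplus_{j=1}^{n-i-1}\{n-i-1-j;(j,1^{i+1})\}$ by $j'=j+1$ gives summands $\{n-i-j';(j'-1,1^{i+1})\}$, not $\{n-i-j';(j',1^{i})\}$; this sentence plays no role, though, since you then correctly remove the second family in $\mathcal{F}_{i+1}$, which is exactly the inductive hypothesis.
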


\begin{proof}
  The unit $n$-cube $[0,1]^n$ is contractible so the homology of its
  cellular chain complex is zero, except for
  $H_0 (\mathcal{F}_\bullet)$ which is 1-dimensional. We can use this
  information to show that
  \begin{equation}\label{eq:14}
    \operatorname{im} \partial_i \cong
    \bigoplus_{j=0}^{n-i} \{n-i-j; (j+1,1^{i-1})\}
  \end{equation}
  as a representation of $\mathfrak{H}_n$. We proceed by decreasing
  induction on $i$. For the base case, let $i=n$. Since
  $H_n (\mathcal{F}_\bullet) = 0$, the differential $\partial_n$ is
  injective. Hence, we have
  \begin{equation*}
    \operatorname{im} \partial_n \cong \mathcal{F}_n \cong \{0;1^n\},
  \end{equation*}
  which matches \Cref{eq:14}. For the inductive step, assume
  \Cref{eq:14} holds for $i+1\leqslant n$; we show it holds for
  $i$. Since $H_i (\mathcal{F}_\bullet) = 0$, the inductive hypothesis
  implies that
  \begin{equation}\label{eq:15}
    \ker \partial_i = \operatorname{im} \partial_{i+1} \cong
    \bigoplus_{j=0}^{n-i-1} \{n-i-1-j; (j+1,1^i)\}
    \cong \bigoplus_{j=1}^{n-i} \{n-i-j; (j,1^i)\}.
  \end{equation}
  Decomposing $\mathcal{F}_i$ into irreducibles representations as in
  \Cref{lem:1}, we deduce that
  \begin{equation*}
    \operatorname{im} \partial_i \cong \mathcal{F}_i / \ker \partial_i
    \cong \bigoplus_{j=0}^{n-i} \{n-i-j;(j+1,1^{i-1})\},
  \end{equation*}
  which proves the isomorphism in \Cref{eq:14}. In particular, since
  \begin{equation*}
    \operatorname{im} \partial_1
    \cong \bigoplus_{j=0}^{n-1} \{n-1-j;j+1\},
  \end{equation*}
  we have an isomorphism
  $H_0 (\mathcal{F}_\bullet) = \mathcal{F}_0 / \operatorname{im}
  \partial_1 \cong \{n;0\}$.

  Now, consider the truncation $(\mathcal{F}_{\leqslant
    i})_\bullet$. The differentials of the truncation are the same as
  the differentials $\partial_j$ of $\mathcal{F}_\bullet$ in the range
  $1\leqslant j\leqslant i$; hence, we have
  $H_0 ((\mathcal{F}_{\leqslant i})_\bullet) \cong \{n;0\}$ and
  $H_j ((\mathcal{F}_{\leqslant i})_\bullet) = 0$ for
  $0<j<i$. Finally, \Cref{eq:15} implies that
  \begin{equation*}
    H_i ((\mathcal{F}_{\leqslant i})_\bullet) \cong \ker \partial_i
    \cong \bigoplus_{j=1}^{n-i} \{n-i-j;(j,1^i)\}.
  \end{equation*}
\end{proof}

\section{Scale two}
\label{sec:scale-two}

We continue with the irreducible decomposition of the homology of
$X^{n,2}$ as an $\mathfrak{H}_n$-representation. As in
\Cref{sec:suppl-filtr}, we separate the computations by the cubic
dimension of the simplices involved.

\subsection{Cubic dimension zero and one}
\label{sec:size-zero-one}

There is a single simplex with cubic dimension $0$ in $X^{0,2}$, i.e., the
$0$-simplex indexed by an empty binary string. Hence, the complex
$C^{0,2}_{\bullet,0}$ has a single nonzero term, namely the
one-dimensional vector space $C^{0,2}_{0,0}$. Note that
$C^{0,2}_{0,0}$ is isomorphic to the irreducible trivial
representation $\{0;0\}$ of the (trivial) group $\mathfrak{H}_0$. It
follows that there is a single nonzero homology group
$H_0 (C^{0,2}_{\bullet,0}) \cong \{0;0\}$.  By \Cref{pro:1}, we
conclude that $E^1_{0,q} = 0$ for all $q\neq 0$, and
\begin{equation*}
  E^1_{0,0} \cong
  \operatorname{Ind}^{\mathfrak{H}_n}_{\mathfrak{S}_{n} \times \mathfrak{H}_0}
  \left( \{n\} \otimes \{0;0\} \right) \cong
  \operatorname{Ind}^{\mathfrak{H}_n}_{\mathfrak{S}_{n}} \{n\}.
\end{equation*}
Using the notation of \Cref{sec:second-page}, we also have
$E^1_{0,0} \cong \mathcal{F}_0$.

Similarly, there is a single simplex with cubic dimension $1$ in
$X^{1,2}$, i.e., the $1$-simplex $[0,1]$. Hence, the complex
$C^{1,2}_{\bullet,1}$ has a single nonzero term, namely the
one-dimensional vector space $C^{1,2}_{1,1}$.  As a representation of
$\mathfrak{H}_1$, $C^{1,2}_{1,1}$ is isomorphic to the irreducible
$\{0;1\}$ because acting on $[0,1]$ with
$(1,\operatorname{id}_{\mathfrak{S}_1})$ sends it to $[1,0]=-[0,1]$.
It follows that there is a single nonzero homology group
$H_1 (C^{1,2}_{\bullet,1}) \cong \{0;1\}$.  By \Cref{pro:1}, we
conclude that $E^1_{1,q} = 0$ for all $q\neq 0$, and
\begin{equation*}
  \begin{split}
    E^1_{1,0}
    \cong
    \operatorname{Ind}^{\mathfrak{H}_n}_{\mathfrak{S}_{n-1} \times \mathfrak{H}_1}
    \left( \{n-1\} \otimes \{0;1\} \right).
  \end{split}
\end{equation*}
Using the notation of \Cref{sec:second-page}, we also have
$E^1_{1,0} \cong \mathcal{F}_1$.

\subsection{Cubic dimension two}
\label{sec:size-two}

The simplices of cubic dimension $2$ in $X^{2,2}$, shown in
\Cref{fig:1}, are as follows.
\begin{itemize}
\item In dimension 1, we have the two diagonals of the square $Q_2$, so
  \begin{equation*}
    C^{2,2}_{1,2} = \langle [00,11], [01,10]\rangle.
  \end{equation*}
\item In dimension 2, we have four triangles inside the square $Q_2$, so
  \begin{equation*}
    C^{2,2}_{2,2} = \langle [00,01,10], [00,01,11], [00,10,11], [01,10,11]\rangle.
  \end{equation*}
\item In dimension 3, we have a single tetrahedron on the vertices of $Q_2$, so
  \begin{equation*}
    C^{2,2}_{3,2} = \langle [00,01,10,11]\rangle.
  \end{equation*}
\end{itemize}

\begin{figure}[htb]
  \centering
  \begin{tikzpicture}[scale=1]
    \tikzset{vertex/.style = {shape=circle,fill,thick,minimum size=3pt,inner sep=0pt,outer sep=0pt}}
    \begin{scope}[shift={(0,0)}]
      \node[vertex] (00) at (0,0) {};
      \node[vertex] (10) at (1,0) {};
      \node[vertex] (01) at (0,1) {};
      \node[vertex] (11) at (1,1) {};
      \draw[densely dotted] (00) -- (10) -- (11) -- (01) -- (00);
      \draw[thick] (00) -- (11);
    \end{scope}
    \begin{scope}[shift={(1.5,0)}]
      \node[vertex] (00) at (0,0) {};
      \node[vertex] (10) at (1,0) {};
      \node[vertex] (01) at (0,1) {};
      \node[vertex] (11) at (1,1) {};
      \draw[densely dotted] (00) -- (10) -- (11) -- (01) -- (00);
      \draw[thick] (01) -- (10);
    \end{scope}
    \begin{scope}[shift={(4,0)}]
      \node[vertex] (00) at (0,0) {};
      \node[vertex] (10) at (1,0) {};
      \node[vertex] (01) at (0,1) {};
      \node[vertex] (11) at (1,1) {};
      \draw[densely dotted] (00) -- (10) -- (11) -- (01) -- (00);
      \draw[thick] (00) -- (10) -- (01) -- (00);
    \end{scope}
    \begin{scope}[shift={(5.5,0)}]
      \node[vertex] (00) at (0,0) {};
      \node[vertex] (10) at (1,0) {};
      \node[vertex] (01) at (0,1) {};
      \node[vertex] (11) at (1,1) {};
      \draw[densely dotted] (00) -- (10) -- (11) -- (01) -- (00);
      \draw[thick] (00) -- (11) -- (01) -- (00);
    \end{scope}
    \begin{scope}[shift={(7,0)}]
      \node[vertex] (00) at (0,0) {};
      \node[vertex] (10) at (1,0) {};
      \node[vertex] (01) at (0,1) {};
      \node[vertex] (11) at (1,1) {};
      \draw[densely dotted] (00) -- (10) -- (11) -- (01) -- (00);
      \draw[thick] (00) -- (11) -- (10) -- (00);
    \end{scope}
    \begin{scope}[shift={(8.5,0)}]
      \node[vertex] (00) at (0,0) {};
      \node[vertex] (10) at (1,0) {};
      \node[vertex] (01) at (0,1) {};
      \node[vertex] (11) at (1,1) {};
      \draw[densely dotted] (00) -- (10) -- (11) -- (01) -- (00);
      \draw[thick] (11) -- (10) -- (01) -- (11);
    \end{scope}
    \begin{scope}[shift={(11,0)}]
      \fill[lightgray] (0,0) -- (1,0) -- (1,1) -- (0,1) -- (0,0);
      \node[vertex] (00) at (0,0) {};
      \node[vertex] (10) at (1,0) {};
      \node[vertex] (01) at (0,1) {};
      \node[vertex] (11) at (1,1) {};
      \draw[thick] (00) -- (10) -- (11) -- (01) -- (00);
      \draw[thick,densely dashed] (01) -- (10);
      \draw[line width=4pt,lightgray] (0.3,0.3) -- (0.7,0.7);
      \draw[thick] (00) -- (11);
    \end{scope}
  \end{tikzpicture}
  \caption{Simplices of cubic dimension 2 in $X^{2,2}$}
  \label{fig:1}
\end{figure}

Thus, the complex $C^{2,2}_{\bullet,2}$ is
\begin{equation*}
  0\leftarrow C^{2,2}_{1,2}
  \xleftarrow{
    \left[
      \begin{smallmatrix}
        0&-1&-1&0\\
        1&0&0&1
      \end{smallmatrix}
    \right]
  }
  C^{2,2}_{2,2}
  \xleftarrow{
    \left[
      \begin{smallmatrix}
        -1\\1\\-1\\1
      \end{smallmatrix}
    \right]
  }
  C^{2,2}_{3,2} \leftarrow 0
\end{equation*}
with differentials obtained by restricting the boundary maps of
$X^{2,2}$ to the simplices of cubic dimension $2$. Since the first matrix has
rank $2$ and the second one has rank $1$, the only nonzero homology is
the $1$-dimensional space $H_2 (C^{2,2}_{\bullet,2})$.  The kernel of
the first matrix can be generated by the columns of
\begin{equation*}
  \left[\begin{smallmatrix}1&0\\0&1\\0&-1\\-1&0\end{smallmatrix}\right].
\end{equation*}
Therefore, we have an explicit presentation
\begin{equation*}
  \begin{split}
    H_2 (C^{2,2}_{\bullet,2}) =
    {}&\langle [00,01,10] - [01,10,11], [00,01,11] - [00,10,11]\rangle\\
    &/ \langle -[00,01,10]+[00,01,11]-[00,10,11]+[01,10,11] \rangle.
  \end{split}
\end{equation*}
The relation implies that the two generators are equal in the
quotient. Therefore, $H_2 (C^{2,2}_{\bullet,2})$ can be generated by
the class of either $[00,01,10] - [01,10,11]$ or
$[00,01,11] - [00,10,11]$.  To determine what
$H_2 (C^{2,2}_{\bullet,2})$ looks like as a representation of
$\mathfrak{H}_2$, we recall the concept of character of a
representation.

Let $V$ be a finite dimensional representation of
$\mathfrak{H}_n$. Every element $(s,\sigma) \in \mathfrak{H}_n$ acts
on $V$ as a linear operator. The function
$\chi_V \colon \mathfrak{H}_n \to \Bbbk$,
$(s,\sigma) \mapsto \operatorname{trace} (s,\sigma)$ is called the
\emph{character} of $V$. Since our field $\Bbbk$ has characteristic
zero, two representations $V$ and $W$ of $\mathfrak{H}_n$ are
isomorphic if and only if $\chi_V = \chi_W$ \cite[Corollary
4.2.4]{zbMATH05943514}.  Since the trace of a linear operator is
invariant under conjugation, characters are constant on each conjugacy
class of $\mathfrak{H}_n$.  The conjugacy classes of $\mathfrak{H}_n$
are in bijection with the bipartitions of $n$, as we now
explain. Paraphrasing from \cite[Section 3]{zbMATH00425590}, we can
determine the conjugacy class of an element
$(s,\sigma) \in \mathfrak{H}_n$ as follows. Decompose the underlying
permutation $\sigma \in \mathfrak{S}_n$ into a product of disjoint
cycles $\gamma_1,\dots,\gamma_m$. A cycle $\gamma = (i_1\dots i_k)$ in
this decomposition is \emph{balanced} if $s_{i_1} + \dots + s_{i_k}$
is even; otherwise, it is \emph{unbalanced}. Let $\alpha$ be the
partition representing the cycle type of the product of the balanced
cycles, and let $\beta$ be the partition representing the cycle type
of the product of the unbalanced cycles; then, $(s,\sigma)$ belongs to
the conjugacy class indexed by the bipartition $(\alpha;\beta)$ of
$n$.  The character table of $\mathfrak{H}_n$ is constructed as
follows.
\begin{itemize}
\item The rows of the table are indexed by the irreducible
  representations $\{\lambda;\mu\}$ of $\mathfrak{H}_n$.
\item The columns of the table are indexed by the conjugacy classes
  $(\alpha;\beta)$ of $\mathfrak{H}_n$.
\item The entry in row $\{\lambda;\mu\}$ and column $(\alpha;\beta)$
  is the character value $\chi_{\{\lambda;\mu\}} (s,\sigma)$ for any
  $(s,\sigma) \in \mathfrak{H}_n$ belonging to the conjugacy class
  indexed by $(\alpha;\beta)$.
\end{itemize}
The character table of $\mathfrak{H}_n$ can be produced using the
Macaulay2 package \texttt{BettiCharacters}
\cite{zbMATH07771377,BettiCharactersSource}, or other dedicated
software such as GAP \cite{GAP4}.

We now go back to describing $H_2 (C^{2,2}_{\bullet,2})$ as a
representation of $\mathfrak{H}_2$. Given that
$H_2 (C^{2,2}_{\bullet,2})$ is $1$-dimensional, it must be an
irreducible representation.  As illustrated in
\Cref{sec:irred-repr-hyper}, $\mathfrak{H}_2$ has five irreducible
representations: $\{2;0\}$, $\{1^2;0\}$, $\{0;2\}$, and $\{0;1^2\}$
are $1$-dimensional, and $\{1;1\}$ is $2$-dimensional. Thus,
$H_2 (C^{2,2}_{\bullet,2})$ is isomorphic to one of $\{2;0\}$,
$\{1^2;0\}$, $\{0;2\}$, or $\{0;1^2\}$, and we can determine which one
exactly by comparing their characters. The reader will find the
character table of $\mathfrak{H}_2$ in \Cref{tab:1}; the integers
above each column count the number of elements in the conjugacy class
for that column.
\begin{table}[htb]
  \centering
  $\begin{array}{c|rrrrr}
    &2&1&2&2&1\\
    &(2;0)&(1^2;0)&(1;1)&(0;2)&(0;1^2)\\ \hline
    \{2;0\}&1&1&1&1&1\\ 
    \{1^2;0\}&-1&1&1&-1&1\\ 
    \{1;1\}&0&2&0&0&-2\\ 
    \{0;2\}&1&1&-1&-1&1\\ 
    \{0;1^2\}&-1&1&-1&1&1\\
  \end{array}$
  \bigskip
  \caption{Character table of $\mathfrak{H}_2$}
  \label{tab:1}
\end{table}
Given the entries of the table, it suffices to look at the columns for
the conjugacy classes $(2;0)$ and $(1;1)$ to distinguish
irreducibles. The elements in the conjugacy class $(2;0)$ have a
single balanced cycle of length $2$, so they are $((0,0),(12))$ and
$((1,1),(12))$. The element $((0,0),(12))$ sends the generator
$[00,01,10] - [01,10,11]$ of $H_2 (C^{2,2}_{\bullet,2})$ to
\begin{equation*}
  [00,10,01] - [10,01,11] = -[00,01,10] + [01,10,11],
\end{equation*}
i.e., its negative. Therefore, the trace of $((0,0),(12))$ on
$H_2 (C^{2,2}_{\bullet,2})$ is $-1$. Similarly, the elements in the
conjugacy class $(1;1)$ have one balanced cycle and one unbalanced
cycle, each of length one, so they are
$((0,1),\operatorname{id}_{\mathfrak{S}_2})$ and
$((1,0),\operatorname{id}_{\mathfrak{S}_2})$. The element
$((0,1),\operatorname{id}_{\mathfrak{S}_2})$ sends the generator
$[00,01,10] - [01,10,11]$ of $H_2 (C^{2,2}_{\bullet,2})$ to
\begin{equation*}
  [01,00,11] - [00,11,10] = -[00,01,11] + [00,10,11]
  =-[00,01,10] + [01,10,11],
\end{equation*}
i.e., its negative. Therefore, the trace of
$((0,1),\operatorname{id}_{\mathfrak{S}_2})$ on
$H_2 (C^{2,2}_{\bullet,2})$ is also $-1$. We deduce that
$H_2 (C^{2,2}_{\bullet,2}) \cong \{0;1^2\}$.  By \Cref{pro:1}, we
conclude that $E^1_{2,q} = 0$ for all $q\neq 0$, and
\begin{equation*}
  E^1_{2,0} \cong
  \operatorname{Ind}^{\mathfrak{H}_n}_{\mathfrak{S}_{n-2} \times \mathfrak{H}_2}
  \left( \{n-2\} \otimes \{0;1^2\} \right).
\end{equation*}
Using the notation of \Cref{sec:second-page}, we also have
$E^1_{2,0} \cong \mathcal{F}_2$.

\subsection{Cubic dimension three}
\label{sec:size-three}

The simplices of cubic dimension $3$ in $X^{3,2}$ are as follows. Some
of them are depicted in \Cref{fig:2}.
\begin{itemize}
\item In dimension 2, we have eight triangles whose interior is
  completely contained in the interior of the cube $Q_3$, so
  \begin{equation*}
    \begin{split}
      C^{3,2}_{2,3} = \langle
      &[000, 011, 101], [000, 011, 110], [000, 101, 110], [001, 010, 100],\\
      &[001, 010, 111], [001, 100, 111], [010, 100, 111], [011, 101, 110]\rangle.
    \end{split}
  \end{equation*}
\item In dimension 3, we have eight tetrahedra with three facets lying
  on the boundary of the cube $Q_3$, and two tetrahedra with facets
  whose interior is completely contained in the interior of $Q_3$, so
  \begin{equation*}
    \begin{split}
      C^{3,2}_{3,3} = \langle
      &[000,001,010,100],[000,001,011,101],[000,010,011,110],\\
      &[000,100,101,110],[001,010,011,111],[001,100,101,111],\\
      &[010,100,110,111],[011,101,110,111] \rangle\\
      {+} \langle &[000,011,101,110],[001,010,100,111] \rangle.
    \end{split}
  \end{equation*}
\end{itemize}

\begin{figure}[htb]
  \centering
  \hfill{}
  \tdplotsetmaincoords{75}{25}
  \begin{minipage}{0.25\textwidth}
    \begin{tikzpicture}[tdplot_main_coords,scale=1]
      \tikzset{vertex/.style = {shape=circle,fill,thick,minimum size=3pt,inner sep=0pt,outer sep=0pt}}
      \fill[lightgray] (0,0,0) -- (0,1,1) -- (1,0,1) -- (0,0,0);
      \node[vertex] (000) at (0,0,0) {};
      \node[vertex] (100) at (1,0,0) {};
      \node[vertex] (010) at (0,1,0) {};
      \node[vertex] (110) at (1,1,0) {};
      \node[vertex] (001) at (0,0,1) {};
      \node[vertex] (101) at (1,0,1) {};
      \node[vertex] (011) at (0,1,1) {};
      \node[vertex] (111) at (1,1,1) {};
      \draw[densely dotted] (000) -- (100) -- (110) -- (010) -- (000);
      \draw[densely dotted] (001) -- (101) -- (111) -- (011) -- (001);
      \draw[densely dotted] (000) -- (001);
      \draw[densely dotted] (100) -- (101);
      \draw[densely dotted] (010) -- (011);
      \draw[densely dotted] (110) -- (111);
      \draw[thick] (000) -- (011) -- (101) -- (000);
    \end{tikzpicture}      
  \end{minipage}
  \begin{minipage}{0.25\textwidth}
    \begin{tikzpicture}[tdplot_main_coords,scale=1]
      \tikzset{vertex/.style = {shape=circle,fill,thick,minimum size=3pt,inner sep=0pt,outer sep=0pt}}
      \fill[lightgray] (0,0,0) -- (0,0,1) -- (1,0,1) -- (0,0,0);
      \fill[lightgray] (0,0,1) -- (0,1,1) -- (1,0,1) -- (0,0,1);
      \node[vertex] (000) at (0,0,0) {};
      \node[vertex] (100) at (1,0,0) {};
      \node[vertex] (010) at (0,1,0) {};
      \node[vertex] (110) at (1,1,0) {};
      \node[vertex] (001) at (0,0,1) {};
      \node[vertex] (101) at (1,0,1) {};
      \node[vertex] (011) at (0,1,1) {};
      \node[vertex] (111) at (1,1,1) {};
      \draw[densely dotted] (000) -- (100) -- (110) -- (010) -- (000);
      \draw[densely dotted] (001) -- (101) -- (111) -- (011) -- (001);
      \draw[densely dotted] (000) -- (001);
      \draw[densely dotted] (100) -- (101);
      \draw[densely dotted] (010) -- (011);
      \draw[densely dotted] (110) -- (111);
      \draw[thick] (001) -- (011) -- (101) -- (001) -- (000) -- (101);
      \draw[thick,densely dashed] (000) -- (011);
    \end{tikzpicture}
  \end{minipage}
  \begin{minipage}{0.25\textwidth}
    \begin{tikzpicture}[tdplot_main_coords,scale=1]
      \tikzset{vertex/.style = {shape=circle,fill,thick,minimum size=3pt,inner sep=0pt,outer sep=0pt}}
      \fill[lightgray] (0,0,0) -- (0,1,1) -- (1,0,1) -- (0,0,0);
      \fill[lightgray] (0,1,1) -- (1,0,1) -- (1,1,0) -- (0,1,1);
      \fill[lightgray] (0,1,1) -- (1,1,0) -- (0,0,0) -- (0,1,1);
      \node[vertex] (000) at (0,0,0) {};
      \node[vertex] (100) at (1,0,0) {};
      \node[vertex] (010) at (0,1,0) {};
      \node[vertex] (110) at (1,1,0) {};
      \node[vertex] (001) at (0,0,1) {};
      \node[vertex] (101) at (1,0,1) {};
      \node[vertex] (011) at (0,1,1) {};
      \node[vertex] (111) at (1,1,1) {};
      \draw[densely dotted] (000) -- (100) -- (110) -- (010) -- (000);
      \draw[densely dotted] (001) -- (101) -- (111) -- (011) -- (001);
      \draw[densely dotted] (000) -- (001);
      \draw[densely dotted] (100) -- (101);
      \draw[densely dotted] (010) -- (011);
      \draw[densely dotted] (110) -- (111);
      \draw[thick] (000) -- (011) -- (101) -- (000) -- (110) -- (101);
      \draw[thick,densely dashed] (110) -- (011);
    \end{tikzpicture}
  \end{minipage}
  \hfill{}
  \caption{Some simplices of cubic dimension 3 in $X^{3,2}$}
  \label{fig:2}
\end{figure}

Thus, the complex $C^{3,2}_{\bullet,3}$ is
\begin{equation*}
  0\leftarrow C^{3,2}_{2,3} \xleftarrow{
    \left[\begin{smallmatrix}
      0&-1&0&0&0&0&0&0&-1&0\\
      0&0&-1&0&0&0&0&0&1&0\\
      0&0&0&-1&0&0&0&0&-1&0\\
      1&0&0&0&0&0&0&0&0&-1\\
      0&0&0&0&1&0&0&0&0&1\\
      0&0&0&0&0&1&0&0&0&-1\\
      0&0&0&0&0&0&1&0&0&1\\
      0&0&0&0&0&0&0&-1&1&0
    \end{smallmatrix}\right]
} C^{3,2}_{3,3} \leftarrow 0  
\end{equation*}
with the differential obtained by restricting the boundary map of
$X^{3,2}$ to the simplices of cubic dimension $3$. As seen by looking
at the first eight columns, this matrix has rank $8$, so the only
nonzero homology is the two-dimensional vector space
$H_3 (C^{3,2}_{\bullet,3})$. The kernel of the matrix can be generated
by the columns of
\begin{equation}\label{eq:16}
  \left[\begin{smallmatrix}
    0&1\\-1&0\\1&0\\-1&0\\0&-1\\0&1\\0&-1\\1&0\\1&0\\0&1
  \end{smallmatrix}\right],
\end{equation}
which are linearly independent.  Therefore, we have a presentation
\begin{equation*}
  \begin{split}
    H_3 (C^{3,2}_{\bullet,3}) = \langle
    &-[000,001,011,101] + [000,010,011,110] - [000,100,101,110]\\
    &+ [011,101,110,111] + [000,011,101,110],\\
    &[000,001,010,100] - [001,010,011,111] + [001,100,101,111]\\
    &- [010,100,110,111] + [001,010,100,111] \rangle.
  \end{split}
\end{equation*}
Throughout the rest of this section, we denote by $x$ and $y$ the two
generators in this presentation. Since they are linearly independent,
$\{x,y\}$ is a basis of $H_3 (C^{3,2}_{\bullet,3})$ as a
$\Bbbk$-vector space.

To understand $H_3 (C^{3,2}_{\bullet,3})$ as a representation of
$\mathfrak{H}_3$, we turn again to characters as in
\Cref{sec:size-two}. Given that $H_3 (C^{3,2}_{\bullet,3})$ is
2-dimensional, it must be an irreducible representation of dimension
2, or a direct sum of two irreducible representations of dimension
1. As a result of \Cref{eq:29}, $\mathfrak{H}_3$ has four
1-dimensional irreducible representations: $\{3;0\}$, $\{1^3;0\}$,
$\{0;3\}$, $\{0;1^3\}$, and two 2-dimensional irreducible
representations: $\{2,1;0\}$, $\{0;2,1\}$.  \Cref{tab:2} (computed
using version 2.6 of the Macaulay2 package \texttt{BettiCharacters})
contains the characters of these six irreducible representations.
\begin{table}[htb]
  \centering
  $\begin{array}{c|rrrrrrrrrr}
    &8&6&1&6&3&6&3&8&6&1\\
    &\left(3;0\right)&\left(2,1;0\right)&\left(1^{3};0\right)
          &\left(2;1\right)&\left(1^{2};1\right)&\left(1;2\right)
                &\left(1;1^{2}\right)&\left(0;3\right)
                    &\left(0;2,1\right)&\left(0;1^{3}\right)\\ \hline
    \{3;0\}&1&1&1&1&1&1&1&1&1&1\\ 
    \{2,1;0\}&-1&0&2&0&2&0&2&-1&0&2\\ 
    \{1^{3};0\}&1&-1&1&-1&1&-1&1&1&-1&1\\ 
    \{0;3\}&1&1&1&-1&-1&-1&1&-1&1&-1\\ 
    \{0;2,1\}&-1&0&2&0&-2&0&2&1&0&-2\\ 
    \{0;1^{3}\}&1&-1&1&1&-1&1&1&-1&-1&-1
  \end{array}$
  \bigskip
  \caption{Partial character table of $\mathfrak{H}_3$}
  \label{tab:2}
\end{table}
Observe that the element $(000,(12)) \in \mathfrak{H}_3$, which
belongs to the conjugacy class $(2,1;0)$, acts by
\begin{equation}\label{eq:17}
  (000,(12))x=-x, \qquad (000,(12))y=-y.
\end{equation}
In other words, the matrix representing the action of $(000,(12))$ on
$H_3 (C^{3,2}_{\bullet,3})$ with respect to the basis $\{x,y\}$ is
\begin{equation*}
  \begin{bmatrix}
    -1&0\\0&-1
  \end{bmatrix}
\end{equation*}
Therefore, the trace of $(000,(12))$ on $H_3 (C^{3,2}_{\bullet,3})$ is
$-2$. Since the characters of $\{2,1;0\}$ and $\{0;2,1\}$ at the
conjugacy class $(2,1;0)$ are both 0, we deduce that
$H_3 (C^{3,2}_{\bullet,3})$ is not isomorphic to either one, so it
must be a direct sum of two 1-dimensional irreducible representations.
The character of a direct sum of two representations is the sum of the
characters of those two representations \cite[Corollary
1.9.4]{zbMATH01601795}.  According to \Cref{tab:2}, the only
irreducible representations that have a negative character at the
conjugacy class $(2,1;0)$ are $\{1^3;0\}$ and $\{0;1^3\}$, each with a
value of $-1$, so $H_3 (C^{3,2}_{\bullet,3})$ must be isomorphic to a
direct sum of two of these (possibly the same one twice). On the other
hand, the element $(001,\operatorname{id}_{\mathfrak{S}_3})$, which
belongs to the conjugacy class $(1^2;1)$, acts by
\begin{equation}\label{eq:18}
  (001,\operatorname{id}_{\mathfrak{S}_3})x=y, \qquad
  (001,\operatorname{id}_{\mathfrak{S}_3})y=x.
\end{equation}
In other words, the matrix representing the action of
$(001,\operatorname{id}_{\mathfrak{S}_3})$ on
$H_3 (C^{3,2}_{\bullet,3})$ with respect to the basis $\{x,y\}$ is
\begin{equation*}
  \begin{bmatrix}
    0&1\\1&0
  \end{bmatrix}
\end{equation*}
Therefore, the trace of $(001,\operatorname{id}_{\mathfrak{S}_3})$ on
$H_3 (C^{3,2}_{\bullet,3})$ is $0$. According to \Cref{tab:2}, the
characters of the irreducible representations $\{1^3;0\}$ and
$\{0;1^3\}$ at the conjugacy class $(1^2;1)$ are $1$ and $-1$, so we
conclude that
$H_3 (C^{3,2}_{\bullet,3}) \cong \{1^3;0\} \oplus \{0;1^3\}$.  It
follows from \Cref{pro:1} that $E^1_{3,q} = 0$ for all $q\neq 0$, and
\begin{equation*}
  E^1_{3,0} \cong
  \left(
    \operatorname{Ind}^{\mathfrak{H}_n}_{\mathfrak{S}_{n-3} \times \mathfrak{H}_3}
    \left( \{n-3\} \otimes \{1^3;0\} \right)
  \right)
  \oplus
  \left(
    \operatorname{Ind}^{\mathfrak{H}_n}_{\mathfrak{S}_{n-3} \times \mathfrak{H}_3}
    \left( \{n-3\} \otimes \{0;1^3\} \right)
  \right).
\end{equation*}
Using the notation of \Cref{sec:second-page}, the first direct summand
is isomorphic to $\mathcal{F}_3$.

\begin{remark}\label{rem:1}
  The set $\{x+y,x-y\}$ is also a basis of $H_3 (C^{3,2}_{\bullet,3})$
  as a $\Bbbk$-vector space, since $\Bbbk$ has characteristic
  zero. \Cref{eq:17,eq:18} imply that
  \begin{align*}
    &(000,(12)) (x+y)=-(x+y),& &(000,(12)) (x-y)=-(x-y),\\
    &(001,\operatorname{id}_{\mathfrak{S}_3}) (x+y)=x+y,&
    &(001,\operatorname{id}_{\mathfrak{S}_3}) (x-y)=-(x-y).
  \end{align*}
  Similarly, one can verify that
  \begin{equation*}
    (000,(13)) (x+y)=-(x+y),\qquad (000,(13)) (x-y)=-(x-y).    
  \end{equation*}
  Since the elements $(000,(12))$, $(000,(13))$, and
  $(001,\operatorname{id}_{\mathfrak{S}_3})$ generate
  $\mathfrak{H}_3$, this shows that the subspaces $\langle x+y\rangle$
  and $\langle x-y\rangle$ of $H_3 (C^{3,2}_{\bullet,3})$ are stable
  under the action of $\mathfrak{H}_3$.  Moreover, using \Cref{tab:2},
  we see that $\langle x+y\rangle \cong \{1^3;0\}$ and
  $\langle x-y\rangle \cong \{0;1^3\}$.  It follows that $x+y$
  generates the summand
  $\operatorname{Ind}^{\mathfrak{H}_n}_{\mathfrak{S}_{n-3} \times
    \mathfrak{H}_3} \left( \{n-3\} \otimes \{1^3;0\} \right)$ of
  $E^1_{3,0}$ as a $\Bbbk [\mathfrak{H}_n]$-module, while $x-y$
  generates the summand
  $\operatorname{Ind}^{\mathfrak{H}_n}_{\mathfrak{S}_{n-3} \times
    \mathfrak{H}_3} \left( \{n-3\} \otimes \{0;1^3\} \right)$ of
  $E^1_{3,0}$ as a $\Bbbk [\mathfrak{H}_n]$-module.
\end{remark}

\subsection{Cubic dimension four and higher}
\label{sec:cubic-dimension-four}

We will give here a complete description of the simplices of cubic
dimension 4 and higher in $X^{n,2}$. As we will see, these simplices
do not contribute to the homology of $X^{n,2}$.

\begin{lemma}\label{lem:2}
  Let $e_i$ be the binary string of length $p$ containing a single 1
  in the $i$-th position.
  \begin{enumerate}[label=(\arabic*)]
  \item\label{item:1} The smallest dimension of a simplex of cubic
    dimension $p\geqslant 4$ in $X^{p,2}$ is $p-1$.
  \item\label{item:2} For every $p\geqslant 4$, every facet of cubic
    dimension $p$ in $X^{p,2}$ is in the $\mathfrak{H}_p$-orbit of
    $[0^p,e_1,e_2,\dots,e_p]$,
  \item\label{item:3} For every $p\geqslant 4$, the complex
    $C^{p,2}_{\bullet,p}$ has the form
    \begin{equation*}
      \cdots \leftarrow 0 \leftarrow C^{p,2}_{p-1,p}
      \xleftarrow{d} C^{p,2}_{p,p} \leftarrow 0 \leftarrow \cdots
    \end{equation*}
    with $C^{p,2}_{p-1,p} \cong C^{p,2}_{p,p} \cong \Bbbk^{2^p}$.
  \item\label{item:4} For every $p\geqslant 4$, the complex
    $C^{p,2}_{\bullet,p}$ has no homology.
  \end{enumerate}
\end{lemma}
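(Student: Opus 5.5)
The plan is to classify \emph{all} simplices of cubic dimension $p\geqslant 4$ in $X^{p,2}$ explicitly; items \ref{item:1}--\ref{item:4} then follow almost immediately. The claim I will prove is this: for $p\geqslant 4$, the simplices of cubic dimension exactly $p$ are precisely the full balls
\[
B(u)=\{u,u+e_1,\dots,u+e_p\},
\]
which have dimension $p$, and the punctured balls $B(u)\setminus\{u\}$, which have dimension $p-1$, one of each for every $u\in\mathbb{Z}_2^p$.

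First I show that every simplex $S$ of $X^{p,2}$ of cubic dimension $p\geqslant 4$ lies in some ball $B(u)$.
\begin{itemize}
\item Pick $v\in S$ and translate by $(v,\operatorname{id})$ so that $0^p\in S$. Every other vertex then has weight $1$ or $2$.
\item Let $A$ be the set of indices $a$ with $e_a\in S$. Let $\mathcal{B}$ be the set of pairs $\{a,b\}$ with $e_a+e_b\in S$.
\item The distance conditions say two things: each pair in $\mathcal{B}$ contains every index in $A$ (since $d(e_a,e_b+e_c)=3$ when $a\notin\{b,c\}$), and the pairs in $\mathcal{B}$ pairwise intersect.
\item The cubic dimension equals $|A\cup\bigcup\mathcal{B}|$, and this must be $p\geqslant 4$.
\end{itemize}
Now split into cases.
\begin{itemize}
\item If $\mathcal{B}=\varnothing$, then $S\subseteq B(0^p)$.
\item If $|A|\geqslant 2$ and $\mathcal{B}\neq\varnothing$, then $|A|=2$ and $\mathcal{B}=\{A\}$, so only two coordinates are covered. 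This is impossible.
\item If $A=\{a\}$, then every pair in $\mathcal{B}$ contains $a$, so $S\subseteq B(e_a)$.
\item If $A=\varnothing$, then $\mathcal{B}$ is an intersecting family of $2$-sets. Such a family is either a star centered at some $a$, giving $S\subseteq B(e_a)$, or a triangle, which covers only $3<p$ coordinates.
\end{itemize}

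Next, fix $S\subseteq B(u)$ and determine which subsets have cubic dimension $p$. Coordinate $i$ is nonconstant on $S$ only if $u+e_i\in S$. When $u\notin S$, it also needs at least two of the $u+e_j$ to be present. Hence cubic dimension $p$ forces all $u+e_i\in S$, leaving exactly $B(u)$ and $B(u)\setminus\{u\}$; both do have cubic dimension $p$. This gives \ref{item:1}: the minimal dimension is $p-1$. It gives \ref{item:2}: the facets are the $B(u)$, all in the orbit of $B(0^p)=[0^p,e_1,\dots,e_p]$ under translations. It gives \ref{item:3}: there are $2^p$ simplices in each of dimensions $p-1$ and $p$, and none elsewhere.

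For \ref{item:4}, recall that the differential $d$ is the boundary map followed by discarding faces of cubic dimension less than $p$. The faces of $B(u)$ are the simplices $B(u)\setminus\{u+e_i\}$, each of which has coordinate $i$ constant and so is discarded, together with the single face $B(u)\setminus\{u\}$. Thus $d[B(u)]=\pm[B(u)\setminus\{u\}]$. Since $u\mapsto B(u)\setminus\{u\}$ is a bijection, $d$ is a signed permutation matrix, hence an isomorphism, and the complex is exact.

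The main obstacle is the first step, the classification showing every such clique sits in a radius-one ball. The case analysis above, using the intersecting-family structure of the weight-two vertices, is where I expect the care to be needed; everything afterward is bookkeeping.
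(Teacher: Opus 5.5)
Your proof is correct, but it takes a different route from the paper's.

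\textbf{The paper's route.} It uses the full group to put two vertices at distance $2$ at $e_1,e_2$. It then observes that any further vertex raising the cubic dimension must be $e_i$ or $e_1+e_2+e_i$, so it adds at most one new coordinate; this gives the bound $p-1$ immediately. Next it asserts, ``iterating the argument'', that the minimal simplices form the orbit of $[e_1,\dots,e_p]$ and that only $0^p$ can be added to them. Finally it counts $2^p$ of each via the stabilizer $\mathfrak{S}_p$.

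\textbf{Your route.} You normalize only by a translation, encode the simplex by the set $A$ and the intersecting family $\mathcal{B}$, and prove that every simplex of cubic dimension $p\geqslant 4$ lies in a unit ball $B(u)$. This yields the \emph{complete} list of such simplices, namely the sets $B(u)$ and $B(u)\setminus\{u\}$.

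\textbf{What your route buys.} Item (3) needs that no simplex of cubic dimension $p$ exists outside dimensions $p-1$ and $p$ and outside those two families. The paper reaches dimension $\geqslant p$ only by extending minimal simplices, leaving implicit that every simplex of cubic dimension $p$ arises this way; your classification settles this directly. It also makes the count $2^p$ and the formula $d[B(u)]=\pm[B(u)\setminus\{u\}]$ immediate. The only unstated detail is that $u\mapsto B(u)\setminus\{u\}$ is injective, which is easy for $p\geqslant 3$.

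\textbf{What the paper's route buys.} Its ``each added vertex contributes at most one new coordinate'' count is the template reused in the scale-three lemma (\Cref{pro:3}). There, minimal simplices contain pairs of vertices at distance $3$, so they cannot sit in a unit ball. A full classification in the style of yours is therefore not available, and the paper only classifies the minimal-dimensional simplices.
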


\begin{proof}
  The only simplices in $X^{p,2}$ having diameter one are the
  1-simplices corresponding to the edges of the hypercube $Q_p$, which
  have cubic dimension 1. Thus, if $\Delta$ is a simplex of cubic
  dimension $p\geqslant 4$ in $X^{p,2}$, then it has diameter 2. Up to
  the action of $\mathfrak{H}_p$, we can assume that $\Delta$ contains
  the vertices $e_1$ and $e_2$. To increase its cubic dimension,
  $\Delta$ must contain a vertex $x=(x_1,\dots,x_p)$ having at least
  one entry $x_i=1$ with $i>2$. In order to have both
  $d(e_1,x)\leqslant 2$ and $d(e_2,x)\leqslant 2$, $x$ must have a
  single entry $x_i=1$ with $i>2$, and either $x_1=x_2=0$ (in which
  case, $x=e_i$) or $x_1=x_2=1$. If $x_1=x_2=1$, then acting with
  $(110\dots 0, \operatorname{id}_{\mathfrak{S}_p}) \in
  \mathfrak{H}_p$ swaps $e_1$ with $e_2$, and replaces $x$ with
  $e_i$. Also, acting with $(0^p,(3i)) \in \mathfrak{H}_p$, we can
  always assume that the new vertex added is $e_3$. It follows that
  each vertex in $\Delta$ beyond $e_1$ and $e_2$ can only increase
  cubic dimension by at most one. Therefore, to achieve cubic
  dimension $p$, $\Delta$ must have at least $p$ vertices, i.e.,
  dimension at least $p-1$, which proves \Cref{item:1}.
  
  Iterating the argument above shows that when $\Delta$ has dimension
  $p-1$, it is in the $\mathfrak{H}_p$-orbit of
  $[e_1,e_2,\dots,e_p]$. At this point, the only vertex that can be
  added to produce a simplex of higher dimension in $X^{p,2}$ is
  $(0^p)$, which shows \Cref{item:2}.

  The stabilizer of $[e_1,\dots,e_p]$ in $\mathfrak{H}_p$ is
  $\mathfrak{S}_p$. By the orbit-stabilizer theorem, the orbit of
  $[e_1,\dots,e_p]$ contains exactly
  $|\mathfrak{H}_p| / |\mathfrak{S}_p| = 2^p$ elements. Therefore,
  $C^{p,2}_{p-1,p}$ is a vector space of dimension $2^p$ with a basis
  given by the simplices in the $\mathfrak{H}_p$-orbit of
  $[e_1,\dots,e_p]$. A similar argument holds for
  $C^{p,2}_{p,p}$. Thus, the complex $C^{p,2}_{\bullet,p}$ has the
  form in \Cref{item:3}.

  The boundary map of $X^{p,2}$ operates by
  \begin{equation*}
    \partial_{p+1} \left( [0^p,e_1,\dots,e_p] \right) = [e_1,\dots,e_p]
    + \sum_{i=1}^p (-1)^i [0^p,e_1,\dots,e_{i-1},e_i,\dots,e_p],
  \end{equation*}
  and all faces $[0^p,e_1,\dots,e_{i-1},e_i,\dots,e_p]$ have cubic
  dimension $p-1$.  Hence, the differential in $C^{p,2}_{\bullet,p}$
  acts by
  \begin{equation*}
    d \left( [0^p,e_1,\dots,e_p] \right) = [e_1,\dots,e_p],
  \end{equation*}
  so it is an isomorphism. This shows that $C^{p,2}_{\bullet,p}$ has
  no homology.
\end{proof}

It follows from \Cref{lem:2} and \Cref{pro:1} that $E^1_{p,q} = 0$ for
all $p\geqslant 4$ and all $q$.

\subsection{The spectral sequence for scale two}
\label{sec:spectr-sequ-scale}

We start by summarizing the computations of the terms in the first
page $E^1_{\bullet,\bullet}$ of the spectral sequence from the
previous sections.  As depicted in \Cref{fig:4}, these terms are
$E^1_{i,0} \cong \mathcal{G}_i \oplus \mathcal{H}_i$ for all $i$, and
$E^1_{i,j} = 0$ when $j\neq 0$, where
\begin{equation*}
  \mathcal{G}_i = \operatorname{Ind}^{\mathfrak{H}_n}
  _{\mathfrak{S}_{n-i} \times \mathfrak{H}_i}
  (\{n-i\} \otimes \{0;1^i\}),
\end{equation*}
for $0\leqslant i \leqslant n$, and
\begin{equation*}
  \mathcal{H}_i =
  \begin{cases}
    \operatorname{Ind}^{\mathfrak{H}_n}_{\mathfrak{S}_{n-3} \times \mathfrak{H}_3}
    \left( \{n-3\} \otimes \{1^3;0\} \right),
    & i=3,\\
    0, & i\neq 3.
  \end{cases}
\end{equation*}

\begin{figure}[htb]
  \centering
  \begin{tikzpicture}[description/.style={fill=white,inner sep=2pt}]
    \matrix (m) [matrix of math nodes, row sep=1.2em,
    column sep=2em, text height=1.5ex, text depth=0.25ex]
    { 0 & 0 & 0 & 0 & 0\\
      \mathcal{G}_0 & \mathcal{G}_1 & \mathcal{G}_2 & \mathcal{G}_3 \oplus \mathcal{H}_3 & 0\\
      0 & 0 & 0 & 0 & 0\\ };
    \draw[<-] (m-1-1) edge (m-1-2) (m-1-2) edge (m-1-3) (m-1-3) edge (m-1-4) (m-1-4) edge (m-1-5);
    \draw[<-] (m-2-1) edge (m-2-2) (m-2-2) edge (m-2-3) (m-2-3) edge (m-2-4) (m-2-4) edge (m-2-5);
    \draw[<-] (m-3-1) edge (m-3-2) (m-3-2) edge (m-3-3) (m-3-3) edge (m-3-4) (m-3-4) edge (m-3-5);
  \end{tikzpicture}
  
  \caption{The page $E^1_{\bullet,\bullet}$ for scale two}
  \label{fig:4}
\end{figure}

We set out to understand the maps in the horizontal complex
$E^1_{\bullet,0}$.  We give explicit formulas for each differential
$d^1_{i,0} \colon E^1_{i,0} \to E^1_{i-1,0}$.  To start, notice that
each nonzero $\mathcal{G}_i$ and $\mathcal{H}_i$ is, by definition, a
representation of $\mathfrak{H}_n$ induced from a 1-dimensional
representation $U$ of $\mathfrak{S}_{n-i} \times \mathfrak{H}_i$. It
follows that any nonzero $u\in U$ generates $\mathcal{G}_i$ or
$\mathcal{H}_i$ as a $\Bbbk [\mathfrak{H}_n]$-module. Thus, the
differential $d^1_{i,0}$ is completely determined by its effect on a
generator $u$ of $\mathcal{G}_i$ and $\mathcal{H}_i$. We already
identified explicit generators in the previous sections, so all that
is left is to compute their image under the differential.

The term $\mathcal{G}_1$ is generated by the 1-simplex
$[0^n,0^{n-1}1]$, which we identify with $[0,1]$ after omitting the
$n-1$ leading zeros.  The differential in the spectral sequence
descends from the restriction of the boundary map in $X^{n,2}$, so we
have
\begin{equation*}
  d^1_{1,0} ([0,1]) = [1]-[0].
\end{equation*}
This is exactly the differential
$\partial_1\colon \mathcal{F}_1\to \mathcal{F}_0$ in the cellular
chain complex of the $n$-cube from \Cref{sec:second-page}.

As for $\mathcal{G}_2$, we showed in \Cref{sec:size-two} that it is
generated by $[00,01,10]-[01,10,11] = [00,01,11]-[00,10,11]$, after
omitting the $n-2$ leading zeros.  The differential in the spectral
sequence operates as follows:
\begin{equation*}
  \begin{split}
    d^1_{2,0} ([00,01,10]-[01,10,11]) &=
    [01,10]-[00,10]+[00,01] -[10,11]+[01,11]-[01,10]\\
    &=([00,01]-[10,11])-([00,10]-[01,11]).
  \end{split}
\end{equation*}
This map sends (a triangulation of) the square to its boundary; hence,
it is, up to a sign, the differential
$\partial_2\colon \mathcal{F}_2\to \mathcal{F}_1$ in the cellular
chain complex of the $n$-cube from \Cref{sec:second-page}.

As noted in \Cref{rem:1}, $\mathcal{G}_3$ is generated by the element
\begin{equation*}
  \begin{split}
    x-y ={} &(-[000,001,011,101] + [000,010,011,110] - [000,100,101,110]\\
    &+ [011,101,110,111] + [000,011,101,110] ) \\
    &-([000,001,010,100] - [001,010,011,111] + [001,100,101,111]\\
    &- [010,100,110,111] + [001,010,100,111]),
  \end{split}
\end{equation*}
written omitting the $n-3$ leading zeros. Each block in parentheses
represents a triangulation of the cube $[0,1]^3$. We apply the
differential in the spectral sequence, and simplify the output using
the relations in $\mathcal{G}_2$ to get
\begin{equation*}
  \begin{split}
    d^1_{3,0} (x-y) =
    &+2\big[ ([000,001,010]-[001,010,011]) - (-[100,101,110]+[101,110,111]) \big]\\
    &-2\big[ ([000,001,100]-[001,100,101]) - (-[010,011,110]+[011,110,111]) \big]\\
    &+2\big[ ([000,010,100]-[010,100,110]) - (-[001,011,101]+[011,101,111]) \big].
  \end{split}
\end{equation*}
The first set of parentheses in the top row contains a linear
combination of simplices representing a triangulation of the first
front face of the cube $[0,1]^3$. The second set of parentheses in the
top row contains a linear combination of simplices representing a
triangulation of the first back face of $[0,1]^3$. Similarly, for the
other rows. Thus, after replacing each 2-cell with a triangulation,
the map $\mathcal{G}_3\to \mathcal{G}_2$ is, up to a sign, twice the
differential $\partial_3\colon \mathcal{F}_3\to \mathcal{F}_2$ in the
cellular chain complex of the $n$-cube from \Cref{sec:second-page}.

Meanwhile, as noted in \Cref{rem:1}, $\mathcal{H}_3$ is generated by
the element
\begin{equation*}
  \begin{split}
    x+y ={} &(-[000,001,011,101] + [000,010,011,110] - [000,100,101,110]\\
    &+ [011,101,110,111] + [000,011,101,110])\\
    &+([000,001,010,100] - [001,010,011,111] + [001,100,101,111]\\
    &- [010,100,110,111] + [001,010,100,111]),
  \end{split}
\end{equation*}
written omitting the $n-3$ leading zeros. Applying the differential in
the spectral sequence and simplifying the output using the relations
in $\mathcal{G}_2$, we get $d^1_{3,0} (x+y) = 0$.

Altogether, these computations allow us to express $E^1_{\bullet,0}$
as a direct sum of subcomplexes
$\mathcal{G}_\bullet \oplus \mathcal{H}_\bullet$, where the maps in
$\mathcal{G}_\bullet$ are the restrictions of the differentials in the
spectral sequence, and the maps in $\mathcal{H}_\bullet$ are all
zero. Moreover, the maps in $\mathcal{G}_\bullet$ are, up to a nonzero
scalar multiple, the differentials in a truncation of the cellular
resolution of the $n$-cube, i.e., we have an isomorphism of complexes
$\mathcal{G}_\bullet \cong (\mathcal{F}_{\leqslant 3})_\bullet$ as
defined in \Cref{sec:second-page}. To summarize, we have shown that we
have an isomorphism
$E^1_{\bullet,0} \cong (\mathcal{F}_{\leqslant 3})_\bullet \oplus
\mathcal{H}_\bullet$ of complexes of $\mathfrak{H}_n$-representations.

\begin{remark}
  Using \cite[Lemma III.6, Theorem III.2]{zbMATH03590472}, we obtain
  the following decompositions into irreducible representations for
  $\mathcal{G}_2$ and $\mathcal{H}_3$.
  \begin{equation*}
    \begin{split}
      \mathcal{G}_2
      &\cong
        \left(
        \bigoplus_{j=0}^{n-2} \{n-2-j;(j+1,1)\}
        \right)
        \oplus
        \left(
        \bigoplus_{j=1}^{n-2} \{n-2-j;(j,1^2)\}
        \right)\\
      \mathcal{H}_3
      &\cong
        \left(
        \bigoplus_{j=0}^{n-3} \{(n-2-j,1^2);j\}
        \right)
        \oplus
        \left(
        \bigoplus_{j=0}^{n-4} \{(n-3-j,1^3);j\}
        \right)
    \end{split}
  \end{equation*}
  Because the decompositions of $\mathcal{H}_3$ and $\mathcal{G}_2$
  share no irreducible representation, Schur's Lemma guarantees that
  the only map of $\mathfrak{H}_n$-representations
  $\mathcal{H}_3\to \mathcal{G}_2$ is the zero map \cite[Proposition
  2.3.9]{zbMATH05943514}. This gives us the isomorphism of complexes
  $E^1_{\bullet,0} \cong \mathcal{G}_\bullet \oplus
  \mathcal{H}_\bullet$ without performing any computation. If one is
  willing to assume that $\Bbbk$ is algebraically closed, the
  isomorphism
  $\mathcal{G}_\bullet \cong (\mathcal{F}_{\leqslant 3})_\bullet$ can
  also be obtained without performing explicit computations as
  indicated in \Cref{sec:more-cell-compl}.
\end{remark}

Now, we can compute homology to advance to the
second page of the spectral sequence.  Since homology commutes with
direct sums, we get
\begin{equation*}
  E^2_{i,0} = H_i (E^1_{\bullet,0}) \cong
  H_i ((\mathcal{F}_{\leqslant 3})_\bullet) \oplus
  H_i (\mathcal{H}_\bullet) \cong
  H_i ((\mathcal{F}_{\leqslant 3})_\bullet) \oplus
  \mathcal{H}_i,
\end{equation*}
where $H_i (\mathcal{H}_\bullet) \cong \mathcal{H}_\bullet$ because
all the maps in the complex $\mathcal{H}_\bullet$ are zero.  Using
\Cref{pro:2}, we obtain the explicit descriptions
\begin{equation*}
  \begin{split}
    E^2_{0,0} &\cong \{n;0\},\\
    E^2_{3,0} &\cong
    \left( \bigoplus_{j=1}^{n-3} \{n-3-j; (j,1^3)\} \right) \oplus
    \operatorname{Ind}^{\mathfrak{H}_n}_{\mathfrak{S}_{n-3} \times \mathfrak{H}_3}
    \left( \{n-3\} \otimes \{1^3;0\} \right),
  \end{split}
\end{equation*}
as representations of $\mathfrak{H}_n$. For all other indices, we have
$E^2_{i,j}=0$.  Thus, the second page of the spectral sequence can be
depicted as in \Cref{fig:5}. All the maps in the second page of the
spectral sequence are zero, so the spectral sequence converges at this
page to give the homology of $X^{n,2}$. We summarize our computations
in the following theorem, where \cite[Lemma III.6, Theorem
III.2]{zbMATH03590472} are used to decompose into irreducible
representations.
\begin{figure}[htb]
  \centering
  \begin{tikzpicture}[description/.style={fill=white,inner sep=2pt}]
    \matrix (m) [matrix of math nodes, row sep=2em,
    column sep=2.5em, text height=1.5ex, text depth=0.25ex]
    { 0 & 0 & 0 & 0 & 0 & 0\\
      E^2_{0,0} & 0 & 0 & E^2_{3,0} & 0 &0\\
      0 & 0 & 0 & 0 & 0 & 0\\ };
    \draw[<-] (m-2-1) edge (m-3-3);
    \draw[<-] (m-1-2) edge (m-2-4) (m-2-4) edge (m-3-6);
  \end{tikzpicture}
  
  \caption{The page $E^2_{\bullet,\bullet}$ for scale two}
  \label{fig:5}
\end{figure}

\begin{theorem}\label{thm:1}
  For every integer $n\geqslant 3$, the nonzero homology groups of
  $X^{n,2}$ are
  \begin{equation*}
    \begin{split}
      H_0 (X^{n,2}) &\cong \{n;0\},\\
      H_3 (X^{n,2}) &\cong
      \left( \bigoplus_{j=1}^{n-3} \{n-3-j; (j,1^3)\} \right) \oplus
      \operatorname{Ind}^{\mathfrak{H}_n}_{\mathfrak{S}_{n-3} \times \mathfrak{H}_3}
                      \left( \{n-3\} \otimes \{1^3;0\} \right)\\
      &\cong \left( \bigoplus_{j=1}^{n-3} \{n-3-j; (j,1^3)\} \right) \oplus
      \left(
      \bigoplus_{j=0}^{1}\;
      \bigoplus_{i=0}^{n-3-j}
      \{(n-2-i-j,1^{2+j});i\}
      \right)
    \end{split}
  \end{equation*}
  as representations of $\mathfrak{H}_n$.
\end{theorem}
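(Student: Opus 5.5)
The plan is to read off the homology of $X^{n,2}$ from the spectral sequence of the cubic-dimension filtration, whose second page has already been computed in the discussion preceding the statement. By \Cref{pro:1}, together with the computations for cubic dimensions $0$ through $3$ and \Cref{lem:2} for cubic dimension $p\geqslant 4$, the first page is concentrated in the row $q=0$. We identified $E^1_{\bullet,0}\cong(\mathcal{F}_{\leqslant 3})_\bullet\oplus\mathcal{H}_\bullet$ as complexes. Here the differentials on the $\mathcal{G}$-part agree, up to nonzero scalars, with the cellular differentials of the $n$-cube; these rescalings do not change homology in characteristic zero. The differential on $\mathcal{H}_\bullet$ vanishes, which follows either from the explicit computation $d^1_{3,0}(x+y)=0$ or from Schur's Lemma. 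Applying \Cref{pro:2} with $i=3$ (valid since $n\geqslant 3$) gives $E^2_{0,0}\cong\{n;0\}$ and $E^2_{3,0}\cong\bigl(\bigoplus_{j=1}^{n-3}\{n-3-j;(j,1^3)\}\bigr)\oplus\mathcal{H}_3$, with every other term of the second page equal to zero.

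Next I will argue that the spectral sequence degenerates at $E^2$. For $k\geqslant 2$ the differential $d^k$ has bidegree $(-k,k-1)$, so it never maps between two terms of the row $q=0$. Since all other rows vanish, every $d^k$ with $k\geqslant2$ is zero, and hence $E^\infty=E^2$. By \Cref{eq:8}, which is an isomorphism of $\mathfrak{H}_n$-representations because the filtration is $\mathfrak{H}_n$-stable, we obtain $H_0(X^{n,2})\cong\{n;0\}$ and $H_3(X^{n,2})\cong E^2_{3,0}$, with all other homology groups vanishing. This yields the first description of $H_3$ in the statement.

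For the irreducible decomposition, I will expand $\mathcal{H}_3=\operatorname{Ind}^{\mathfrak{H}_n}_{\mathfrak{S}_{n-3}\times\mathfrak{H}_3}(\{n-3\}\otimes\{1^3;0\})$ in two stages. First, induce $\{n-3\}$ from $\mathfrak{S}_{n-3}$ to $\mathfrak{H}_{n-3}$ via \cite[Lemma III.6]{zbMATH03590472}, which gives $\bigoplus_{i=0}^{n-3}\{n-3-i;i\}$. Second, induce each $\{n-3-i;i\}\otimes\{1^3;0\}$ from $\mathfrak{H}_{n-3}\times\mathfrak{H}_3$ to $\mathfrak{H}_n$ using \cite[Theorem III.2]{zbMATH03590472}. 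In this step the first partitions combine by Littlewood--Richardson and the second partitions stay unchanged. By the Pieri rule, $(n-3-i)$ combined with $(1^3)$ gives exactly $(n-2-i,1^2)$ and $(n-3-i,1^3)$, the latter only when $n-3-i\geqslant1$. After reindexing with $j\in\{0,1\}$, this is $\bigoplus_{j=0}^{1}\bigoplus_{i=0}^{n-3-j}\{(n-2-i-j,1^{2+j});i\}$, matching the statement (and the earlier remark).

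The main obstacle is not this final step but the ingredients it relies on, above all the identification of the horizontal complex $E^1_{\bullet,0}$ with the truncated cellular complex of the cube. That identification requires $d^1_{3,0}$ on $\mathcal{G}_3$ to be a nonzero multiple of $\partial_3$ and $d^1_{3,0}$ to vanish on $\mathcal{H}_3$. Both facts are established in the preceding discussion, so within the proof itself the only delicate points are the bookkeeping of the degeneration argument and the index range in the Pieri expansion.
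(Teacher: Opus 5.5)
Your proposal is correct and follows the paper's own argument. The paper likewise identifies $E^1_{\bullet,0}\cong(\mathcal{F}_{\leqslant 3})_\bullet\oplus\mathcal{H}_\bullet$ and applies \Cref{pro:2}. It then notes that $E^2$ is concentrated at $(0,0)$ and $(3,0)$, so the spectral sequence degenerates there, and decomposes $\mathcal{H}_3$ via \cite[Lemma III.6, Theorem III.2]{zbMATH03590472}. Your bidegree argument for the vanishing of $d^k$ with $k\geqslant 2$ simply spells out what the paper asserts from its picture of the second page.
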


\subsection{Dimension check}
\label{sec:dimension-check}

By \cite[Theorem 1]{zbMATH07553291}, we have
\begin{equation*}
  \dim_\Bbbk H_3 (X^{n,2}) =
  \sum_{0\leqslant j<i<n} (j+1)(2^{n-2}-2^{i-1})=
  \sum_{i=1}^{n-1} (2^{n-2}-2^{i-1}) \binom{i+1}{2}.
\end{equation*}
Separating the difference we get
\begin{equation*}
  \sum_{i=1}^{n-1} 2^{n-2} \binom{i+1}{2} =
  2^{n-3} \sum_{i=1}^{n-1} (i^2+i) = 2^{n-3} \frac{n^3-n}{3}.
\end{equation*}
and
\begin{equation*}
  \sum_{i=1}^{n-1} 2^{i-1} \binom{i+1}{2} =
  \frac{1}{2} \sum_{i=1}^{n-1} 2^{i-1} (i^2+i).
\end{equation*}
Evaluating the identity 
\begin{equation*}
  \frac{d^2}{dx^2} \left( x \frac{1-x^n}{1-x} \right) =
  \sum_{i=0}^{n-1} (i^2+i) x^{i-1}
\end{equation*}
at $x=2$, we obtain
\begin{equation*}
  \sum_{i=1}^{n-1} (i^2+i) 2^{i-1} =
  2^{n-1} n^2-3n 2^{n-1} +2^{n+1} -2.
\end{equation*}
Combining the previous equalities, we have
\begin{equation}\label{eq:19}
  \dim_\Bbbk H_3 (X^{n,2})
  = \frac{2^{n-3}}{3} (n^3-6n^2+17n-24)+1.
\end{equation}
Next, we compute the dimension of $H_3 (X^{n,2})$ using
\Cref{thm:1}. By \Cref{eq:29}, the dimension of
$\bigoplus_{j=1}^{n-3} \{n-3-j; (j,1^3)\}$ is
\begin{equation*}
  \sum_{j=1}^{n-3} \binom{n}{j+3} \binom{j+2}{3} =
  \sum_{k=4}^n \binom{n}{k} \binom{k-1}{3} =
  \sum_{k=4}^n \binom{n}{k} \frac{k^3 -6k^2+11k-6}{6}.
\end{equation*}
The last expression is equal to
\begin{multline}\label{eq:20}
  \frac{1}{6} \left(
    \sum_{k=0}^n k^3 \binom{n}{k}
    -6 \sum_{k=0}^n k^2 \binom{n}{k}
    +11 \sum_{k=0}^n k \binom{n}{k}
    -6 \sum_{k=0}^n \binom{n}{k}
  \right) + 1 =\\ 	
  \frac{2^{n-4}}{3} (n^3 -9n^2+32n-48) +1.
\end{multline}
The other direct summand in $H_3 (X^{n,2})$ is
$ \operatorname{Ind}^{\mathfrak{H}_n}_{\mathfrak{S}_{n-3} \times
  \mathfrak{H}_3} \left( \{n-3\} \otimes \{1^3;0\} \right) $. As
$ \{n-3\} \otimes \{1^3;0\}$ is one-dimensional, the dimension of the
induced representation is
\begin{equation}\label{eq:21}
	2^{n-3} \binom{n}{3} = 2^{n-3} \frac{n(n-1)(n-2)}{6}.
\end{equation}
Adding the expressions in \Cref{eq:20,eq:21} we recover the right
hand side of \Cref{eq:19}, as desired.

\section{Scale three}
\label{sec:scale-three}

We use the strategy from \Cref{sec:suppl-filtr,sec:scale-two} to
describe the homology of $X^{n,3}$ as a representation of the
hyperoctahedral group, separating the computations by cubic dimension.

If $\Delta$ in $X^{n,3}$ is a simplex of cubic dimension at most two,
then $\Delta$ can be embedded in $Q_2$, which forces the distance
between any two vertices of $\Delta$ to be at most two. This implies
that $\Delta$ is also a simplex in $X^{n,2}$. As a result, for
$0\leqslant p\leqslant 2$, we have an isomorphism
$C^{p,3}_{\bullet, p} \cong C^{p,2}_{\bullet, p}$ of complexes of
$\mathfrak{H}_p$-representations. Thus, in the zeroth page of the
spectral sequence for $H_\bullet (X^{n,3})$, the columns
$E^0_{p,\bullet}$ with $0\leqslant p\leqslant 2$ and their homology
have exactly the same description already given in
\Cref{sec:size-zero-one,sec:size-two}.

\subsection{Cubic dimension three and four}
\label{sec:cubic-dimens-three}

Starting with cubic dimension three, we encounter simplices that were
not present in $X^{n,2}$. For example, the diagonals of cubes in
$X^{n,3}$, i.e., the simplices in the $\mathfrak{H}_n$-orbit of
$[0^n,0^{n-3} 1^3]$, are new.  A complete characterization of the
chain complexes $C^{p,3}_{\bullet,p}$ for cubic dimension
$p\geqslant 3$ becomes combinatorially challenging. Instead, we wrote
a Macaulay2 package to construct these complexes with the aid of
software (see \Cref{sec:macaulay2-package}).

In a running instance of Macaulay2, we issue the commands
\begin{verbatim}
needsPackage "VietorisRipsHypercube"
X53=VRHypercube(5,3)
\end{verbatim}
to create a computational representation for the Vietoris-Rips complex
$X^{5,3}$. The chain complex $C^{3,3}_{\bullet,3}$ and its homology
are obtained with the following commands.
\begin{verbatim}
complex(X53,3)
homology(X53,3)
\end{verbatim}
The complex has the form
\begin{equation*}
  0\longleftarrow
  \Bbbk^4\longleftarrow
  \Bbbk^{32}\longleftarrow
  \Bbbk^{64}\longleftarrow
  \Bbbk^{56}\longleftarrow
  \Bbbk^{28}\longleftarrow
  \Bbbk^8\longleftarrow
  \Bbbk^1\longleftarrow
  0
\end{equation*}
with the leftmost zero term corresponding to 0-simplices, and its only
nonzero homology group is $H_3 (C^{3,3}_{\bullet,3}) \cong
\Bbbk$. Since $H_3 (C^{3,3}_{\bullet,3})$ is 1-dimensional, it is an
irreducible representation of $\mathfrak{H}_3$. To find out its
isomorphism type, we compute its character and decompose it against
the character table of $\mathfrak{H}_3$ with the following commands.
\begin{verbatim}
c33 = character(X53,3,3)
T3 = hyperoctahedralGroupTable(3,QQ)
c33 / T3
\end{verbatim}
As a result, we find $H_3 (C^{3,3}_{\bullet,3}) \cong \{0;1^3\}$.

To understand the behavior in cubic dimension four, we proceed similarly.
\begin{verbatim}
complex(X53,4)
homology(X53,4)
\end{verbatim}
The complex $C^{4,3}_{\bullet,4}$ has the form
\begin{equation*}
  0\longleftarrow
  \Bbbk^{96}\longleftarrow
  \Bbbk^{584}\longleftarrow
  \Bbbk^{1344}\longleftarrow
  \Bbbk^{1568}\longleftarrow
  \Bbbk^{960}\longleftarrow
  \Bbbk^{248}\longleftarrow
  0
\end{equation*}
with the leftmost zero term corresponding to 1-simplices, but there are now
two nonzero homology groups, $H_4 (C^{4,3}_{\bullet,4})$ and
$H_7 (C^{4,3}_{\bullet,4})$, both 1-dimensional.  We compute their
characters and decompose them against the character table of
$\mathfrak{H}_4$.
\begin{verbatim}
c44 = character(X53,4,4)
c47 = character(X53,4,7)
T4 = hyperoctahedralGroupTable(4,QQ)
c44 / T4
c47 / T4
\end{verbatim}
As a result, we find that $H_4 (C^{4,3}_{\bullet,4}) \cong \{0;1^4\}$
and $H_7 (C^{4,3}_{\bullet,4}) \cong \{4;0\}$.

\subsection{Cubic dimension five, six, and seven}
\label{sec:cubic-dimension-five}

Proceeding as in the previous section, we find that the chain
complexes of simplices with cubic dimension five, six, and seven have
no homology. For reference, we record these complexes below.

\begin{itemize}
\item The complex $C^{5,3}_{\bullet,5}$ has the form
  \begin{equation*}
    0\longleftarrow
    \Bbbk^{800}\longleftarrow
    \Bbbk^{4192}\longleftarrow
    \Bbbk^{8512}\longleftarrow
    \Bbbk^{8640}\longleftarrow
    \Bbbk^{4560}\longleftarrow
    \Bbbk^{1120}\longleftarrow
    \Bbbk^{80}\longleftarrow
    0  
  \end{equation*}
  with the leftmost zero term corresponding to 2-simplices.
\item The complex $C^{6,3}_{\bullet,6}$ has the form
  \begin{equation*}
    0\longleftarrow
    \Bbbk^{4800}\longleftarrow
    \Bbbk^{25344}\longleftarrow
    \Bbbk^{54144}\longleftarrow
    \Bbbk^{60480}\longleftarrow
    \Bbbk^{37760}\longleftarrow
    \Bbbk^{12992}\longleftarrow
    \Bbbk^{2304}\longleftarrow
    \Bbbk^{192}\longleftarrow
    0  
  \end{equation*}
  with the leftmost zero term corresponding to 3-simplices.
\item The complex $C^{7,3}_{\bullet,7}$ has the form
  \begin{multline*}
    0\longleftarrow
    \Bbbk^{25088}\longleftarrow
    \Bbbk^{138112}\longleftarrow
    \Bbbk^{317312}\longleftarrow
    \Bbbk^{396928}\longleftarrow
    \Bbbk^{295680}\longleftarrow\\
    \longleftarrow
    \Bbbk^{135296}\longleftarrow
    \Bbbk^{38080}\longleftarrow
    \Bbbk^{6272}\longleftarrow
    \Bbbk^{448}\longleftarrow
    0  
  \end{multline*}
  with the leftmost zero term corresponding to 4-simplices.
\end{itemize}

\subsection{Cubic dimension eight and nine}
\label{sec:cubic-dimens-eight}

Due to the limit of our computational resources, the approach of the
previous sections did not work for cubic dimension eight or nine. To
overcome this issue we need the following theoretical result.

\begin{lemma}\label{pro:3}
  Define the length $p$ binary sequences $z=(0000\dots 0)$,
  $u=(1110\dots 0)$, and for integers $i$ with
  $4\leqslant i\leqslant p$:
  \begin{equation*}
    v_i=(1100\dots 010\dots 0),\qquad
    w_i=(1000\dots 010\dots 0),
  \end{equation*}
  where the last 1 appears in the $i$-th entry.
  \begin{enumerate}[label=(\arabic*)]
  \item\label{item:5} The smallest dimension of a simplex of cubic
    dimension $p\geqslant 5$ in $X^{p,3}$ is $p-2$.
  \item\label{item:6} The simplices
    $[z,u,v_4,\dots,v_k,w_{k+1},\dots,w_p]$ with
    $\lceil \frac{p+3}{2}\rceil \leqslant k\leqslant p$ form a
    complete set of representatives for the orbits of $\mathfrak{H}_p$
    acting on the set of simplices of cubic dimension $p\geqslant 5$
    and dimension $p-2$ in $X^{p,3}$.
  \item\label{item:7} For every $p\geqslant 5$, we have
    $H_{p-2} (C^{p,3}_{\bullet,p}) = 0$.
  \item\label{item:8} For every $p\geqslant 5$ and
    $\lceil \frac{p+3}{2}\rceil \leqslant k\leqslant p$, the
    stabilizer of $[z,u,v_4,\dots,v_k,w_{k+1},\dots,w_p]$ in
    $\mathfrak{H}_p$ (up to orientation) contains
    $\mathfrak{S}_{\{3,\dots,k\}} \times
    \mathfrak{S}_{\{k+1,\dots,p\}}$, where $\mathfrak{S}_I$ denotes
    the symmetric group on the elements of $I\subseteq \{1,\dots,p\}$.
  \end{enumerate}
\end{lemma}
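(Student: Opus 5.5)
The plan is to analyse simplices of cubic dimension $p$ in $X^{p,3}$ directly, as in \Cref{lem:2}, by counting how many coordinates each vertex can make nonconstant. After translating by an element of $\mathbb{Z}_2^p\subset\mathfrak{H}_p$, we may assume $z\in\Delta$. Then every other vertex of $\Delta$ has weight at most $3$, and the cubic dimension of $\Delta$ is the size of the union of the supports of its nonzero vertices. If $\Delta$ has $m$ nonzero vertices, the goal is the bound $|\bigcup\operatorname{supp}|\leqslant m+2$ and a description of the case of equality.

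The distance condition $d(x,y)=|\operatorname{supp}x|+|\operatorname{supp}y|-2|\operatorname{supp}x\cap\operatorname{supp}y|\leqslant 3$ gives the following intersection requirements:
\begin{itemize}
\item two weight-$3$ vertices share at least $2$ coordinates;
\item weight $3$ with weight $2$ or $1$ must intersect;
\item two weight-$2$ vertices must intersect;
\item weight-$1$ vertices may be disjoint from weight-$1$ or weight-$2$ vertices.
\end{itemize}

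First suppose $\Delta$ has no weight-$3$ vertex. Then the weight-$2$ supports pairwise intersect, so they form a star or a triangle. In this case the union has size at most $m+1$. Now suppose some vertex has weight $3$; move it to $u$ using $\mathfrak{S}_p$. Every other nonzero vertex meets $\{1,2,3\}$ (in at least two coordinates if it has weight $3$), so it contributes at most one new coordinate. This gives the bound $m+2$, and hence \Cref{item:5}: $\Delta$ has at least $p-1$ vertices, and the simplices listed in \Cref{item:6} attain this.

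For \Cref{item:6}, equality forces each vertex other than $z,u$ to add exactly one fresh coordinate $i\geqslant 4$. Such a vertex has support $\{a,b,i\}$ or $\{a,i\}$ with $a,b\in\{1,2,3\}$. The pairwise conditions then force all weight-$3$ ones to use the same pair $\{a,b\}$, all weight-$2$ ones to use the same $a$, and $a\in\{a,b\}$ when both types occur. Permuting coordinates, we reach $[z,u,v_4,\dots,v_k,w_{k+1},\dots,w_p]$.

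The remaining normalisation is the bound on $k$. Translating by $u$ swaps $z\leftrightarrow u$, sends $v_i$ to a weight-$2$ vertex with support $\{3,i\}$, and sends $w_i$ to a weight-$3$ vertex with support $\{2,3,i\}$. After a coordinate permutation this exchanges the numbers $k-3$ and $p-k$ of $v$'s and $w$'s. We may therefore assume $k-3\geqslant p-k$, i.e.\ $k\geqslant\lceil\frac{p+3}{2}\rceil$. If needed, distinct $k$ lie in distinct orbits; this can be checked with an invariant such as the multiset of pairwise distances.

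For \Cref{item:7}, $C^{p,3}_{p-2,p}$ is the lowest nonzero term of $C^{p,3}_{\bullet,p}$, so it suffices to show that the differential from degree $p-1$ is surjective. By equivariance it is enough to hit each representative $\Delta$. I would cone with $y=e_1=(10\dots0)$. Its distances to $z,u,v_i,w_i$ are $1,2,2,1$, and $y\notin\Delta$, so $\Delta\cup\{y\}$ is a simplex of $X^{p,3}$. Every face of $\Delta\cup\{y\}$ other than $\Delta$ has smaller cubic dimension:
\begin{itemize}
\item dropping $z$ makes coordinate $1$ constant;
\item dropping $u$ makes coordinate $3$ constant;
\item dropping $v_i$ or $w_i$ makes coordinate $i$ constant.
\end{itemize}
So $d(\Delta\cup\{y\})=\pm\Delta$ in the complex, exactly as in the proof of \Cref{lem:2}.

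For \Cref{item:8}, a transposition $(3\ i)$ with $4\leqslant i\leqslant k$ swaps $u$ with $v_i$ and fixes the other vertices. A transposition within $\{k+1,\dots,p\}$ permutes the $w$'s. Hence these groups stabilise the vertex set, i.e.\ the simplex up to orientation.

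I expect the main obstacle to be the equality analysis in \Cref{item:6}. In particular, the case with no weight-$3$ vertex must be ruled out carefully, and the translation by $u$ that yields the bound on $k$ must be verified.
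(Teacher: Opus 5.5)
Your proposal is correct and follows essentially the paper's approach: the same ``at most one new coordinate per extra vertex'' bound once $z,u\in\Delta$, the same swap $(1110\dots0,(13))$ to reach $k\geqslant\lceil\frac{p+3}{2}\rceil$, the same cone vertex $e_1$ for \Cref{item:7}, and the same stabilizer check for \Cref{item:8}; the only variation is that you rule out the no-weight-$3$ case by a direct star/triangle count rather than by citing \Cref{lem:2}. The orbit-distinctness you label ``if needed'' is needed for the representatives to be irredundant, and your invariant settles it: the vertices split into two blocks $\{u,v_4,\dots,v_k\}$ and $\{z,w_{k+1},\dots,w_p\}$, at distance $2$ within a block and $3$ across, so there are $\binom{k-2}{2}+\binom{p-k+1}{2}$ pairs at distance $2$, and this count is strictly increasing for $k\geqslant\frac{p+3}{2}$ (the paper's count $\binom{k-2}{2}+p-k$ misses the pairs $w_i,w_j$, although its conclusion is unaffected).
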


\begin{proof}
  Let $\Delta$ be a simplex of cubic dimension $p\geqslant 5$ in
  $X^{n,3}$. If $\Delta$ has diameter 2, then it is also a simplex in
  $X^{n,2}$, so \Cref{lem:2} implies that $\Delta$ has dimension at
  least $p-1$.

  Next, suppose $\Delta$ has a pair of vertices at distance three. Up
  to the action of $\mathfrak{H}_p$, we can assume these vertices are
  $z$ and $u$, so $\Delta$ has cubic dimension at least 3 and
  dimension at least 1. To increase its cubic dimension, $\Delta$ must
  contain a vertex $x=(x_1\dots x_p)$ having at least one entry
  $x_i= 1$ with $i>3$. In order to have both $d(z,x) \leqslant 3$ and
  $d(u,x) \leqslant 3$, $x$ must have a single entry $x_i=1$ with
  $i>3$, and either two or one entries $x_i=1$ with $i\leqslant 3$. It
  follows that each vertex in $\Delta$ beyond $z$ and $u$ can only
  increase cubic dimension by at most one. Therefore, to achieve cubic
  dimension $p$, $\Delta$ must have at least $p-1$ vertices, i.e.,
  dimension at least $p-2$. This proves \Cref{item:5}.

  Now, assume $\Delta$ has the smallest possible dimension, i.e.,
  $p-2$. Consider two vertices $x=(x_1\dots x_p)$ and
  $y=(y_1\dots y_p)$ of $\Delta$, other than $z$ and $u$. Reasoning as
  above, $x$ and $y$ both contain a single entry $x_i=y_j=1$ with
  $i>3$ and $j>3$. To increase cubic dimension while minimizing
  dimension, we must have $i\neq j$, which implies
  $d(x,y) \geqslant 2$. This means that the first three entries of $x$
  and $y$ differ in at most one entry. As noted in the previous
  paragraph, these first three entries must contain at least a 0 and a
  1. Therefore, up to a permutation, we can assume that the first
  three entries of $x$ and $y$ are either 110 or 100. In other words,
  $x$ is equal to $v_i$ or $w_i$ for some integer $i$ with
  $4\leqslant i\leqslant p$, and similarly for $y$. Reordering the
  vertices, we can ensure that $\Delta$ is in the
  $\mathfrak{H}_p$-orbit of
  \begin{equation*}
    [z,u, v_{i_1}, \dots, v_{i_s}, w_{j_1}, \dots, w_{j_t}].
  \end{equation*}
  Since the cubic dimension of $\Delta$ is $p$, we must have
  $\{i_1,\dots,i_s,j_1,\dots,j_t\} = \{4,\dots,p\}$. If $t>s$, then
  acting with $(1110\dots 0,(13)) \in \mathfrak{H}_p$ swaps $z$ with
  $u$, and $v_k$ with $w_k$. Thus, we can always assume that
  $s\geqslant t$, i.e., that there are more $v$ vertices than $w$
  vertices. This is equivalent to
  \begin{equation*}
    s\geqslant
    \frac{1}{2} \left|\{i_1,\dots,i_s,j_1,\dots,j_t\}\right| =
    \frac{1}{2} \left|\{4,\dots,p\}\right| =
    \frac{1}{2} (p-3).
  \end{equation*}
  Finally, permuting the last $p-3$ entries, we see that $\Delta$ is
  in the $\mathfrak{H}_p$-orbit of
  \begin{equation*}
    [z,u, v_4, \dots, v_k, w_{k+1}, \dots, w_p]
  \end{equation*}
  where $k=s+3$. Therefore, we have $k\geqslant \frac{1}{2} (p-3) + 3$
  and, since $k$ is an integer, this is equivalent to
  $k\geqslant \lceil \frac{p+3}{2}\rceil$.

  It remains to prove that the simplices
  $[z,u, v_4, \dots, v_k, w_{k+1}, \dots, w_p]$ belong to different
  $\mathfrak{H}_p$-orbits for different values of $k$. Since the
  $\mathfrak{H}_p$-action preserves the Hamming distance, the number
  of pairs of vertices at distance two is constant across each
  orbit. We proceed to count the number of pairs of vertices of
  $[z,u, v_4, \dots, v_k, w_{k+1}, \dots, w_p]$ that have distance
  two, and show that it is different for each value of $k$.  For
  indices $i\neq j$, we have $d(v_i,v_j)=2$. Extending our notation,
  we can think of $u$ as $v_3$, which means we have $\binom{k-2}{2}$
  pairs $(v_i,v_j)$. We also have $d(z,w_i)=2$, and there are $p-k$
  pairs $(z,w_i)$. There are no more pairs of vertices that have
  distance two because:
  \begin{itemize}
  \item $d(z,u)=3$;
  \item $d(z,v_i)=3$ for all $i$;
  \item $d(u,w_j)=3$ for all $j$;
  \item and $d(v_i,w_j) = 3$ for all $i\neq j$.
  \end{itemize}
  Thus, the number of pairs of vertices at distance two in
  $[z,u, v_4, \dots, v_k, w_{k+1}, \dots, w_p]$ is
  \begin{equation*}
    \binom{k-2}{2} + p-k = \frac{1}{2} (k^2-7k) +p+3.
  \end{equation*}
  As a function of $k$, this has derivative $\frac{1}{2}(2k-7)$. Given
  that $p\geqslant 5$, we have
  $k\geqslant \lceil \frac{p+3}{2}\rceil \geqslant 4$, which makes the
  function strictly increasing for $p\geqslant 5$.  This concludes the
  proof of \Cref{item:6}.

  It follows from \Cref{item:5} that, for every $p\geqslant 5$,
  $C^{p,3}_{i,p} = 0$ for all $i<p-2$. Thus, the complex
  $C^{p,3}_{\bullet,p}$ has the following form.
  \begin{equation*}
    \cdots \longleftarrow 0 \longleftarrow C^{p,3}_{p-2,p}
    \xleftarrow{\ d\ } C^{p,3}_{p-1,p} \longleftarrow \cdots
  \end{equation*}
  The $\Bbbk$-vector space $C^{p,3}_{p-2,p}$ is spanned by the
  simplices of cubic dimension $p$ and dimension $p-2$ in
  $X^{p,3}$. Let $\Delta$ be one such simplex.  By \Cref{item:6}, up
  to the action of $\mathfrak{H}_p$, we have
  \begin{equation*}
    \Delta = [z,u, v_4, \dots, v_k, w_{k+1}, \dots, w_p]
  \end{equation*}
  for some integer $k$ with
  $\lceil \frac{p+3}{2}\rceil \leqslant k\leqslant p$. Consider the
  length $p$ binary sequence $e_1=(10\dots 0)$. We have $d(e_1,z)=1$,
  $d(e_1,u)=2$, and for $4\leqslant i\leqslant p$, $d(e_1,v_i)=2$, and
  $d(e_1,w_i)=1$. Therefore,
  \begin{equation*}
    \Gamma = [e_1,z,u, v_4, \dots, v_k, w_{k+1}, \dots, w_p]
  \end{equation*}
  is a simplex of cubic dimension $p$ and dimension $p-1$ in
  $X^{p,3}$. To understand the effects of the boundary map on
  $\Gamma$, observe the following.
  \begin{itemize}
  \item Removing $e_1$ from $\Gamma$ gives $\Delta$.
  \item Removing $z$ leaves only vertices whose first entry is 1.
  \item Removing $u$ leaves only vertices whose third entry is 0.
  \item Removing $v_i$ or $w_i$ leaves only vertices whose $i$-th entry is 0.
  \end{itemize}
  In other words, removing from $\Gamma$ any vertex other than $e_1$
  produces a simplex of cubic dimension strictly less than $p$. It
  follows that $d(\Gamma) = \Delta$. We conclude that $d$ is
  surjective, which proves \Cref{item:7}.

  To prove \Cref{item:8}, it is enough to observe that:
  \begin{itemize}
  \item the elements of $\mathfrak{S}_{\{3,\dots,k\}}$ fix the
    vertices $z,u,w_{k+1},\dots,w_p$ and permute $v_4,\dots,v_k$;
  \item the elements of $\mathfrak{S}_{\{k+1,\dots,p\}}$ fix the
    vertices $z,u,v_4,\dots,v_k$ and permute $w_{k+1},\dots,w_p$.
  \end{itemize}
  Therefore,
  $\mathfrak{S}_{\{3,\dots,k\}} \times \mathfrak{S}_{\{k+1,\dots,p\}}$
  stabilizes $[z,u,v_4,\dots,v_k,w_{k+1},\dots,w_p]$ up to
  orientation.
\end{proof}

Combining \Cref{pro:3} with the computational approach described in
\Cref{sec:python-code}, we obtain the following partial description
for the complex $C^{8,3}_{\bullet,8}$:
\begin{equation*}
  0\longleftarrow
  \begin{matrix}\Bbbk^{121856}\\\oplus\\0\end{matrix}
  \xleftarrow{\,d_1\,}
  \begin{matrix}\Bbbk^{709632}\\\oplus\\\Bbbk^{256}\end{matrix}
  \xleftarrow{\,d_2\,}
  \begin{matrix}\Bbbk^{1770496}\\\oplus\\\Bbbk^{256}\end{matrix}
  \xleftarrow{\,d_3\,}
  \begin{matrix}\Bbbk^{2487296}\\\oplus\\0\end{matrix}
  \longleftarrow
  \dots \longleftarrow
\end{equation*}
where the leftmost zero term corresponds to 5-simplices. Each term is a
direct sum of subspaces. The top summand is spanned by simplices with
diameter 3, which are found using the process described in
\Cref{sec:python-code}. The bottom summand is spanned by simplices
with diameter 2; note that these simplices exist in $X^{n,2}$ and are
completely determined in \Cref{lem:2}.  Let $d'_i$ be the restriction
of $d_i$ to the top summands, and let $d''_i$ be the restriction of
$d_i$ to the bottom summands.  Then, we have the block decompositions
\begin{equation*}
  d_1 =
  \begin{bmatrix}
    d'_1 & 0
  \end{bmatrix},\qquad
  d_2 =
  \begin{bmatrix}
    d'_2 & 0\\
    * & d''_2
  \end{bmatrix},\qquad
  d_3 =
  \begin{bmatrix}
    d'_3\\0
  \end{bmatrix}.
\end{equation*}
Although we compute homology with coefficients in $\Bbbk$, it is
convenient to think of the matrices $d_i,d'_i,d''_i$ as having integer
entries, so we can apply the functor $- \otimes_{\mathbb{Z}} R$ to
work over a different ring $R$.  By \Cref{pro:3}, we know
$H_{6} (C^{8,3}_{\bullet,8}) = 0$, so $d_1$ is surjective. Since
$\operatorname{im} (d_2) \subseteq \ker (d_1)$, we have
\begin{equation*}
  \dim_\Bbbk \operatorname{im} (d_2 \otimes_{\mathbb{Z}} \Bbbk) \leqslant
  \dim_\Bbbk \ker (d_1 \otimes_{\mathbb{Z}} \Bbbk) = 588032
\end{equation*}
by the Rank-Nullity Theorem. The methods of \Cref{sec:python-code}
show that $d'_2 \otimes_{\mathbb{Z}} \mathbb{Z}/2$ has rank 587776. By
\Cref{lem:2}, $d''_2$ is an isomorphism, so
$d''_2 \otimes_{\mathbb{Z}} \mathbb{Z}/2$ has rank 256. Therefore, we
have
\begin{equation*}
  \dim_\Bbbk \operatorname{im} (d_2 \otimes_{\mathbb{Z}} \Bbbk) \geqslant
  \dim_\Bbbk \operatorname{im} (d_2 \otimes_{\mathbb{Z}} \mathbb{Z}/2) 
  = 588032,
\end{equation*}
since the rank can only decrease when working modulo a prime.  We
conclude that $\operatorname{im} (d_2) = \ker (d_1)$ and
$H_{7} (C^{8,3}_{\bullet,8}) = 0$. Similarly, we have
$\operatorname{im} (d_3) \subseteq \ker (d_2)$ and
\begin{equation*}
  \dim_\Bbbk \operatorname{im} (d_3 \otimes_{\mathbb{Z}} \Bbbk) \leqslant
  \dim_\Bbbk \ker (d_2 \otimes_{\mathbb{Z}} \Bbbk) = 1182720.
\end{equation*}
Computationally, we find that $d'_3 \otimes_{\mathbb{Z}} \mathbb{Z}/2$
has rank 1182720, so
\begin{equation*}
  \dim_\Bbbk \operatorname{im} (d_3 \otimes_{\mathbb{Z}} \Bbbk) \geqslant
  \dim_\Bbbk \operatorname{im} (d_3 \otimes_{\mathbb{Z}} \mathbb{Z}/2) 
  = 1182720.
\end{equation*}
We conclude that $\operatorname{im} (d_3) = \ker (d_3)$ and
$H_{8} (C^{8,3}_{\bullet,8}) = 0$.

Next, using the same notation as before along with the methods in
\Cref{sec:python-code}, we find the following partial description for
the complex $C^{9,3}_{\bullet,9}$:
\begin{equation*}
  0\longleftarrow
  \begin{matrix}\Bbbk^{566784}\\\oplus\\0\end{matrix}
  \xleftarrow{\,d_1\,}
  \begin{matrix}\Bbbk^{3517440}\\\oplus\\\Bbbk^{512}\end{matrix}
  \xleftarrow{\,d_2\,}
  \begin{matrix}\Bbbk^{9549312}\\\oplus\\\Bbbk^{512}\end{matrix}
  \dots \longleftarrow
\end{equation*}
where the leftmost zero term corresponds to 6-simplices, and the
differentials decompose as:
\begin{equation*}
  d_1 =
  \begin{bmatrix}
    d'_1 & 0
  \end{bmatrix},\qquad
  d_2 =
  \begin{bmatrix}
    d'_2 & 0\\
    * & d''_2
  \end{bmatrix}.
\end{equation*}
By \Cref{pro:3}, we know $H_{7} (C^{9,3}_{\bullet,9}) = 0$, so $d_1$
is surjective. Since $\operatorname{im} (d_2) \subseteq \ker (d_1)$,
we have
\begin{equation*}
  \dim_\Bbbk \operatorname{im} (d_2 \otimes_{\mathbb{Z}} \Bbbk) \leqslant
  \dim_\Bbbk \ker (d_1 \otimes_{\mathbb{Z}} \Bbbk) = 2951168
\end{equation*}
The methods of \Cref{sec:python-code} show that
$d'_2 \otimes_{\mathbb{Z}} \mathbb{Z}/2$ has rank 2950656, and
$d''_2 \otimes_{\mathbb{Z}} \mathbb{Z}/2$ has rank 512 by
\Cref{lem:2}. This implies that
\begin{equation*}
  \dim_\Bbbk \operatorname{im} (d_2 \otimes_{\mathbb{Z}} \Bbbk) \geqslant
  \dim_\Bbbk \operatorname{im} (d_2 \otimes_{\mathbb{Z}} \mathbb{Z}/2) 
  = 2951168.
\end{equation*}
Therefore, $\operatorname{im} (d_2) = \ker (d_1)$ and
$H_{8} (C^{9,3}_{\bullet,9}) = 0$.

\subsection{The spectral sequence for scale three}
\label{sec:first-page-diff}

We start by collecting the information from the previous sections, and
assemble the spectral sequence described in \Cref{sec:suppl-filtr} for
$X^{n,3}$. As a reminder, $E^0_{p,q} = C^{n,r}_{p+q,p}$. \Cref{fig:6}
depicts the vertical complexes in the zeroth page for cubic dimensions
0 through 9; we use question marks for terms we did not describe or
compute explicitly.
\begin{figure}[htb]
  \centering
  \begin{tikzpicture}[description/.style={fill=white,inner sep=2pt}]
    \matrix (m) [matrix of math nodes, row sep=1.2em,
    column sep=1.5em, text height=1.5ex, text depth=0.25ex]
    { 0 & 0 & 0 & 0 & 0 & 0 & 0 & 0 & ? & ?\\
      0 & 0 & 0 & 0 & 0 & 0 & 0 & C^{n,3}_{13,7} & ? & ?\\
      0 & 0 & 0 & 0 & 0 & 0 &  C^{n,3}_{11,6} & C^{n,3}_{12,7} & ? & ?\\
      0 & 0 & 0 & C^{n,3}_{7,3} & 0 & C^{n,3}_{9,5} & C^{n,3}_{10,6} & C^{n,3}_{11,7} & ? & ?\\
      0 & 0 & 0 & C^{n,3}_{6,3} & C^{n,3}_{7,4} & C^{n,3}_{8,5} & C^{n,3}_{9,6} & C^{n,3}_{10,7} & ? & ?\\
      0 & 0 & 0 & C^{n,3}_{5,3} & C^{n,3}_{6,4} & C^{n,3}_{7,5} & C^{n,3}_{8,6} & C^{n,3}_{9,7} & ? & ?\\
      0 & 0 & C^{n,3}_{3,2} & C^{n,3}_{4,3} & C^{n,3}_{5,4} & C^{n,3}_{6,5} & C^{n,3}_{7,6} & C^{n,3}_{8,7} & C^{n,3}_{9,8} & ?\\
      C^{n,3}_{0,0} & C^{n,3}_{1,1} & C^{n,3}_{2,2} & C^{n,3}_{3,3} & C^{n,3}_{4,4} & C^{n,3}_{5,5} & C^{n,3}_{6,6} & C^{n,3}_{7,7} & C^{n,3}_{8,8} & C^{n,3}_{9,9}\\
      0 & 0 & C^{n,3}_{1,2} & C^{n,3}_{2,3} & C^{n,3}_{3,4} & C^{n,3}_{4,5} & C^{n,3}_{5,6} & C^{n,3}_{6,7} & C^{n,3}_{7,8} & C^{n,3}_{8,9}\\
      0 & 0 & 0 & C^{n,3}_{1,3} & C^{n,3}_{2,4} & C^{n,3}_{3,5} & C^{n,3}_{4,6} & C^{n,3}_{5,7} & C^{n,3}_{6,8} & C^{n,3}_{7,9}\\
      0 & 0 & 0 & 0 & 0 & 0 & 0 & 0 & 0 & 0\\ };
    \foreach \j in {1,2,3,4,5,6,7,8,9,10}
    \draw[->] (m-1-\j) edge (m-2-\j) (m-2-\j) edge (m-3-\j) (m-3-\j) edge (m-4-\j) (m-4-\j) edge (m-5-\j) (m-5-\j) edge (m-6-\j) (m-6-\j) edge (m-7-\j) (m-7-\j) edge (m-8-\j) (m-8-\j) edge (m-9-\j) (m-9-\j) edge (m-10-\j) (m-10-\j) edge (m-11-\j);
  \end{tikzpicture}
  
  \caption{The page $E^0_{\bullet,\bullet}$ for scale three}
  \label{fig:6}
\end{figure}

The first page, depicted in \Cref{fig:7}, is obtained by taking
homology of the vertical complexes in the zeroth page.
\begin{figure}[htb]
  \centering
  \begin{tikzpicture}[description/.style={fill=white,inner sep=2pt}]
    \matrix (m) [matrix of math nodes, row sep=1.2em,
    column sep=1.5em, text height=1.5ex, text depth=0.25ex]
    { 0 & 0 & 0 & 0 & 0 & 0 & 0 & 0 & ? & ?\\
      0 & 0 & 0 & 0 & 0 & 0 & 0 & 0 & ? & ?\\
      0 & 0 & 0 & 0 & 0 & 0 & 0 & 0 & ? & ?\\
      0 & 0 & 0 & 0 & 0 & 0 & 0 & 0 & ? & ?\\
      0 & 0 & 0 & 0 & E^1_{4,3} & 0 & 0 & 0 & ? & ?\\
      0 & 0 & 0 & 0 & 0 & 0 & 0 & 0 & ? & ?\\
      0 & 0 & 0 & 0 & 0 & 0 & 0 & 0 & ? & ?\\
      E^1_{0,0} & E^1_{1,0} & E^1_{2,0} & E^1_{3,0} & E^1_{4,0} & 0 & 0 & 0 & 0 & ?\\
      0 & 0 & 0 & 0 & 0 & 0 & 0 & 0 & 0 & 0\\
      0 & 0 & 0 & 0 & 0 & 0 & 0 & 0 & 0 & 0\\
      0 & 0 & 0 & 0 & 0 & 0 & 0 & 0 & 0 & 0\\ };
    \foreach \j in {1,2,3,4,5,6,7,8,9,10,11}
    \draw[<-] (m-\j-1) edge (m-\j-2) (m-\j-2) edge (m-\j-3) (m-\j-3) edge (m-\j-4) (m-\j-4) edge (m-\j-5) (m-\j-5) edge (m-\j-6) (m-\j-6) edge (m-\j-7) (m-\j-7) edge (m-\j-8) (m-\j-8) edge (m-\j-9) (m-\j-9) edge (m-\j-10);
  \end{tikzpicture}
  
  \caption{The page $E^1_{\bullet,\bullet}$ for scale three}
  \label{fig:7}
\end{figure}

Next, we identify the horizontal complex $E^1_{\bullet,0}$ with
$(\mathcal{F}_{\geqslant 4} )_\bullet$, the cellular chain complex of
the 4-skeleton of the $n$-cube described in
\Cref{sec:second-page}. The first two differentials already match,
since they behave exactly as the corresponding maps at scale two (see
the computations in \Cref{sec:spectr-sequ-scale}).  By \Cref{pro:1},
we have
\begin{equation*}
  E^1_{3,0} \cong 
  \operatorname{Ind}^{\mathfrak{H}_n}_{\mathfrak{S}_{n-3} \times \mathfrak{H}_3}
  \left( \{n-3\} \otimes H_3 (C^{3,3}_{\bullet,3}) \right) \cong
  \operatorname{Ind}^{\mathfrak{H}_n}_{\mathfrak{S}_{n-3} \times \mathfrak{H}_3}
  \left( \{n-3\} \otimes \{0;1^3\} \right)
\end{equation*}
where the isomorphism $H_3 (C^{3,3}_{\bullet,3}) \cong \{0;1^3\}$ was
established in \Cref{sec:cubic-dimens-three}. Notice that
$\{n-3\} \otimes \{0;1^3\}$ is 1-dimensional, so $E^1_{3,0}$ is
generated as a $\Bbbk[\mathfrak{H}_n]$-module by a single
element. Thus, the differential
$d^1_{3,0} \colon E^1_{3,0} \to E^1_{2,0}$ is completely determined by
its value on a generator of $E^1_{3,0}$. We illustrate how to compute
this value using our software package. In a new instance of Macaulay2,
we run the following commands.
\begin{verbatim}
needsPackage "VietorisRipsHypercube"
X53 = VRHypercube(5,3)
differential(X53,3)
\end{verbatim}
The output tells us that $d^1_{3,0}$ sends the generator of
$E^1_{3,0}$ corresponding to the cube $\{0\}^{n-3} \times [0,1]^3$ to
the linear combination of simplices written in \Cref{eq:22} omitting
the $n-3$ initial zeros.
\begin{equation}\label{eq:22}
  \begin{split}
    &-\left[ ([000,001,011]-[000,010,011]) - ([100,101,111]-[100,110,111])\right]\\
    &+\left[ ([000,001,101]-[000,100,101]) -([010,011,111]-[010,110,111]) \right]\\
    &-\left[ ([000,010,110]-[000,100,110]) -([001,011,111]-[001,101,111]) \right]
  \end{split}
\end{equation}
As was the case for $d^1_{3,0}$ in \Cref{sec:spectr-sequ-scale}, each
row contains a difference of two opposite faces of
$\{0\}^{n-3} \times [0,1]^3$, each face being a square triangulated
along a diagonal. Our software chose a different triangulation than
the one we picked by hand in \Cref{sec:spectr-sequ-scale}, but the two
are equivalent in $E^1_{2,0}$, as observed in
\Cref{sec:size-two}. This allows us to identify $d^1_{3,0}$ with the
differential $\partial_3\colon \mathcal{F}_3\to \mathcal{F}_2$ in the
cellular chain complex of the $n$-cube. Similarly, we have
\begin{equation*}
  E^1_{4,0} \cong 
  \operatorname{Ind}^{\mathfrak{H}_n}_{\mathfrak{S}_{n-4} \times \mathfrak{H}_4}
  \left( \{n-4\} \otimes H_4 (C^{4,3}_{\bullet,4}) \right) \cong
  \operatorname{Ind}^{\mathfrak{H}_n}_{\mathfrak{S}_{n-4} \times \mathfrak{H}_4}
  \left( \{n-4\} \otimes \{0;1^4\} \right)
\end{equation*}
where the isomorphism $H_4 (C^{4,3}_{\bullet,4}) \cong \{0;1^4\}$ was
established in \Cref{sec:cubic-dimens-three}. Again,
$\{n-4\} \otimes \{0;1^4\}$ is 1-dimensional, so $E^1_{4,0}$ is
generated as a $\Bbbk[\mathfrak{H}_n]$-module by a single
element. Issuing the command
\begin{verbatim}
differential(X53,4)
\end{verbatim}
we obtain the image under the differential $d^1_{4,0}$ of the
generator of $E^1_{4,0}$ corresponding to the hypercube
$\{0\}^{n-4} \times [0,1]^4$. After rearranging terms and omitting the
$n-4$ initial zeros, we get the expression in \Cref{eq:23}.
\begin{equation}\label{eq:23}
  \begin{split}
    -\big[\,
    (&[0000,0001,0011,0111]-[0000,0010,0011,0111]+[0000,0010,0110,0111]\\
    &\quad -[0000,0100,0110,0111]+[0000,0100,0101,0111]-[0000,0001,0101,0111])\\
    -(&[1000,1001,1011,1111]-[1000,1010,1011,1111]+[1000,1010,1110,1111]\\
    &\quad -[1000,1100,1110,1111]+[1000,1100,1101,1111]-[1000,1001,1101,1111])
    \,\big]\\
    +\big[\,
    (&[0000,0001,0011,1011]-[0000,0010,0011,1011]+[0000,0010,1010,1011]\\
    &\quad -[0000,1000,1010,1011]+[0000,1000,1001,1011]-[0000,0001,1001,1011])\\
    -(&[0100,0101,0111,1111]-[0100,0110,0111,1111]+[0100,0110,1110,1111]\\
    &\quad -[0100,1100,1110,1111]+[0100,1100,1101,1111]-[0100,0101,1101,1111])
    \,\big]\\
    -\big[\,
    (&[0000,0001,0101,1101]-[0000,0100,0101,1101]+[0000,0100,1100,1101]\\
    &\quad -[0000,1000,1100,1101]+[0000,1000,1001,1101]-[0000,0001,1001,1101])\\
    -(&[0010,0011,0111,1111]-[0010,0110,0111,1111]+[0010,0110,1110,1111]\\
    &\quad -[0010,1010,1110,1111]+[0010,1010,1011,1111]-[0010,0011,1011,1111])
    \,\big]\\
    +\big[\,
    (&[0000,0010,0110,1110]-[0000,0100,0110,1110]+[0000,0100,1100,1110]\\
    &\quad -[0000,1000,1100,1110]+[0000,1000,1010,1110]-[0000,0010,1010,1110])\\
    -(&[0001,0011,0111,1111]-[0001,0101,0111,1111]+[0001,0101,1101,1111]\\
    &\quad -[0001,1001,1101,1111]+[0001,1001,1011,1111]-[0001,0011,1011,1111])
    \,\big]
  \end{split}
\end{equation}
Each block in square brackets contains the difference of two opposite
faces of $\{0\}^{n-4} \times [0,1]^4$. Each face is a cube
triangulated into 3-simplices. The base triangulation of the cube used
here is always given by the linear combination of simplices in
\Cref{eq:24}.
\begin{equation}
  \label{eq:24}
  \begin{split}
    &[000,001,011,111]-[000,010,011,111]+[000,010,110,111]\\
    &\quad -[000,100,110,111]+[000,100,101,111]-[000,001,101,111]
  \end{split}
\end{equation}
Appending a zero to all binary sequences appearing in \Cref{eq:24} we
get a triangulation of the $n$-th front face of
$\{0\}^{n-4} \times [0,1]^4$, which appears as the first block in
parentheses in the last set of square brackets of
\Cref{eq:23}. Similarly, appending a one, we get a triangulation of
the $n$-th back face of $\{0\}^{n-4} \times [0,1]^4$, which appears as
the second block in parentheses in the last set of square brackets of
\Cref{eq:23}. Inserting zeros and ones in different positions gives a
triangulation of the other front and back faces. After replacing each
cube with its triangulation, the differential
$\partial_4\colon \mathcal{F}_4 \to \mathcal{F}_3$ in the cellular
chain complex of the $n$-cube produces exactly the expression of
\Cref{eq:23}. We conclude that
$E^1_{\bullet,0} \cong (\mathcal{F}_{\leqslant 4})_\bullet$.

We can move on to the second
page of the spectral sequence, which is depicted in
\Cref{fig:8}. Using \Cref{pro:2}, we have isomorphisms
\begin{equation}\label{eq:25}
  \begin{split}
    &E^2_{0,0} = H_0 (E^1_{\bullet,0}) \cong \{n;0\},\\
    &E^2_{4,0} = H_4 (E^1_{\bullet,0}) \cong
      \bigoplus_{j=1}^{n-4} \{n-4-j; (j,1^4)\}
  \end{split}
\end{equation}
of $\mathfrak{H}_n$-representations.
\begin{figure}[htb]
  \centering
  \begin{tikzpicture}[description/.style={fill=white,inner sep=2pt}]
    \matrix (m) [matrix of math nodes, row sep=1.2em,
    column sep=1.5em, text height=1.5ex, text depth=0.25ex]
    { 0 & 0 & 0 & 0 & 0 & 0 & 0 & 0 & ? & ?\\
      0 & 0 & 0 & 0 & E^2_{4,3} & 0 & 0 & 0 & ? & ?\\
      0 & 0 & 0 & 0 & 0 & 0 & 0 & 0 & ? & ?\\
      0 & 0 & 0 & 0 & 0 & 0 & 0 & 0 & ? & ?\\
      E^2_{0,0} & 0 & 0 & 0 & E^2_{4,0} & 0 & 0 & 0 & 0 & ?\\
      0 & 0 & 0 & 0 & 0 & 0 & 0 & 0 & 0 & 0\\ };
    \draw[<-] (m-5-1) edge (m-6-3);
    \draw[<-] (m-4-3) edge (m-5-5) (m-5-5) edge (m-6-7);
    \draw[<-] (m-1-3) edge (m-2-5) (m-2-5) edge (m-3-7);
  \end{tikzpicture}
  
  \caption{The page $E^2_{\bullet,\bullet}$ for scale three}
  \label{fig:8}
\end{figure}
Since $E^1_{3,3} = E^1_{5,3} = 0$, we deduce that
$E^2_{4,3} \cong E^1_{4,3}$. Then, combining \Cref{pro:1} with the
computation from \Cref{sec:cubic-dimens-three}, we get an isomorphism
\begin{equation}\label{eq:26}
  E^2_{4,3} \cong
  \operatorname{Ind}^{\mathfrak{H}_n}_{\mathfrak{S}_{n-4} \times \mathfrak{H}_4}
  \left( \{n-4\} \otimes \{4;0\} \right)
\end{equation}
of $\mathfrak{H}_n$representations.

We are finally ready for the main result of this section.

\begin{theorem}\label{thm:2}
  For every integer $n\geqslant 4$, the nonzero homology groups of
  $X^{n,3}$ are
  \begin{equation*}
    \begin{split}
      H_0 (X^{n,3}) &\cong \{n;0\},\\
      H_4 (X^{n,3}) &\cong
      \bigoplus_{j=1}^{n-4} \{n-4-j; (j,1^4)\} \\
      H_7 (X^{n,3}) &\cong
      \operatorname{Ind}^{\mathfrak{H}_n}_{\mathfrak{S}_{n-4} \times \mathfrak{H}_4}
                      \left( \{n-4\} \otimes \{4;0\} \right)
      \cong \bigoplus_{i=0}^{n-4} \bigoplus_{j=0}^{\min\{4,n-4-i\}} \{(n-i-j,j);i\}
    \end{split}
  \end{equation*}
  as representations of $\mathfrak{H}_n$.
\end{theorem}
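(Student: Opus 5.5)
The plan is to finish the spectral sequence argument from \Cref{sec:first-page-diff}. The main gap is the columns $p\geqslant 8$, which were only partially computed. First I will show that $E^1_{p,q}=0$ for every $p\geqslant 5$ and every $q$. For $p=5,6,7$ this is the computation in \Cref{sec:cubic-dimension-five}, together with \Cref{pro:1}. For $p=8,9$, \Cref{pro:3} and the rank computations give vanishing of the low homology of $C^{p,3}_{\bullet,p}$, but not of all of it. For $p\geqslant 10$ nothing has been computed yet.

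To handle these columns I will use the following reduction. A simplex of $X^{p,3}$ with $p$ large has at most $|\Delta|$ vertices, and its cubic dimension is at most $3(|\Delta|-1)$. More usefully, I will invoke the known homotopy type of $X^{n,3}$: by \cite{zbMATH07819241} and related work it has reduced homology only in degrees $4$ and $7$. I will compare total dimensions instead of proving vanishing column by column. Summing the known contributions from columns $p\leqslant 4$ gives the $E^2$ terms in \Cref{eq:25,eq:26}, and the contributions from columns $p\geqslant 5$ can only add to this. The dimension of $E^2_{4,0}\oplus E^2_{4,3}$ already matches the Betti numbers from \cite{zbMATH07819241}, exactly as in \Cref{sec:dimension-check}. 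I would then check that $E^\infty$ from columns $p\leqslant 4$ survives unchanged, so all other terms must die. If I wanted to avoid relying on the literature, I would instead try to extend the argument of \Cref{pro:3}\ref{item:7} to a full acyclicity statement for $p\geqslant 5$ via a discrete Morse matching (coning with $e_1$). That extension is the step I expect to be the main obstacle.

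Granting that columns $p\geqslant 5$ contribute nothing, the $E^2$ page has nonzero entries only at $E^2_{0,0}$, $E^2_{4,0}$ and $E^2_{4,3}$. The only higher differentials that could touch these are the ones drawn in \Cref{fig:8}:
\begin{itemize}
\item $d^2$ into $E^2_{0,0}$ from $E^2_{2,-1}=0$;
\item $d^r$ out of $E^2_{4,0}$ into $E^r_{4-r,r-1}$, and $d^r$ out of $E^2_{4,3}$ into $E^r_{4-r,r+2}$. Every column $p\leqslant 3$ is zero off the bottom row.
\end{itemize}
So all these differentials vanish and the sequence degenerates at $E^2$. Then \Cref{eq:8} gives $H_0\cong\{n;0\}$, $H_4\cong E^2_{4,0}$ and $H_7\cong E^2_{4,3}$, and no other homology.

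Finally, I will decompose $\operatorname{Ind}^{\mathfrak{H}_n}_{\mathfrak{S}_{n-4}\times\mathfrak{H}_4}(\{n-4\}\otimes\{4;0\})$ into irreducibles. The first step is to induce $\{n-4\}$ to $\mathfrak{H}_{n-4}$ via \cite[Lemma III.6]{zbMATH03590472}, which gives $\bigoplus_i\{n-4-i;i\}$. Then I apply \cite[Theorem III.2]{zbMATH03590472}, which is the Pieri rule on the first partition. Combining $(n-4-i)$ with $(4)$ yields $(n-i-j,j)$ for $0\leqslant j\leqslant\min\{4,n-4-i\}$, which is the stated formula.
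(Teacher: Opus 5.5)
Your handling of the columns $p\geqslant 8$ has a genuine gap. You never establish $E^1_{p,q}=0$ for $p\geqslant 5$: for $p\geqslant 10$ you have nothing, and for $p=8,9$ only the lowest few rows are known. The dimension comparison you offer as a substitute rests on a false premise, namely that columns $p\geqslant 5$ can ``only add'' to what columns $p\leqslant 4$ contribute. In fact the unknown columns $p\geqslant 8$ can support differentials $d^r\colon E^r_{4+r,4-r}\to E^r_{4,3}$ with $r\geqslant 4$, whose sources lie in total degree $8$. Such differentials would make $E^\infty_{4,3}$ a proper quotient of $E^2_{4,3}$.

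Your list of differentials checks only the \emph{outgoing} ones from $E_{4,0}$ and $E_{4,3}$. Those are harmless, because columns $p\leqslant 3$ vanish off the bottom row. The \emph{incoming} ones are the only dangerous ones, and ``granting that columns $p\geqslant 5$ contribute nothing'' simply assumes them away. Full acyclicity of $C^{p,3}_{\bullet,p}$ for all $p\geqslant 5$ is not needed, and the paper does not prove it.

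The missing idea is that each total degree sees only finitely many columns. By part (1) of \Cref{pro:3}, $C^{p,3}_{i,p}=0$ for $i<p-2$; your bound ``cubic dimension at most $3(|\Delta|-1)$'' points in the useless direction. So it suffices to show $E^2_{p,q}=0$ for $p+q\leqslant 8$ away from $(0,0)$, $(4,0)$, $(4,3)$. This holds because:
\begin{itemize}
\item columns $5$--$7$ are acyclic;
\item the rank computations give $H_6=H_7=H_8=0$ for $C^{8,3}_{\bullet,8}$ and $H_7=H_8=0$ for $C^{9,3}_{\bullet,9}$;
\item part (3) of \Cref{pro:3} gives $H_8(C^{10,3}_{\bullet,10})=0$;
\item columns $p\geqslant 11$ are empty in total degree $\leqslant 8$.
\end{itemize}
This protects all three surviving entries and settles $H_k(X^{n,3})$ for $k\leqslant 8$. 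The degrees $k\geqslant 9$ are then handled by the external vanishing result \cite[Theorem A]{zbMATH07725058}. This is the paper's argument.

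Alternatively, your dimension count can be salvaged if you run it in the correct direction. Every total-degree-$7$ entry other than $E_{4,3}$ already vanishes at $E^1$, using $H_7(C^{8,3}_{\bullet,8})=0$ and part (3) of \Cref{pro:3} for $p=9$. Hence $H_7(X^{n,3})\cong E^\infty_{4,3}$, which is a quotient of $E^2_{4,3}$. Equality of dimensions with the known Betti number $2^{n-4}\binom{n}{4}$ from \cite[Theorem 24]{zbMATH08181349} then forces $E^\infty_{4,3}=E^2_{4,3}$. The entry $E_{4,0}$ needs no such argument, since all total-degree-$5$ entries in columns $p\geqslant 5$ vanish at $E^1$. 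This route trades the degree-$8$ rank computations for a further input from the literature.

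Your final irreducible decomposition via \cite[Lemma III.6, Theorem III.2]{zbMATH03590472} is correct.
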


\begin{proof}
  Using the spectral sequence constructed in this section, we have
  isomorphisms
  \begin{equation}
    H_k (X^{n,3}) \cong \bigoplus_{p+q=k} E^\infty_{p,q}
  \end{equation}
  of $\mathfrak{H}_n$-representations. From \Cref{fig:8} and
  \Cref{pro:3}, it follows that, with the exception of $(p,q)=(0,0)$,
  $(p,q)=(4,0)$, and $(p,q)=(4,3)$, we have $E^2_{p,q} = 0$ whenever
  $0\leqslant p+q\leqslant 8$. This implies $E^i_{p,q} = 0$ for all
  $i\geqslant 3$ with the same choices of $p$ and $q$.  It follows
  that $H_k (X^{n,3}) = 0$ for $k\in \{1,2,3,5,6,8\}$.  We also get
  $E^\infty_{0,0} \cong E^2_{0,0}$ because $E^\infty_{0,0}$ is the
  quotient of $E^2_{0,0}$ by images of maps originating from terms
  $E^i_{p,q}$ with $i\geqslant 3$ and $p+q=1$, which are all zero.
  Similarly, we have $E^\infty_{4,0} \cong E^2_{4,0}$ and
  $E^\infty_{4,3} \cong E^2_{4,3}$.  Thus, we get
  $H_0 (X^{n,3}) \cong E^2_{0,0}$, $H_4 (X^{n,3}) \cong E^2_{4,0}$,
  and $H_7 (X^{n,3}) \cong E^2_{4,3}$, with these terms described in
  \Cref{eq:25,eq:26}. We appeal to \cite[Theorem A]{zbMATH07725058} to
  conclude that there are no other nonzero homology groups.

  The decomposition of $H_7 (X^{n,3})$ into irreducible
  representations follows from \cite[Lemma III.6, Theorem
  III.2]{zbMATH03590472}.
\end{proof}

\subsection{Dimension check}
\label{sec:dimension-check-2}

By \cite[Theorem 24]{zbMATH08181349}, we have
\begin{equation*}
  \dim_\Bbbk H_4 (X^{n,3}) =
  \sum_{i=4}^{n-1} 2^{i-4} \binom{i}{4} =
  \frac{2^{n-7}}{3} (n^4-14n^3-83n^2-262n+384) - 1,
\end{equation*}
where the last equality is obtained using the same methods that gave
us \Cref{eq:19}.  Next, we compute the dimension of $H_4 (X^{n,3})$
using \Cref{thm:2}. By \Cref{eq:29}, the dimension of
$\bigoplus_{j=1}^{n-4} \{n-4-j; (j,1^4)\}$ is
\begin{equation*}
  \sum_{j=1}^{n-4} \binom{n}{j+4} \binom{j+3}{4} =
  \frac{2^{n-7}}{3} (n^4-14n^3-83n^2-262n+384) - 1,
\end{equation*}
with the equality coming from the same methods used to get
\Cref{eq:20}. Thus, we see that our result is compatible with
\cite[Theorem 24]{zbMATH08181349}.

As for $H_7 (X^{n,3})$, we use its description as an induced
representation from \Cref{thm:2} to obtain
\begin{equation*}
  \dim_\Bbbk H_7 (X^{n,3}) = 2^{n-4} \binom{n}{4},
\end{equation*}
which also matches \cite[Theorem 24]{zbMATH08181349}.

\section{Submaximal scale}
\label{sec:submaximal-scale}

In this section, we describe the action of the hyperoctahedral group
$\mathfrak{H}_n$ on the homology of $X^{n,n-1}$. Since the largest
distance between two vertices of $Q_n$ is $n$, we refer to the scale
$r=n-1$ as \emph{submaximal}. It is known that $X^{n,n-1}$ is homotopy
equivalent to the sphere $S^{2^{n-1}-1}$, with the proof sketched out
in \cite[Section 3]{zbMATH07553291} and \cite[Section
4]{zbMATH07819241}.  We have $H_0 (X^{n,n-1}) \cong \{n;0\}$; this
follows from \Cref{sec:scale-one}, because $X^{n,n-1}$ has all the
vertices and edges of $X^{n,1}$. Thus, we only need to determine the
group action on $H_{2^{n-1}-1} (X^{n,n-1})$. We do this with tools of
commutative algebra.

For a simplicial complex $\Delta$, let $R$ be the polynomial ring with
coefficients in the field $\Bbbk$ and variables corresponding to the
vertices of $\Delta$. The \emph{Stanley-Reisner} ideal $I_\Delta$ is
the ideal of $R$ generated the squarefree monomials that are products
of the variables in the minimal nonfaces of $\Delta$. Hochster's
Formula \cite[Theorem 5.5.1]{zbMATH01194481} establishes an
isomorphism between the (co)homology of the induced subcomplexes of
$\Delta$ and the graded components of the modules
$\operatorname{Tor}^R_i (R/I_\Delta,\Bbbk)$. If $F_\bullet$ is a free
resolution of $R/I_\Delta$ as an $R$-module, then
$\operatorname{Tor}^R_i (R/I_\Delta,\Bbbk) \cong H_i (F_\bullet
\otimes_R \Bbbk)$ \cite[Definition 2.6.4]{zbMATH00595200}.

Consider the polynomial ring
$R = \Bbbk [x_{(0^n)} , \dots, x_{(1^n)}]$ with variables indexed by
the vertices of the hypercube $Q_n$. The Stanley-Reisner ideal of
$X^{n,n-1}$ is generated by the quadratic squarefree monomials
$x_u x_{\hat{u}}$ where $\hat{u}$ is the unique vertex of $Q_n$ such
that $d(u,\hat{u})=n$. Moreover, these $2^{n-1}$ monomials form a
minimal generating set of $I_{X^{n,n-1}}$, and a regular sequence as
they are pairwise coprime \cite[Section 2]{zbMATH05718313}.
Therefore, the quotient $R/I_{X^{n,n-1}}$ has a canonical free
resolution $(K_\bullet,d_\bullet)$ oven $R$ known as the \emph{Koszul
  complex}.  We briefly review its construction (see \cite[Section
1.6]{zbMATH01194481} for more details).  Fix an ordering
$f_1,\dots,f_{2^{n-1}}$ of the minimal generators of $I_{X^{n,n-1}}$.
Let $M=\langle f_1,\dots,f_{2^{n-1}}\rangle$ be the $\Bbbk$-vector
space spanned by $f_1,\dots,f_{2^{n-1}}$.  Then,
$K_j = \bigwedge^j M \otimes_\Bbbk R(-2j)$ is a free $R$-module of
rank $\binom{2^{n-1}}{j}$ with a basis given by the exterior products
$f_{i_1} \wedge \cdots \wedge f_{i_j}$ for all choices of indices
$1\leqslant i_1 < \dots < i_j \leqslant 2^{n-1}$. The $R$-module
structure and grading are determined by the tensor factor $R(-2j)$,
which is a copy of the ring $R$ with its grading shifted so that the
multiplicative unit $1_R$ has degree $2j$.  We define a map of
$R$-modules $d_j \colon K_j \to K_{j-1}$ by setting
\begin{equation}\label{eq:27}
  d_j (f_{i_1} \wedge \cdots \wedge f_{i_j}) =
  \sum_{k=1}^j f_{i_1} \wedge \cdots \wedge \widehat{f_{i_k}} \wedge \cdots \wedge f_{i_j}
  \otimes (-1)^k f_{i_k},
\end{equation}
where $f_{i_k}$ is removed from the exterior product and multiplied as
an $R$-coefficient, then extending $R$-linearly. We have
$d_j d_{j-1} = 0$, so $(K_\bullet, d_\bullet)$ is a complex of
$R$-modules. Note that the hyperoctahedral group $\mathfrak{H}_n$ acts
on $R$ and stabilizes the vector space $M$, therefore it acts
naturally on the modules $K_j$. Moreover, the $\mathfrak{H}_n$-action
commutes with the differentials $d_j$. Therefore,
$(K_\bullet, d_\bullet)$ is also a complex of
$\mathfrak{H}_n$-representations.

\begin{proposition}\label{pro:4}
  For every integer $n\geqslant 2$, there is an isomorphism
  $H_{2^{n-1}-1} (X^{n,n-1}) \cong \bigwedge^{2^{n-1}} M$ of
  $\mathfrak{H}_n$-representations, where $M$ is the $\Bbbk$-vector
  spaced spanned by the minimal generators of the Stanley-Reisner
  ideal of $X^{n,n-1}$.
\end{proposition}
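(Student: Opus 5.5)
We combine Hochster's Formula with the Koszul resolution $(K_\bullet,d_\bullet)$ of $R/I_{X^{n,n-1}}$, keeping track of the $\mathfrak{H}_n$-action throughout. Write $N=2^{n-1}$ and let $V$ be the vertex set of $Q_n$, so $|V|=2^n=2N$.

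First, Hochster's Formula in its multigraded form gives, for each subset $W\subseteq V$,
\[
\operatorname{Tor}^R_i(R/I_\Delta,\Bbbk)_{W}\cong \widetilde{H}^{|W|-i-1}(\Delta_W;\Bbbk),
\]
where $\Delta=X^{n,n-1}$. Take $W=V$ and $i=N$. Then $|W|-i-1=2N-N-1=N-1=2^{n-1}-1$, and $\Delta_V=\Delta$. Hence $\operatorname{Tor}^R_N(R/I_\Delta,\Bbbk)_V\cong \widetilde H^{N-1}(X^{n,n-1})$.

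Second, we compute the left side from the Koszul complex. Tensoring with $\Bbbk$ kills every differential, since each $d_j$ multiplies by the $f_i$, which lie in the maximal ideal. So $\operatorname{Tor}^R_N(R/I_\Delta,\Bbbk)\cong K_N\otimes_R\Bbbk\cong\bigwedge^N M$. This is $1$-dimensional, spanned by $f_1\wedge\cdots\wedge f_N$, and its multidegree is the product of all variables, namely the squarefree degree $V$. The isomorphism is compatible with the action: $\mathfrak{H}_n$ permutes the variables and so acts on the multigraded pieces, and here the piece indexed by $V$ is stable because $V$ is fixed.

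Third, and this is the main obstacle, we must check that Hochster's isomorphism is $\mathfrak{H}_n$-equivariant, including sign conventions. The cleanest route is to compute Tor through the Koszul resolution of $\Bbbk$ over $R$ instead, that is, the complex $R/I_\Delta\otimes\bigwedge^\bullet R^{V}$. Its squarefree degree-$V$ strand is the chain complex whose terms are $\bigwedge^{|V\setminus F|}$ indexed by faces $F$. This identifies it with the simplicial cochain complex of $\Delta$, shifted, and the identification is induced by complementation $F\mapsto V\setminus F$ together with the orientation of $V$. Because of that, the isomorphism is equivariant up to a twist by the one-dimensional representation $\bigwedge^{2N}\Bbbk^V$, the sign of the permutation action of $\mathfrak{H}_n$ on $V$.

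I expect the following to resolve this twist. For $n\geqslant 2$, any element of $\mathfrak{H}_n$ induces an even permutation of $V$ in most cases. More precisely, the generators act on $V$ as products of $2^{n-1}$ or $2^{n-2}$ disjoint transpositions, which are even once $n\geqslant 3$. The case $n=2$ can be checked directly, or the twist can be tracked and seen to cancel. So the determinant character is trivial, or it can be absorbed.

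Finally, since the representations are finite-dimensional and self-dual in characteristic zero, cohomology can be traded for homology: characters of $\mathfrak{H}_n$ are real because every element is conjugate to its inverse. Since $H_{N-1}(X^{n,n-1})$ is the top nonreduced homology and coincides with reduced homology there, this yields $H_{2^{n-1}-1}(X^{n,n-1})\cong\bigwedge^{2^{n-1}}M$ as $\mathfrak{H}_n$-representations.
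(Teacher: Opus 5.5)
Your route is the paper's: the Koszul resolution of $R/I_{X^{n,n-1}}$ gives $\operatorname{Tor}^R_{2^{n-1}}\cong\bigwedge^{2^{n-1}}M$ in squarefree degree $V$, and then you apply Hochster's Formula. Your third step is more careful than the paper's proof, which simply asserts that every isomorphism used is compatible with the group action. Your analysis there is correct. The degree-$V$ strand of $R/I_{X^{n,n-1}}\otimes_{\Bbbk}\bigwedge^\bullet\Bbbk^V$ is the cochain complex of $X^{n,n-1}$ twisted by $\det\Bbbk^V=\bigwedge^{2^n}\Bbbk^V$. So equivariantly $H_{2^{n-1}-1}(X^{n,n-1})\cong\bigwedge^{2^{n-1}}M\otimes\det\Bbbk^V$, and the dualization step is harmless, as you say. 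Your parity count is also right: $(e_i,\operatorname{id})$ acts on $V$ as $2^{n-1}$ disjoint transpositions and $(0,(j\ j+1))$ as $2^{n-2}$. So for $n\geqslant 3$ the twist is trivial and your argument is a complete proof.

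The gap is the sentence dismissing $n=2$: there the twist is neither trivial nor cancelled. The element $(00,(12))$ acts on $V=\{00,01,10,11\}$ as the single transposition exchanging $01$ and $10$, so $\det\Bbbk^V\cong\{1^2;0\}$. A direct check shows the stated isomorphism actually fails.

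On one side, $H_1(X^{2,1})$ is spanned by the $4$-cycle $[00,01]+[01,11]+[11,10]+[10,00]$. Both $(01,\operatorname{id})$ and $(00,(12))$ act as reflections of the square and send this cycle to its negative. Hence $H_1(X^{2,1})\cong\{0;1^2\}$, in agreement with \Cref{thm:main1}.

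On the other side, $\bigwedge^2 M=\langle x_{(01)}x_{(10)}\wedge x_{(00)}x_{(11)}\rangle\cong\{0;2\}$, because each $e_i$ swaps the two generators of $M$ while $(12)$ fixes both. This matches your corrected formula, since $\{0;2\}\otimes\{1^2;0\}\cong\{0;1^2\}$.

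So no argument can close the case $n=2$. The proposition, and the paper's $n=2$ example, hold only for $n\geqslant 3$, which is all that \Cref{thm:mainr-1} needs. You should state your result with that restriction and with the twist by $\det\Bbbk^V$ made explicit.
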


\begin{proof}
  Since the minimal generators of $I_{X^{n,n-1}}$ form a regular
  sequence, the Koszul complex $(K_\bullet, d_\bullet)$ is a free
  resolution of $R/I_{X^{n,n-1}}$ \cite[Corollary
  1.6.14]{zbMATH01194481}. Given that
  $\Bbbk \cong R/(x_{(0^n)},\dots,x_{(1^n)})$, the complex
  $K_\bullet \otimes_\Bbbk \Bbbk$ has terms
  \begin{equation*}
    K_j \otimes_\Bbbk \Bbbk \cong \bigwedge^j M \otimes_\Bbbk \Bbbk (-2j),
  \end{equation*}
  which is a copy of $\bigwedge^j M$ concentrated in degree
  $2j$. Moreover, the differentials in $K_\bullet \otimes_\Bbbk \Bbbk$
  are all zero because tensoring with $\Bbbk$ means modding out by the
  ideal $(x_{(0^n)},\dots,x_{(1^n)})$, which kills all coefficients in
  \Cref{eq:27}. Therefore, we have
  \begin{equation*}
    \operatorname{Tor}^R_{2^{n-1}} (R/I_{X^{n,n-1}}, \Bbbk) \cong
    H_{2^{n-1}} (K_\bullet \otimes_\Bbbk \Bbbk) \cong
    \bigwedge^{2^{n-1}} M \otimes_\Bbbk \Bbbk (-2^n),
  \end{equation*}
  which has a single nonzero graded component living in degree $2^n$.
  Finally, by Hochster's Formula \cite[Theorem 5.5.1]{zbMATH01194481},
  we get
  \begin{equation*}
    H_{2^{n-1}-1} (X^{n,n-1}) \cong
    \left(
      \operatorname{Tor}^R_{2^{n-1}} (R/I_{X^{n,n-1}}, \Bbbk)
    \right)_{2^n} \cong
    \bigwedge^{2^{n-1}} M,
  \end{equation*}
  the middle term being the degree $2^n$ component of
  $\operatorname{Tor}^R_{2^{n-1}} (R/I_{X^{n,n-1}}, \Bbbk)$.  The
  isomorphisms used throughout the proof are compatible with the group
  action, so the result holds at the level of
  $\mathfrak{H}_n$-representations.
\end{proof}

\Cref{pro:4} gives a practical way to understand the action of
$\mathfrak{H}_n$ on the 1-dimensional vector space
$H_{2^{n-1}-1} (X^{n,n-1})$. By \Cref{eq:29}, $\mathfrak{H}_n$ has
only four 1-dimensional irreducible representations, namely:
\begin{itemize}
\item $\{n;0\}$, on which both subgroups $\mathbb{Z}_2^n$ and
  $\mathfrak{S}_n$ act trivially;
\item $\{1^n;0\}$, on which $\mathbb{Z}_2^n$ acts trivially and each
  transposition in $\mathfrak{S}_n$ acts by changing sign;
\item $\{0;n\}$, on which each $e_i \in \mathbb{Z}_2^n$ acts by
  changing sign and $\mathfrak{S}_n$ acts trivially;
\item $\{0;1^n\}$, on which each $e_i \in \mathbb{Z}_2^n$ and each
  transposition in $\mathfrak{S}_n$ acts by changing sign.
\end{itemize}
We illustrate some of the different possibilities.

\begin{example}
  Let $n=2$. The space $\bigwedge^2 M$ is spanned by the alternating
  tensor
  \begin{equation*}
    m = x_{(01)}x_{(10)} \wedge x_{(00)} x_{(11)}.
  \end{equation*}
  Observe that
  \begin{equation*}
    \begin{split}
      (01,\operatorname{id}_{\mathfrak{S}_2}) m
      &= x_{(00)}x_{(11)} \wedge x_{(01)} x_{(10)}
        = - m,\\
      (00,(12)) m
      &= x_{(10)}x_{(01)} \wedge x_{(00)} x_{(11)}
        = m.
    \end{split}
  \end{equation*}
  By symmetry, this shows that each $e_i\in \mathbb{Z}_2^2$ acts by
  changing sign, while $\mathfrak{S}_2$ acts trivially.  Therefore,
  $H_1 (X^{2,1}) \cong \{0;2\}$.
\end{example}

\begin{example}
  Let $n=3$. The space $\bigwedge^4 M$ is spanned by the alternating
  tensor
  \begin{equation*}
    m = x_{(011)}x_{(100)}\wedge x_{(010)}x_{(101)}\wedge
    x_{(001)}x_{(110)}\wedge x_{(000)}x_{(111)}.
  \end{equation*}
  Observe that
  \begin{equation*}
    \begin{split}
      (001,\operatorname{id}_{\mathfrak{S}_3}) m
      &= x_{(010)}x_{(101)}\wedge x_{(011)}x_{(100)}\wedge
    x_{(000)}x_{(111)}\wedge x_{(001)}x_{(110)}
        = m,\\
      (000,(23)) m
      &= x_{(011)}x_{(100)}\wedge x_{(001)}x_{(110)}\wedge
    x_{(010)}x_{(101)}\wedge x_{(000)}x_{(111)}
        = - m.
    \end{split}
  \end{equation*}
  By symmetry, this shows that $\mathbb{Z}_2^3$ acts trivially, while
  each transposition in $\mathfrak{S}_3$ acts by changing sign.
  Therefore, $H_3 (X^{3,2}) \cong \{1^3;0\}$.
\end{example}

\begin{example}
  Let $n=4$. The space $\bigwedge^8 M$ is spanned by the alternating
  tensor
  \begin{equation*}
    \begin{split}
      m = {}&x_{(0111)}x_{(1000)}\wedge x_{(0110)}x_{(1001)}\wedge x_{(0101)}x_{(1010)}
      \wedge x_{(0100)}x_{(1011)}\wedge\\
      &x_{(0011)}x_{(1100)}\wedge x_{(0010)}x_{(1101)}
      \wedge x_{(0001)}x_{(1110)}\wedge x_{(0000)}x_{(1111)}.
    \end{split}
  \end{equation*}
  Observe that
  \begin{equation*}
    \begin{split}
      (0001,\operatorname{id}_{\mathfrak{S}_4}) m =
      {}&x_{(0110)}x_{(1001)}\wedge x_{(0111)}x_{(1000)}\wedge x_{(0100)}x_{(1011)}
          \wedge x_{(0101)}x_{(1010)}\wedge\\
        &x_{(0010)}x_{(1101)}\wedge x_{(0011)}x_{(1100)}
          \wedge x_{(0000)}x_{(1111)}\wedge x_{(0001)}x_{(1110)}
      = m,\\
      (0000,(34)) m =
      {}&x_{(0111)}x_{(1000)}\wedge x_{(0101)}x_{(1010)}\wedge x_{(0110)}x_{(1001)}
          \wedge x_{(0100)}x_{(1011)}\wedge\\
        &x_{(0011)}x_{(1100)}\wedge x_{(0001)}x_{(1110)}
          \wedge x_{(0010)}x_{(1101)}\wedge x_{(0000)}x_{(1111)}
          = m.
    \end{split}
  \end{equation*}
  By symmetry, this shows that both $\mathbb{Z}_2^4$ and
  $\mathfrak{S}_4$ act trivially.  Therefore,
  $H_7 (X^{4,3}) \cong \{4;0\}$.
\end{example}

\begin{theorem}\label{thm:mainr-1}
  For every integer $n\geqslant 4$, the nonzero homology groups of
  $X^{n,n-1}$ are
  \begin{equation*}
    H_0 (X^{n,n-1}) \cong H_{2^{n-1}-1} (X^{n,n-1}) \cong \{n;0\}
  \end{equation*}
  as representations of $\mathfrak{H}_n$.
\end{theorem}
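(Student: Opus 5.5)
The plan is to combine \Cref{pro:4} with a parity count. By \Cref{pro:4}, $H_{2^{n-1}-1}(X^{n,n-1})\cong\bigwedge^{2^{n-1}}M$. Here $M$ has basis the $2^{n-1}$ monomials $f_{\{u,\hat u\}}=x_ux_{\hat u}$, one for each antipodal pair $\{u,\hat u\}$ of vertices of $Q_n$. An element $g=(s,\sigma)\in\mathfrak{H}_n$ preserves the Hamming distance, so it sends antipodal pairs to antipodal pairs. Because $R$ is commutative, $g\cdot x_ux_{\hat u}=x_{gu}x_{g\hat u}$ is exactly the basis monomial of the image pair, with no sign. Hence $M$ is the permutation representation of $\mathfrak{H}_n$ on the set $P$ of antipodal pairs. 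Its top exterior power is one-dimensional, and $g$ acts on it by $\operatorname{sgn}(g|_P)$, the sign of the permutation $g$ induces on $P$.

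It therefore suffices to show that every element of a generating set of $\mathfrak{H}_n$ induces an even permutation of $P$ when $n\geqslant 4$. Then $\bigwedge^{2^{n-1}}M$ is the trivial representation $\{n;0\}$ (compare the list of four one-dimensional irreducibles). As in the proof of \Cref{lem:1}, I would use the generators $(e_i,\operatorname{id})$ and $(0,(i\ j))$. Each is an involution, so the permutation it induces on $P$ is a product of disjoint transpositions. Its sign is $(-1)^{m/2}$, where $m$ is the number of pairs in $P$ that it moves.

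For $(e_i,\operatorname{id})$: a pair $\{u,\hat u\}$ is fixed exactly when $u+e_i\in\{u,\hat u\}$. This would require $e_i=1^n$, which is impossible for $n\geqslant 2$. So all $2^{n-1}$ pairs are moved, giving $2^{n-2}$ transpositions, which is an even number once $n\geqslant 3$.

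For $(0,(i\ j))$: the pair is fixed exactly when $\sigma u\in\{u,\hat u\}$. The case $\sigma u=\hat u$ would force $u$ and $\sigma u$ to differ in all $n$ coordinates, but they differ in at most two, so it is impossible for $n\geqslant 3$. The case $\sigma u=u$ means $u_i=u_j$, which holds for $2^{n-1}$ vertices and hence for $2^{n-2}$ pairs, since this condition is preserved by complementation. So $2^{n-1}-2^{n-2}=2^{n-2}$ pairs are moved, giving $2^{n-3}$ transpositions. This number is even exactly when $n\geqslant 4$, which is the step where the hypothesis $n\geqslant 4$ is needed; it is consistent with the $n=3$ example giving $\{1^3;0\}$.

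Finally, $H_0(X^{n,n-1})\cong\{n;0\}$ was already observed at the start of \Cref{sec:submaximal-scale}. The vanishing of all other homology groups follows from the known homotopy equivalence $X^{n,n-1}\simeq S^{2^{n-1}-1}$ cited there. Alternatively, it follows from Hochster's Formula applied to the Koszul resolution: all Tor is concentrated in degrees $2j$ at homological position $j$, so only the full vertex set, corresponding to $j=2^{n-1}$, contributes homology for $X^{n,n-1}$ itself. The main care point is the sign bookkeeping: checking that no orientation sign enters beyond the permutation sign, which holds because each generator $f$ is a commutative monomial.
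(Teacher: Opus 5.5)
Your proposal is correct and is essentially the paper's proof. After \Cref{pro:4}, the paper likewise shows that $(10^{n-1},\operatorname{id}_{\mathfrak{S}_n})$ swaps $2^{n-2}$ pairs of antipodal monomials and $(0^n,(12))$ swaps $2^{n-3}$ pairs, both even once $n\geqslant 4$; your Hochster--Koszul argument for the vanishing of the remaining homology is a self-contained substitute for the paper's appeal to $X^{n,n-1}\simeq S^{2^{n-1}-1}$.

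One caveat on the sign bookkeeping you flag: an equivariant form of Hochster's Formula in multidegree $(1,\dots,1)$ actually carries a twist by the sign of the permutation induced on the $2^n$ vertices. This is why the paper's $n=2$ example, which gives $\{0;2\}$, disagrees with \Cref{thm:main1}, which gives $\{0;1^2\}$. That sign character is trivial for $n\geqslant 3$, so neither your argument nor the paper's is affected here.
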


\begin{proof}
  As explained at the beginning of this section, we focus on the
  homology in dimension $2^{n-1}-1$.  By \Cref{pro:4}, we have
  $H_{2^{n-1}-1} (X^{n,n-1}) \cong \bigwedge^{2^{n-1}} M$, where $M$
  is the $\Bbbk$-vector space spanned by the monomials
  $x_u x_{\hat{u}} \in \Bbbk [x_{(0^n)} , \dots, x_{(1^n)}]$ such that
  $d(u,\hat{u})=n$. Recall that $\bigwedge^{2^{n-1}} M$ is
  1-dimensional, generated by the exterior product of the monomials
  $x_u x_{\hat{u}}$, which we denote by $m$.

  First, consider the action of
  $(10^{n-1},\operatorname{id}_{\mathfrak{S}_n}) \in
  \mathfrak{H}_n$. This element has order 2 and does not fix any
  monomial $x_u x_{\hat{u}}$, so each monomial $x_u x_{\hat{u}}$ is
  paired with a different monomial $x_v x_{\hat{v}}$, and
  $(10^{n-1},\operatorname{id}_{\mathfrak{S}_n})$ acts by swapping
  them. The number of pairs $x_u x_{\hat{u}},x_v x_{\hat{v}}$ being
  swapped by $(10^{n-1},\operatorname{id}_{\mathfrak{S}_n})$ is
  $2^{n-2}$, which is even for $n\geqslant 3$. This means that
  \begin{equation*}
    (10^{n-1},\operatorname{id}_{\mathfrak{S}_n}) m = (-1)^{2^{n-2}} m = m.
  \end{equation*}
  In other words, $(10^{n-1},\operatorname{id}_{\mathfrak{S}_n})$ acts
  trivially on $\bigwedge^{2^{n-1}} M$. By symmetry, the subgroup
  $\mathbb{Z}_2^n$ of $\mathfrak{H}_n$ acts trivially on
  $\bigwedge^{2^{n-1}} M$.

  Next, consider the action of $(0^n,(12)) \in \mathfrak{H}_n$. This
  element also has order 2, and it fixes all monomials
  $x_u x_{\hat{u}}$ such that the length $n$ binary string $u$ starts
  with 00 or 11.  Since $n>2$, every monomial $x_u x_{\hat{u}}$ such
  that $u$ starts with 01 or 10 is paired with a different monomial
  $x_v x_{\hat{v}}$, and $(0^n,(12))$ acts by swapping them. Note that
  if $u$ starts with 01, then $\hat{u}$ starts with 10, and vice
  versa.  Therefore, the number of pairs
  $x_u x_{\hat{u}},x_v x_{\hat{v}}$ being swapped by $(0^n,(12))$ is
  equal to half the number of length $n$ binary strings starting with
  01, i.e., $2^{n-3}$. Since $2^{n-3}$ is even for $n\geqslant 4$, we
  have
  \begin{equation*}
    (0^n,(12)) m = (-1)^{2^{n-3}} m = m.
  \end{equation*}
  Thus, $(0^n,(12))$ also acts trivially on $\bigwedge^{2^{n-1}} M$
  and, similarly, the same holds for all elements
  $(0^n,\tau)\in \mathfrak{H}_n$ where $\tau$ is a
  transposition. Since it is generated precisely by these elements,
  the subgroup $\mathfrak{S}_n$ of $\mathfrak{H}_n$ acts trivially on
  $\bigwedge^{2^{n-1}} M$.

  We conclude that
  $H_{2^{n-1}-1} (X^{n,n-1}) \cong \bigwedge^{2^{n-1}} M \cong
  \{n;0\}$.
\end{proof}

\section{Open problems}
\label{sec:open-problems}

There are several aspects of this article that we believe could use
further investigation. A few of these concern the filtration
$\{X^{n,r}_p\}_{p\geqslant 0}$ of \Cref{sec:suppl-filtr} and the
related spectral sequence. For scales two and three, the spectral
sequence degenerates at 2, meaning that all differentials in
$E^i_{p,q}$ are zero for $i\geqslant 2$.

\begin{problem}
  \begin{enumerate}
  \item Does the spectral sequence degenerate at 2 for all scales? 
  \item If not, does the spectral sequence degenerate at some index
    $i$ depending only on the scale $r$?
  \end{enumerate}
\end{problem}

Other questions concern some patterns observed at scales two and three
for the early pages of the spectral sequence. For example, the first
$r$ columns seemingly lead to the chain complex of the cube.
\begin{problem}
  Show that, for all integers $p$ with $0\leqslant p\leqslant r$, we
  have:
  \begin{itemize}
  \item $E^1_{p,0} \cong \mathcal{F}_p$, where $\mathcal{F}_\bullet$
    is the cellular chain complex of the $n$-cube;
  \item and $E^1_{p,q} = 0$ for all $q\neq 0$.
  \end{itemize}
\end{problem}
On the other hand, the later columns appear to have no homology at
all.
\begin{problem}
  \begin{enumerate}
  \item Is there an integer $s$ depending only on $r$ such that for
    all integers $p,q$ with $p> s$ we have $E^1_{p,q}=0$?
  \item If so, find an explicit formula for $s$ as a function of $r$.
  \end{enumerate}
\end{problem}
We note that for $r=2$ we have $s=3$, and for $r=3$ we have $s=4$.

Another series of questions relates to commutative algebra.  Let
$R = \Bbbk [x_{(0^n)} , \dots, x_{(1^n)}]$ and let
$I_{X^{n,r}} \subseteq R$ be the Stanley-Reisner ideal of
$X^{n,r}$. Let $\beta_{i,j}$ be the dimension of the component of
degree $j$ of $\operatorname{Tor}^R_i (R/I_{X^{n,r}})$ as a
$\Bbbk$-vector space. The integers $\beta_{i,j}$ are known as the
\emph{Betti numbers} of $R/I_{X^{n,r}}$ and carry significant
algebraic and geometric information about the ring $R/I_{X^{n,r}}$;
see \cite{zbMATH02136428} for an introduction to the subject. It
follows from Hochster's Formula that the Betti numbers of
$R/I_{X^{n,r}}$ encode the topological Betti numbers of all
subcomplexes of $X^{n,r}$. Moreover, the hyperoctahedral group
$\mathfrak{H}_n$ acts naturally on the modules
$\operatorname{Tor}^R_i (R/I_{X^{n,r}})$, and this matches the
$\mathfrak{H}_n$-action on the homology of $X^{n,r}$ and (orbits of)
its subcomplexes.  The characters of the graded components of
$\operatorname{Tor}^R_i (R/I_{X^{n,r}})$ are known as the \emph{Betti
  characters} of $R/I_{X^{n,r}}$ \cite{zbMATH07518263}. We initially
considered studying the Betti characters of $R/I_{X^{n,r}}$ to
determine how $\mathfrak{H}_n$ acts on $H_\bullet (X^{n,r})$, but this
remains open.

\begin{problem}\label{prob:open-problems-1}
  Determine the Betti numbers of $R/I_{X^{n,r}}$.
\end{problem}

\begin{problem}\label{prob:open-problems-2}
  Determine the Betti characters of $R/I_{X^{n,r}}$.
\end{problem}

In \cite{zbMATH07730801}, the authors developed an equivariant analog
of Hochster's Formula for simplicial complexes with vertex set
$\{1,\dots,n\}$ that are stable under the permutation action of
$\mathfrak{S}_n$.  The representation spanned by the vertices is
isomorphic to the vector space $\Bbbk^n$ with $\mathfrak{S}_n$ acting
as the group of $n\times n$ permutation matrices.  The natural analog
for $\mathfrak{H}_n$ would be to consider the action on $\Bbbk^n$ of
the group of $n\times n$ signed permutation matrices (see
\cite[Section 1.1]{zbMATH00047598} for an equivalent construction),
which is not a permutation representation.  However, our setting
features simplicial complexes with the same vertices as the hypercube
graph $Q_n$ that are stable under the permutation action of
$\mathfrak{H}_n$. In this case, the span of the vertices is a
$2^n$-dimensional permutation representation whose decomposition into
irreducibles leads to a more complex situation than
\cite{zbMATH07730801}.  An analog of Hochster's Formula for this new
setting would yield solutions to Problems \ref{prob:open-problems-1}
and \ref{prob:open-problems-2}, and might be of independent interest.
\begin{problem}
  Develop an equivariant analog of Hochster's Formula for simplicial
  complexes on the vertices of the hypercube graph $Q_n$ that are
  stable under the action of $\mathfrak{H}_n$.
\end{problem}
We also note that commutative algebra may provide additional ways to
study the complexes $X^{n,r}$ and their homology; see, for example,
\cite{arXiv:2605.00705}.

In a different direction, it may be interesting to study the homology
of $X^{n,r}$ from the point of view of representation stability and
FI-modules pioneered in \cite{zbMATH06264399,zbMATH06464862}. Our
results in \Cref{thm:main0,thm:main1,thm:1,thm:2,thm:mainr-1} suggest
that, under the natural inclusions
$X^{n,r} \hookrightarrow X^{n+1,r}$, $H_\bullet (X^{n,r})$ may enjoy
some form of representation stability.
\begin{problem}
  \begin{enumerate}
  \item Determine if $H_\bullet (X^{n,r})$ is a family of
    $\text{FI}_{BC}$-modules in the sense of \cite{zbMATH06350922},
    which is the natural framework for actions of hyperoctahedral
    groups.
  \item If so, determine if $H_\bullet (X^{n,r})$ is finitely
    generated and find the degrees of generation.
  \end{enumerate}
\end{problem}

Finally, \Cref{thm:main0,thm:main1,thm:1,thm:2,thm:mainr-1} show that
at scales $r\leqslant 3$ and $r=n-1$ the nonzero homology groups
$H_\bullet (X^{n,r})$ are \emph{multiplicity-free} representations of
$\mathfrak{H}_n$, meaning that no irreducible representation of
$\mathfrak{H}_n$ appears as a direct summand in their decomposition
more than once. Multiplicity-free representations are related to
special phenomena in geometry and combinatorics, especially for
algebraic groups \cite{zbMATH01211709,zbMATH07830344}, but also for
finite groups \cite{zbMATH03257452}.
\begin{problem}
  Is $H_i (X^{n,r})$ a multiplicity-free representation of
  $\mathfrak{H}_n$ for all values of $n,r$ and $i$?
\end{problem}

\appendix

\section{Irreducible representations of hyperoctahedral groups}
\label{sec:irred-repr-hyper}

\subsection{Weight spaces}
\label{sec:weight-spaces}

A \emph{linear character} of $\mathbb{Z}_2^n$ is a group homomorphism
$\psi \colon \mathbb{Z}_2^n \to \mathbb{C}^\times$. Every nontrivial
element $s\in \mathbb{Z}_2^n$ has order two, so $\psi (s)^2 =1$ or,
equivalently, $\psi (s) = \pm 1$; this allows us to replace
$\mathbb{C}^\times$ with the multiplicative group of an arbitrary
field $\Bbbk$ of characteristic zero.  The set of linear characters of
$\mathbb{Z}_2^n$ over $\Bbbk$ is a group under ``pointwise
multiplication'', and it is known as the \emph{character group} of
$\mathbb{Z}_2^n$ over $\Bbbk$. More formally, the character group is
$\operatorname{Hom}_\mathbb{Z} (\mathbb{Z}_2^n, \Bbbk^\times)$.

For $w\in \mathbb{Z}_2^n$, we define
\begin{equation*}
  \begin{split}
    \psi_w \colon \mathbb{Z}_2^n &\longrightarrow \Bbbk^\times\\
    s &\longmapsto (-1)^{w\cdot s}
  \end{split}
\end{equation*}
where $w\cdot s$ is the dot product of $w$ and $s$ as vectors in
$\mathbb{R}^n$. The function $\psi_w$ is a linear character of
$\mathbb{Z}_2^n$, and the assignment $w\mapsto \psi_w$ defines a group
homomorphism from $\mathbb{Z}_2^n$ to the character group
$\operatorname{Hom}_\mathbb{Z} (\mathbb{Z}_2^n, \Bbbk^\times)$.  In
fact, this function is a group isomorphism
$\mathbb{Z}_2^n \cong \operatorname{Hom}_\mathbb{Z} (\mathbb{Z}_2^n,
\Bbbk^\times)$ (see \cite[Section 4.3]{zbMATH05943514}).

Let $V$ be a representation of $\mathbb{Z}_2^n$ over $\Bbbk$. For
every $w\in \mathbb{Z}_2^n$, define the \emph{weight space} of $V$
with weight $w$ as
\begin{equation*}
  V_w = \{ v\in V \,|\, \forall s\in\mathbb{Z}_2^n, sv=\psi_w (s) v\};
\end{equation*}
the nonzero elements of $V_w$ are the \emph{weight vectors} of $V$
with weight $w$.  Since multiplication by $s$ has order two, it
diagonalizes (see \cite[Corollary 3.3.10]{zbMATH06125590}); then, we can
think of $V_w$ as an intersection of eigenspaces
\begin{equation*}
  V_w =  \bigcap_{s\in\mathbb{Z}_2^n} \{ v\in V \,|\, sv=\psi_w (s) v\}.
\end{equation*}
For example, if $w$ is the identity element in $\mathbb{Z}_2^n$, then
$V_w$ is the invariant subspace of $V$, i.e., the set of elements of
$V$ on which $\mathbb{Z}_2^n$ acts trivially. Moreover, since
$\mathbb{Z}_2^n$ is abelian, the operators corresponding to
multiplication by the group elements can be simultaneously
diagonalized (see \cite[Theorem 1.3.21]{zbMATH06125590}), i.e., $V$
admits a basis whose elements are eigenvectors of all $2^n$
operators. Thus, every representation of $\mathbb{Z}_2^n$ is
isomorphic to the direct sum of its weight spaces. Every subspace of a
weight space $V_w$ is stable under the $\mathbb{Z}_2^n$-action, so
$V_w$ further decomposes into a direct sum of 1-dimensional
subrepresentations which are irreducible by virtue of their
dimension. For every $w\in \mathbb{Z}_2^n$, we define the
1-dimensional $\Bbbk$-vector space $U_w = \langle u\rangle$ with the
action $su=\psi_w (s) u$ for every $s\in \mathbb{Z}_2^n$. We conclude
that $\mathbb{Z}_2^n$ has exactly $2^n$ irreducible representations,
which are isomorphic to the $U_w$ for $w\in \mathbb{Z}_2^n$.

\subsection{Construction and dimension}
\label{sec:constr-dimens}

Let $V$ be a representation of $\mathfrak{H}_n$. We identify
$\mathbb{Z}_2^n$ with the subgroup
$\{(s,1_{\mathfrak{S}_n}) \,|\, s\in \mathbb{Z}_2^n\}$; this makes $V$
a representation of $\mathbb{Z}_2^n$ which gives it a decomposition
into weight spaces. For every $s,w\in \mathbb{Z}_2^n$, $v\in V_w$ and
$\sigma \in \mathfrak{S}_n$, we have
\begin{equation*}
  (s,1_{\mathfrak{S}_n}) (0,\sigma) v =
  (0,\sigma) (\sigma^{-1} s,1_{\mathfrak{S}_n}) v =
  (0,\sigma) \psi_w (\sigma^{-1} s) v
\end{equation*}
and also
\begin{equation*}
  \psi_w (\sigma^{-1} s) =
  (-1)^{w\cdot \sigma^{-1} s} =
  (-1)^{\sigma w\cdot s} =
  \psi_{\sigma w} (s);
\end{equation*}
together, these equalities give
\begin{equation*}
  (s,1_{\mathfrak{S}_n}) (0,\sigma) v =
  \psi_{\sigma w} (s) (0,\sigma) v,
\end{equation*}
which means $(0,\sigma) v \in V_{\sigma w}$. If we identify
$\mathfrak{S}_n$ with the subgroup
$\{(0,\sigma ) \,|\, \sigma \in \mathfrak{S}_n \}$, then we can say
that $\sigma \in \mathfrak{S}_n$ acts by sending the weight space
$V_w$ into the weight space $V_{\sigma w}$. A similar argument with
$\sigma^{-1}$ implies that $\sigma V_w = V_{\sigma w}$. In other
words, $\mathfrak{S}_n$ acts on $V$ by permuting its weight
spaces. This implies that a weight space $V_w$ is nonzero if and only
if all the weight spaces $V_{w'}$ with $w'$ in the
$\mathfrak{S}_n$-orbit of $w$ are nonzero. Also, if
$\sigma \in \operatorname{Stab}_{\mathfrak{S}_n} (w)$, the stabilizer
of $w$ in $\mathfrak{S}_n$, then $V_{\sigma w} = V_w$; this makes
$V_w$ a representation of $\operatorname{Stab}_{\mathfrak{S}_n} (w)$.

Suppose now that $V$ is an irreducible representation of
$\mathfrak{H}_n$. The argument of the previous paragraph implies that
$V$ is only allowed weight spaces belonging to a single
$\mathfrak{S}_n$-orbit. Two elements of $\mathbb{Z}_2^n$ are in the
same $\mathfrak{S}_n$-orbit when they have the same numbers of zeros
and ones. In light of this, we choose the elements
\begin{equation*}
  w_k = (\underbrace{0,\dots,0}_{n-k},\underbrace{1,\dots,1}_k),
\end{equation*}
with $0\leqslant k\leqslant n$ to represent all the different orbits.
The stabilizer of $w_k$ in $\mathfrak{S}_n$ consists of the products
$\sigma_1 \sigma_2$ with
$\sigma_1 \in \mathfrak{S}_{n-k} \subset \mathfrak{S}_n$ permuting the
first $n-k$ coordinates of $\mathbb{Z}_2^n$ and
$\sigma_2 \in \mathfrak{S}_k \subset \mathfrak{S}_n$ permuting the
remaining $k$, so we have a group isomorphism
$\operatorname{Stab}_{\mathfrak{S}_n} (w_k) \cong \mathfrak{S}_{n-k}
\times \mathfrak{S}_k$. One should also expect the weight space
$V_{w_k}$ to be irreducible as a representation of
$\operatorname{Stab}_{\mathfrak{S}_n} (w_k)$; otherwise, if
$V_{w_k} = V_{w_k}' \oplus V_{w_k}''$, each summand would generate a
subrepresentation of $V$ under the $\mathfrak{H}_n$-action.  This
makes $V_{w_k}$ isomorphic to a tensor product of Specht modules, one
for $\mathfrak{S}_{n-k}$ and one for $\mathfrak{S}_k$. Combining the
actions of $\mathbb{Z}_2^n$ and
$\operatorname{Stab}_{\mathfrak{S}_n} (w_k)$, we have an isomorphism
$V_{w_k} \cong U_{w_k} \otimes \{\lambda\} \otimes \{\mu\}$ of
representations of
$\mathbb{Z}_2^n \rtimes (\mathfrak{S}_{n-k} \times \mathfrak{S}_k)$,
for some partitions $\lambda$ of $n-k$ and $\mu$ of $k$, and with
$U_{w_k}$ as defined in \Cref{sec:weight-spaces}. The other weight
spaces are $V_{\sigma w_k} = (0,\sigma) V_{w_k}$ for every
$(0,\sigma)$ representing a different left coset of
$\mathbb{Z}_2^n \rtimes (\mathfrak{S}_{n-k} \times \mathfrak{S}_k)$ in
$\mathfrak{H}_n$. Thus, $V$ can be identified with
\begin{equation}\label{eq:28}
  \operatorname{Ind}^{\mathfrak{H}_n}_{\mathbb{Z}_2^n
    \rtimes (\mathfrak{S}_{n-k} \times \mathfrak{S}_k)}
  \left( U_{w_k} \otimes \{\lambda\} \otimes \{\mu\} \right).
\end{equation}
We denote the representation in \Cref{eq:28} by $\{\lambda;\mu\}$. The
general theory guarantees that $\{\lambda;\mu\}$ is irreducible and
every irreducible representation of $\mathfrak{H}_n$ is isomorphic to
$\{\lambda;\mu\}$ for some bipartition $(\lambda,\mu)$ of $n$.

\begin{example}\label{exa:2}
  Consider the case $\lambda = (n-i)$ and $\mu = (i)$ with
  $0\leqslant i\leqslant n$, so that both $\{\lambda\}$ and $\{\mu\}$
  are trivial representations.  The weight space of weight $w_i$ in
  $\{n-i;i\}$ is the 1-dimensional vector space
  $U_{w_i} \otimes \{n-i\} \otimes \{i\}$.  For $u\in U_{w_i}$,
  $x\in \{n-i\}$ and $y\in \{i\}$, we have
  \begin{equation*}
    (s,\sigma_1 \sigma_2) u\otimes x\otimes y =
    (-1)^{w_i \cdot s} u \otimes x \otimes y,
  \end{equation*}
  so the action of
  $(s,\sigma_1 \sigma_2) \in \mathbb{Z}_2^n \rtimes
  \operatorname{Stab} (w_i) \cong \mathbb{Z}_2^n \rtimes
  (\mathfrak{S}_{n-i} \times \mathfrak{S}_i)$ is completely determined
  by the parity of the last $i$ entries of $s$. The other weight
  spaces correspond to permutations that do not stabilize $w_i$, so
  they have actions similar to the one above but determined by the
  parity of a different subset of entries of $s\in \mathbb{Z}_2^n$.
\end{example}

This account follows a general construction for the irreducible
representations of the semidirect product of a finite group acting on
an abelian group which can be found in \cite[Section 5.27]{zbMATH05943514}.
From this construction, we also get a formula for the dimension of
$\{\lambda; \mu\}$. Since the index of
$\mathbb{Z}_2^n \rtimes (\mathfrak{S}_k \times \mathfrak{S}_{n-k})$ in
$\mathfrak{H}_n$ is $\binom{n}{k}$, it follows from the definition of
induction that
\begin{equation}\label{eq:29}
  \dim_\Bbbk \{\lambda; \mu\} = \binom{n}{k} \dim_\Bbbk \{\lambda\}
  \dim_\Bbbk \{\mu\}.
\end{equation}
A dimension formula for the Specht modules $\{\lambda\}$ and $\{\mu\}$
can be found, for example, in \cite[Theorem 3.10.2]{zbMATH01601795}.

\section{More on the cellular complex of the cube}
\label{sec:more-cell-compl}

In \Cref{lem:1}, we described the representation-theoretic structure
of the cellular chain complex of the $n$-cube. We show here that this
structure determines the complex in an essentially unique way. The
next result allows us to identify the complex
$(E^1_{\bullet,0} d^1_{\bullet,0})$ in the spectral sequence for the
homology of $X^{n,r}$ without performing any computations, at the cost
of working over an algebraically closed field. This assumption is
needed only because of the stronger implications of Schur's Lemma over
algebraically closed fields.

\begin{proposition}\label{pro:5}
  Assume the field $\Bbbk$ is algebraically closed.  The cellular
  chain complex $(\mathcal{F}_\bullet, \partial_\bullet)$ of the unit
  $n$-cube $[0,1]^n$ is, up to isomorphism, the unique complex of
  $\mathfrak{H}_n$-representations over $\Bbbk$ satisfying the
  following conditions.
  \begin{enumerate}
  \item $\mathcal{F}_i \cong \operatorname{Ind}^{\mathfrak{H}_n}
    _{\mathfrak{S}_{n-i} \times \mathfrak{H}_i}
    (\{n-i\} \otimes \{0;1^i\})$
  \item $\partial_i ( \{n-i\} \otimes \{0;1^i\} ) \subseteq
    \{n-i\} \otimes \operatorname{Ind}^{\mathfrak{H}_i}
    _{\mathfrak{S}_1 \times \mathfrak{H}_{i-1}}
    (\{1\} \otimes \{0;1^{i-1}\})$
  \end{enumerate}
\end{proposition}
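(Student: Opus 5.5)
The plan is to show that any complex $(\mathcal{F}'_\bullet,\partial'_\bullet)$ satisfying (1) and (2) is isomorphic to the cellular complex, by building an isomorphism degree by degree. First, \Cref{lem:1} shows that the cellular complex itself satisfies both conditions. Condition (1) is exactly its statement. For condition (2), the generator $f=\{0\}^{n-i}\times[0,1]^i$ spans $\{n-i\}\otimes\{0;1^i\}$, and by \Cref{eq:13} its boundary lies in the span of the $\mathfrak{H}_i$-orbit of the facet $\{0\}^{n-i+1}\times[0,1]^{i-1}$. This span is the stated induced representation, tensored with the trivial $\{n-i\}$.

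The key step is a uniqueness computation. Fix a generator $u_i$ of the one-dimensional subspace $\{n-i\}\otimes\{0;1^i\}\subseteq\mathcal{F}'_i$. By Frobenius reciprocity and (1), an $\mathfrak{H}_n$-map $\mathcal{F}'_i\to\mathcal{F}'_{i-1}$ is determined by the image of $u_i$. That image must be a vector on which $\mathfrak{S}_{n-i}\times\mathfrak{H}_i$ acts by the character $\{n-i\}\otimes\{0;1^i\}$. Condition (2) confines $\partial'_i(u_i)$ to $W=\operatorname{Ind}^{\mathfrak{H}_i}_{\mathfrak{S}_1\times\mathfrak{H}_{i-1}}(\{1\}\otimes\{0;1^{i-1}\})$. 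So it suffices to show that $\{0;1^i\}$ occurs in $W$ with multiplicity exactly one. By Frobenius reciprocity, this multiplicity equals the multiplicity of $\{1\}\otimes\{0;1^{i-1}\}$ in the restriction of $\{0;1^i\}$ to $\mathfrak{S}_1\times\mathfrak{H}_{i-1}$. The restriction of the one-dimensional $\{0;1^i\}$ has $e_1$ acting by $-1$. This character is therefore $\{0;1\}\otimes\{0;1^{i-1}\}$, which does not match. So I must be careful about which group $\mathfrak{S}_1$ embeds as. In (2) the factor $\mathfrak{S}_1$ is the stabilizer of the fixed coordinate of the facet, and the sign flip $e_j$ for that coordinate maps $f^0_j$ to $f^1_j$ rather than stabilizing the facet. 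The cleaner way is to view $W$ as the span of all $2i$ facets, i.e.\ as induced from the stabilizer $\mathfrak{S}_1\times\mathfrak{H}_{i-1}$ acting on the facet $\{0\}\times[0,1]^{i-1}$. Mackey/Frobenius then gives $\operatorname{Hom}_{\mathfrak{H}_i}(\{0;1^i\},W)$ one-dimensional, since the restriction of $\{0;1^i\}$ to $\mathfrak{S}_1\times\mathfrak{H}_{i-1}$ is $\{1\}\otimes\{0;1^{i-1}\}$. Hence $\partial'_i(u_i)=c_i\,\partial_i(f)$ for some scalar $c_i$, after matching generators.

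Next, I rule out $c_i=0$ and rescale. Using the known cellular differentials and the complex condition $\partial'_{i-1}\partial'_i=0$ alone does not force $c_i\neq0$. So I would add the standing assumption implicit in the statement, that the complex is nondegenerate with the same homology, or else read ``essentially unique'' as unique up to rescaling by nonzero scalars with $c_i\neq 0$. Assuming $c_i\neq0$, I define the isomorphism $\phi_i\colon\mathcal{F}'_i\to\mathcal{F}_i$ inductively. I set $\phi_0$ to send $u_0$ to $[0^n]$. Given $\phi_{i-1}$, which sends $u_{i-1}$ to $\lambda_{i-1}$ times the cellular generator, I set $\phi_i(u_i)=\lambda_{i-1}c_i f$ and extend $\mathfrak{H}_n$-linearly. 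Then $\partial_i\phi_i=\phi_{i-1}\partial'_i$ holds on $u_i$, hence everywhere, because both sides are $\mathfrak{H}_n$-maps determined by the image of the generator.

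The main obstacle is the multiplicity-one computation in the second step. One must identify precisely which copy of $\mathfrak{S}_1\times\mathfrak{H}_{i-1}$ is meant in (2), so that Frobenius reciprocity gives exactly one copy of $\{0;1^i\}$. If algebraic closure is genuinely needed, it enters here: Schur's Lemma gives $\operatorname{End}(\{0;1^i\})=\Bbbk$, which makes the Hom spaces one-dimensional rather than merely division algebras.
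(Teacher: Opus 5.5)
Your argument is correct once the extra hypothesis you identify is added, and that hypothesis really is needed. The complex with the terms in (1) and all differentials zero satisfies (1) and (2). Its $H_0$ has dimension $2^n$, so it is not isomorphic to the cellular complex. The paper's proof makes the same tacit assumption when it passes from the uniqueness of a \emph{nonzero} map to $\partial_i=\psi_i\phi_i$. Granting $\partial_i\neq 0$ for all $i$, your route differs from the paper's mainly in how it is organized.

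\textbf{The paper's route.} It factors $\partial_i$ as $\mathcal{F}_i\xrightarrow{\phi_i}\mathcal{G}_i\xrightarrow{\psi_i}\mathcal{F}_{i-1}$ through the induced faces representation $\mathcal{G}_i$. It pins down $\phi_i$ by decomposing $W\cong\{1;1^{i-1}\}\oplus\{0;(2,1^{i-2})\}\oplus\{0;1^i\}$ with the Littlewood--Richardson rule. It then pins down $\psi_i$ by a separate Pieri-rule argument for $\mathfrak{S}_{n-i+1}$.

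\textbf{Your route.} You use that $u_i$ generates $\mathcal{F}'_i$, so $\partial'_i$ is determined by $\partial'_i(u_i)$. This vector lies in the $\{n-i\}\otimes\{0;1^i\}$-eigenline of $\{n-i\}\otimes W$, and Frobenius reciprocity shows in one line that this line is one-dimensional. The $\psi_i$ step becomes unnecessary. Here condition (2) is read, as both you and the paper implicitly do, with $\{n-i\}\otimes W$ equal to the $\mathfrak{S}_{n-i}\times\mathfrak{H}_i$-span of the generator $u_{i-1}$ of $\mathcal{F}'_{i-1}$. Then $\phi_{i-1}$ carries this subspace to the corresponding span in the cellular complex, which is the precise content of your phrase ``after matching generators.'' You also make the inductive rescaling $\lambda_i=\lambda_{i-1}c_i$ explicit, which the paper leaves implicit.

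\textbf{Two smaller points.}
\begin{itemize}
\item Your mid-proof worry has a simple resolution. $\mathfrak{S}_1$ is the trivial group, so $\mathfrak{S}_1\times\mathfrak{H}_{i-1}\cong\mathfrak{H}_{i-1}$ contains no sign change on the distinguished coordinate. Hence the restriction of $\{0;1^i\}$ is $\{1\}\otimes\{0;1^{i-1}\}$, which matches the single copy of $\{0;1^i\}$ in the paper's decomposition of $W$.
\item Your argument never needs algebraic closure. After Frobenius reciprocity, the only Hom space you use is between two copies of the same one-dimensional character, and that space is $\Bbbk$ over any field of characteristic zero.
\end{itemize}
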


\begin{proof}
  We start by showing that $(\mathcal{F}_\bullet, \partial_\bullet)$
  satisfies both conditions. The first condition is the content of
  \Cref{lem:1}, so we turn to the second one.  The $i$-cell
  $\{0\}^{n-i} \times [0,1]^i$ spans a representation of
  $\mathfrak{S}_{n-i} \times \mathfrak{H}_i$ isomorphic to
  $\{n-i\} \otimes \{0;1^i\}$. The differential $\partial_i$ takes the
  $i$-cell $\{0\}^{n-i} \times [0,1]^i$ to a linear combination of its
  faces, which consist of points with the first $n-i$ coordinates
  equal to 0 and a single other coordinate equal to 0 for a front
  face, or 1 for a back face. As such, these faces are all in the
  $\mathfrak{S}_{n-i} \times \mathfrak{H}_i$ orbit of the face
  $\{0\}^{n-i} \times \{0\} \times [0,1]^{i-1}$, which spans a
  representation of
  $\mathfrak{S}_{n-i} \times \mathfrak{S}_{1} \times
  \mathfrak{H}_{i-1}$ isomorphic to
  $\{n-i\} \otimes \{1\} \otimes \{0;1^{i-1}\}$. In other words, the
  faces of $\{0\}^{n-i} \times [0,1]^i$ span a representation of
  $\mathfrak{S}_{n-i} \times \mathfrak{H}_i$ isomorphic to
  \begin{equation}\label{eq:30}
    \begin{split}
      &\operatorname{Ind}^{\mathfrak{S}_{n-i} \times \mathfrak{H}_i}
      _{\mathfrak{S}_{n-i} \times \mathfrak{S}_{1} \times \mathfrak{H}_{i-1}}
      (\{n-i\} \otimes \{1\} \otimes \{0;1^{i-1}\})\\
      \cong
      &\{n-i\} \otimes \operatorname{Ind}^{\mathfrak{H}_i}
      _{\mathfrak{S}_{1} \times \mathfrak{H}_{i-1}}
      (\{1\} \otimes \{0;1^{i-1}\})
    \end{split}
  \end{equation}
  and $\partial_i$ sends the cell $\{0\}^{n-i} \times [0,1]^i$ into
  this subspace. This shows the second condition holds.

  Now, we prove the uniqueness of the complex. Inducing from
  $\mathfrak{S}_{n-i} \times \mathfrak{H}_i$ to $\mathfrak{H}_n$ in
  \Cref{eq:30}, gives the $\mathfrak{H}_n$-representation
  \begin{equation*}
    \begin{split}
      \mathcal{G}_i
      &= \operatorname{Ind}^{\mathfrak{H}_n}_{\mathfrak{S}_{n-i} \times \mathfrak{H}_i}
        \operatorname{Ind}^{\mathfrak{S}_{n-i} \times \mathfrak{H}_i}
        _{\mathfrak{S}_{n-i} \times \mathfrak{S}_{1} \times \mathfrak{H}_{i-1}}
        (\{n-i\} \otimes \{1\} \otimes \{0;1^{i-1}\})\\
      &\cong \operatorname{Ind}^{\mathfrak{H}_n}
        _{\mathfrak{S}_{n-i} \times \mathfrak{S}_{1} \times \mathfrak{H}_{i-1}}
        (\{n-i\} \otimes \{1\} \otimes \{0;1^{i-1}\})
    \end{split}
  \end{equation*}
  after using transitivity of induction.  The second condition in the
  proposition implies that the differential
  $\partial_i \colon \mathcal{F}_i \to \mathcal{F}_{i-1}$ must factor
  as $\mathcal{F}_i \to \mathcal{G}_i \to \mathcal{F}_{i-1}$.  By
  \cite[Lemma III.6, Theorem III.2]{zbMATH03590472}, we have
  \begin{equation*}
    \begin{split}
      &\operatorname{Ind}^{\mathfrak{H}_i}
      _{\mathfrak{S}_{1} \times \mathfrak{H}_{i-1}}
        (\{1\} \otimes \{0;1^{i-1}\})\\
      \cong{}
      &\operatorname{Ind}^{\mathfrak{H}_i}
      _{\mathfrak{H}_{1} \times \mathfrak{H}_{i-1}}
      \left( (\{1;0\} \oplus \{0;1\}) \otimes \{0;1^{i-1}\} \right)\\
      \cong{}
      &\{1;1^{i-1}\} \oplus \{0;(21^{i-2})\} \oplus \{0;1^i\}.
    \end{split}
  \end{equation*}
  Using Schur's Lemma for algebraically closed fields \cite[Corollary
  2.3.10]{zbMATH05943514}, there is, up to multiplication by a nonzero
  scalar, a unique nonzero map
  \begin{equation*}
    \{n-i\} \otimes \{0;1^i\} \longrightarrow
    \{n-i\} \otimes \operatorname{Ind}^{\mathfrak{H}_i}
    _{\mathfrak{S}_{1} \times \mathfrak{H}_{i-1}}
    (\{1\} \otimes \{0;1^{i-1}\})
  \end{equation*}
  of $\mathfrak{S}_{n-i} \times \mathfrak{H}_i$-representations. This
  map is injective because the domain is 1-dimensional.  Inducing from
  $\mathfrak{S}_{n-i} \times \mathfrak{H}_i$ to $\mathfrak{H}_n$ gives
  an injection $\phi_i \colon \mathcal{F}_i \to \mathcal{G}_i$ of
  $\mathfrak{H}_n$-representations because induction between finite
  groups is an exact functor. Similarly, using Pieri's rule \cite[Section
  2.2]{zbMATH01001729}, we have
  \begin{equation*}
    \operatorname{Ind}^{\mathfrak{S}_{n-i+1}}
    _{ \mathfrak{S}_{n-i} \times \mathfrak{S}_{1}}
    (\{n-i\} \otimes \{1\})
    \cong \{n-i+1\} \oplus \{(n-i,1)\}.
  \end{equation*}
  By Schur's Lemma for algebraically closed fields, there is, up to
  multiplication by a nonzero scalar, a unique nonzero map
  \begin{equation*}
    \operatorname{Ind}^{\mathfrak{S}_{n-i+1}}
    _{ \mathfrak{S}_{n-i} \times \mathfrak{S}_{1}}
    (\{n-i\} \otimes \{1\})
    \otimes \{0;1^{i-1}\}
    \longrightarrow  \{n-i+1\} \otimes \{0;1^{i-1}\}
  \end{equation*}
  of $\mathfrak{S}_{n-i+1} \times \mathfrak{H}_{i-1}$-representations.
  This map is surjective because the codomain is 1-dimensional.
  Inducing from $\mathfrak{S}_{n-i+1} \times \mathfrak{H}_{i-1}$ to
  $\mathfrak{H}_n$ gives a surjection
  $\psi_i \colon \mathcal{G}_i \to \mathcal{F}_{i-1}$ of
  $\mathfrak{H}_n$-representations because induction between finite
  groups is exact. Altogether, this shows that
  $\partial_i = \psi_i \phi_i$ where $\psi_i$ and $\phi_i$ are
  uniquely determined up to multiplication by a nonzero
  scalar. Therefore, $\partial_i$ is also uniquely determined up to
  multiplication by a nonzero scalar, which makes the complex
  $(\mathcal{F}_\bullet, \partial_\bullet)$ unique up to isomorphism.
\end{proof}

\section{Computational methods}
\label{sec:comp-meth}

Some of the results in this paper depend on software computations. We
provide here a brief explanation for the setup behind these
computations. All code is available at
\url{https://github.com/galettof/VietorisRipsHypercube}. We report
approximate compute times for our reference machines, which include:
\begin{itemize}
\item a Dell XPS laptop with an Intel Core i9-12900HK processor and 64
  GB of RAM, and
\item an HP EliteDesk 800 G3 desktop with an Intel Core i5-7500
  processor and 64 GB of RAM.
\end{itemize}

\subsection{Macaulay2 package}
\label{sec:macaulay2-package}

Macaulay2 \cite{M2} is a software system devoted to supporting
research in algebraic geometry and commutative algebra. Its core
functionality includes Gröbner basis algorithms to solve problems in
commutative and homological algebra. Although the primary focus of
Macaulay2 is not on algebraic or discrete topology, we chose it for
its convenience. In particular:
\begin{itemize}
\item it handles simplicial complexes via the
  \texttt{SimplicialComplexes} package
  \cite{zbMATH07771378,SimplicialComplexesSource};
\item it handles finite group characters via the
  \texttt{BettiCharacters} package
  \cite{zbMATH07771377,BettiCharactersSource};
\item we are particularly familiar with it.
\end{itemize}
The \texttt{VietorisRipsHypercube} package for Macaulay2 is designed
to experiment with homology computations for Vietoris-Rips complexes
of hypercubes, especially using the filtration introduced in
\Cref{sec:suppl-filtr}. It offers the following functionality.
\begin{itemize}
\item Constructs the Vietoris-Rips complex $X^{n,r}$ as a simplicial
  complex via its Stanley-Reisner ideal.
\item Computes the complex $C^{p,r}_{\bullet,p}$ of simplices with
  cubic dimension $p$ in $X^{p,r}$.
\item Computes the homology of $C^{p,r}_{\bullet,p}$, giving an
  explicit presentation when nonzero.
\item Computes the character of the hyperoctahedral group
  $\mathfrak{H}_p$ on the homology of $C^{p,r}_{\bullet,p}$.
\item Constructs generators for the image of the differentials
  $d^1_{i,0} \colon E^1_{i,0} \to E^1_{i-1,0}$ in the first page of
  the spectral sequence for the homology of $X^{n,r}$.
\end{itemize}
The reader may consult the package documentation in our Github
repository.

This Macaulay2 package can perform many computations for $X^{n,r}$
with $n\leqslant 7$ at scales $r=2$ and $r=3$. The complex
$C^{7,3}_{\bullet,7}$ took approximately 11 hours to compute on one of
our reference machines. However, the use of this package becomes
impractical for $n\geqslant 8$ or at larger scales, even on more
powerful machines. This led us to develop specialized Python code to
handle some remaining cases needed for the proof of \Cref{thm:2}.

\subsection{Python code}
\label{sec:python-code}

We outline the algorithms behind the computations used in
\Cref{sec:cubic-dimens-eight}. A Python implementation is available in
our Github repository. First, we list all simplices in $X^{n,3}$
having diameter 3 and cubic dimension $p$, with
$8\leqslant p\leqslant 9$. The strategy depends on the dimension.

\begin{description}
\item[Dimension $p-2$] Start with a simplex
  $\Delta_k = [z,u,v_4,\dots,v_k,w_{k+1},\dots,w_p]$ from
  \Cref{pro:3}, \Cref{item:6}. Act with the left cosets of the
  subgroup
  $G_k = \mathfrak{S}_{\{3,\dots,k\}} \times
  \mathfrak{S}_{\{k+1,\dots,p\}}$ from \Cref{pro:3}, \Cref{item:8}, in
  $\mathfrak{H}_p$ to obtain all other simplices in the
  $\mathfrak{H}_p$-orbit of $\Delta_k$. Since $G_k$ may be smaller
  than the stabilizer of $\Delta_k$, there may be some redundancy, but
  this is still more efficient than acting with the entire group
  $\mathfrak{H}_p$. Repeat for all $\Delta_k$ in \Cref{pro:3},
  \Cref{item:6}.
\item[Dimension $p-1$] Start with a simplex $\Delta_k$ and add a
  vertex $x$ so that $\Delta_k \cup \{x\}$ is still in $X^{n,3}$. The
  stabilizer of $\Delta_k \cup \{x\}$ may not contain the subgroup
  $G_k$, so we also take all simplices $\Delta_k \cup \{y\}$ in the
  $G_k$-orbit of $\Delta_k \cup \{x\}$. Act with the left cosets of
  $G_k$ in $\mathfrak{H}_p$
  on all the simplices $\Delta_k \cup \{y\}$ obtained this way to
  recover all simplices in the $\mathfrak{H}_p$-orbit of
  $\Delta_k \cup \{x\}$. Repeat for all $\Delta_k$ in \Cref{pro:3},
  \Cref{item:6}.
\item[Dimension $p$ and $p+1$] Follow the steps for dimension
  $p-1$. Our code adds two or three vertices to $\Delta_k$ in all
  possible ways so that the resulting simplex is still in
  $X^{n,3}$. For $p=9$, we only go up to dimension $p$, since this is
  all that is needed for \Cref{thm:2}.
\end{description}

Next, we compute matrices for the restriction of the boundary map to
the simplices found above; these are the matrices $d'_1$, $d'_2$, and
$d'_3$ of \Cref{sec:cubic-dimens-eight}.  Working over $\mathbb{Z}/2$
allows us to ignore orientation, which simplifies the code. The
matrices are saved in SciPy Sparse NPZ format \cite{2020SciPy-NMeth}.

Finally, we compute the ranks of the matrices above. Since we could
not find a Python library to compute the rank of a sparse matrix over
$\mathbb{Z}/2$, we implemented a simple Gaussian elimination
algorithm, although this does not account for sparseness.  On one of
our reference machines, it took about 2 minutes to construct the three
matrices for $p=8$, and about 17 minutes to construct the two matrices
for $p=9$. Computing their ranks took a total of about 3 hours.

We also provide an option to save the matrices in a sparse format
compatible with Magma \cite{MR1484478}, which implements a more
efficient algorithm to compute the rank of sparse matrices over finite
fields. The same rank computations take only a few minutes in Magma;
however, Magma requires a license.

\subsection{A Polymake computation}
\label{sec:polym-comp}

As reported in \cite[Section 6.4.4]{zbMATH07819241}, Ziqin Feng
computed the homology of $X^{6,4}$ with $\mathbb{Z}/2$-coefficients
using the Ripser software package \cite{zbMATH07421252}. We computed
the homology of $X^{6,4}$ with $\mathbb{Q}$-coefficients using
Polymake \cite{zbMATH01538120}. Our results show that
\begin{equation*}
  \widetilde{H}_i (X^{6,4}; \mathbb{Q}) \cong
  \begin{cases}
    \mathbb{Q}^{239}, &i=7,\\
    \mathbb{Q}^{14}, &i=15,\\
    0, &i\neq 7,15.\\
  \end{cases}
\end{equation*}
In particular, $X^{6,4}$ has the same Betti numbers over
$\mathbb{Z}/2$ and $\mathbb{Q}$. Our computation was performed on the
Viking Cluster at Cleveland State University; we estimate it took
about 18 hours and at least 800 GB of RAM. The code used for this
computation is available in our Github repository.


\end{document}